\date{}
\renewcommand{\uppercasenonmath}[1]{}
\numberwithin{equation}{section} \theoremstyle{plain}
\newtheorem{lem}{Lemma}[section]
\newtheorem{cor}[lem]{Corollary}
\newtheorem{prop}[lem]{Proposition}
\newtheorem{thm}[lem]{Theorem}
\newtheorem{definition}[lem]{Definition}
\newtheorem{Ex}[lem]{Example}
\newtheorem{Quest}[lem]{Question}
\newtheorem{Property}[lem]{Property}
\newtheorem{Properties}[lem]{Properties}
\newtheorem{Subprops}{}[lem]
\newtheorem{Para}[lem]{}
\newtheorem{rem}[lem]{Remark}
\newenvironment{df}{\begin{definition}\rm}{\end{definition}}
\newtheorem*{ack*}{ACKNOWLEDGEMENTS}
\newcommand{\pf}{\noindent\begin {proof}}
\newcommand{\epf}{\end{proof}}
\newcommand{\X}{\mathcal{X}}
\newcommand{\W}{\mathcal{W}}
\newcommand{\C}{\mathcal{C}}
\newcommand{\HH}{\mathcal{H}}
\newcommand{\E}{\mathbb{E}}
\newcommand{\oext}{\mbox{\rm Ext}}
\newcommand{\h}{{\rm H}}
\begin{document}
\begin{center}
{\large  \bf  Auslander-Buchweitz Approximation Theory for Extriangulated Categories}

\vspace{0.5cm}  Yajun Ma, Nanqing Ding and Yafeng Zhang%\footnote{Corresponding author. Jiangsheng Hu was supported by the NSF of China (Grants No. 11501257, 11671069, 11771212) and Qing Lan Project of Jiangsu Province. Panyue Zhou was supported by the Hunan Provincial Natural Science Foundation of China (Grant
%No. 2018JJ3205) and the NSF of China (Grants No. 11671221)}  \\
%\medskip

\hspace{-4mm}Department of Mathematics, Nanjing University, Nanjing 210093, China \\
E-mails: 13919042158@163.com, nqding@nju.edu.cn and 470985396@163.com\\
\end{center}

\bigskip
\centerline { \bf  Abstract}
\medskip

\leftskip10truemm \rightskip10truemm \noindent Extriangulated categories were introduced by Nakaoka and Palu as a simultaneous generalization of exact categories and triangulated categories. In this paper, we introduce and develop an analogous theory of Auslander-Buchweitz approximations for extriangulated categories. We establish the existence of precovers $($and preenvelopes$)$ and obtain characterizations of relative homological dimensions, which are based on certain subcategories under finiteness of resolutions. Finally, we give a description of cotorsion pairs on extriangulated categories under some conditions, and provide a characterization of silting subcategories on stable categories. \\[2mm]
{\bf Keywords:} Extriangulated category; Homological dimension; Cogenerator; Cotorsion pair.\\
%{\bf 2010 Mathematics Subject Classification:} 18E30; 18E10; 16E05; 18G20; 18G35.

\leftskip0truemm \rightskip0truemm
%\bigskip
\section { \bf Introduction}
%\bigskip

%Exact and triangulated categories are two fundamental structures in algebra and geometry. Note that these two kinds of categories have many different points: while modules or sheaves are forming abelian categories which are exact, complexes lead to homotopy or derived categories that are triangulated. But if we look carefully at the construction inside the categories, they share many similarities: while exact categories admit short exact sequences, triangulated categories admit triangles. The similarity between short exact sequences and triangles makes that it is possible to put these two notations into a unified form. This was carried out recently by Nakaoka and Palu \cite{NP}. They introduce the new notion of extriangulated categories.  The class of extriangulated categories contains exact categories and extension closed subcategories of triangulated categories as examples.

Originated from the concept of injective envelopes, the approximation theory has attracted increasing interest and , hence, obtained considerable development especially in the context of module categories (see, for example \cite{AR,EJ2}). Auslander and Buchweitz \cite{AB} studied the ideals of injective envelopes and projective covers in terms of maximal Cohen-Macaulay approximations for certain modules. Indeed, they established their theory in the context of abelian categories and provided important applications. Inspired by their work, Mendoza Hern$\acute{a}$ndez, S$\acute{a}$enz Valadez, Santiago Vargas and Souta Salorio developed in \cite{MSSS1, MSSS2} an analogous theory of approximations for triangulated categories.

Triangulated categories and exact categories are two fundamental structures in mathematics. They are also important tools in many mathematical branches. It is well known that these two kinds of categories have some similarities, there are even direct connections between them. By extracting the similarities between triangulated categories and exact categories, Nakaoka and palu \cite{NP} recently introduced the notion of extriangulated categories, whose extriangulated structures are given by $\E$-triangles with some axioms. Except triangulated categories and exact categories, there are many examples for extriangulated categories \cite{NP,ZZ}. Hence, many results known to hold on exact categories and triangulated categories can be unified in the same framework \cite{HZZ,LN,NP,ZZ}. Motivated by the ideal, we introduce and develop an analogous theory of approximations in the sense of Auslander and Buchweitz \cite{AB} for extriangulated categories. Let $\C$ be an extriangulated category with enough projectives and injectives. The main results deal with a pair ($\X, \W$) of subcategories of $\C$, where $\X$ is closed under extensions and $\W$ is an $\X$-injective cogenerator for $\X$. We consider the subcategory $\widehat{\X}$ of $\C$ consisting of all objects with a finite resolution by objects of $\X$. Moreover, a notion of $\X$-resolution dimensions is also introduced, which is compared with other relative homological dimensions. We prove that any object of $\widehat{\X}$ admits two $\E$-triangles: one giving rise to an $\X$-precover and the other to a $\widehat{\W}$-preenvelope, which is used to construct a cotorsion pair on the extriangulated category $\widehat{\X}$. Whenever $\C$ is a Frobenius extriangulated category, we give a characterization of hereditary cotorsion pairs $(\mathcal{U}, \mathcal{V})$ on the extriangulated category $\C$ with $\widehat{\mathcal{U}}=\widecheck{\mathcal{V}}=\C$. As a application, we also obtain a characterization of silting subcategories on the stable category $\underline{\C}$.

The paper is organized as follows. In Section 2, we recall the definition of an extriangulated categorry and outline some basic properties that will be used later. In Section 3, we study the notion of $\X$-resolution dimensions and give some relationships between the relative projective dimension and the $\X$-resolution dimension. Moreover, we focus our attention on the notion of $\X$-injective cogenerators for $\X$ and establish the existence of $\X$-precovers and $\widehat{\W}$-preenvelopes. If $\X$ is closed under extensions, we also obtain that $\widehat{\X}$ is closed under extensions, hence it is an extriangulated category, which is essential for our main result. In Section 4, we define hereditary cotorsion pairs on extriangulated categories with enough projectives and injectives. If $\C$ is a Frobenius extriangulated category, we establish a bijective correspondence between hereditary cotorsion pairs $(\mathcal{U}, \mathcal{V})$ on the extriangulated category $\C$ with $\widehat{\mathcal{U}}=\widecheck{\mathcal{V}}=\C$ and that of certain specially precovering classes. Finally, in Corollary \ref{corl},  we give a characterization of silting subcategories on the stable category $\underline{\C}$ if $\C$ is a Frobenius extriangulated category.

\section{\bf Preliminaries}
Throughout this paper, $\C$ denotes an additive category, by the term $``subcategory"$ we always mean a full additive subcategory of an additive category closed under isomorphisms and direct summands. We denote by ${\mathcal{\C}}(A, B)$ the set of morphisms from $A$ to $B$ in $\C$.

Let $\X$ and $\mathcal{Y}$ be two subcategories of $\C$, a morphism $f: X\rightarrow C$ in $\C$ is said to be an $\X$-precover of $C$ if $X\in\X$ and ${\C}(X', f): {\C}(X', X)\rightarrow {\C}(X', C)$ is surjective, $\forall X'\in\X$. If any $C\in \mathcal{Y}$ admits an $\X$-precover, then $\X$ is called a precovering class in $\mathcal{Y}$. By dualizing the definition above, we get the notion of an $\X$-preenvelope of $C$ and a preenveloping class in $\mathcal{Y}$, for details, see \cite{AR,EJ2}.

Let us briefly recall some definitions and basic properties of extriangulated categories from \cite{NP}.

 \begin{definition}\cite[Definition 2.1]{NP} {\rm Suppose that $\mathcal{C}$ is equipped with an additive bifunctor $$\mathbb{E}: \mathcal{C}^{op}\times \mathcal{C}\rightarrow {\rm Ab},$$  where ${\rm Ab}$ is the category of abelian groups. For any objects $A, C\in\mathcal{C}$, an element $\delta\in \mathbb{E}(C,A)$ is called an $\mathbb{E}$-extension. Thus formally, an $\mathbb{E}$-extension is a triple $(A,\delta, C)$. For any $A, C\in\mathcal{C}$, the zero element $0\in\mathbb{E}(C, A)$ is called the split $\mathbb{E}$-extension.

 Let $\delta\in \mathbb{E}(C, A)$ be any $\mathbb{E}$-extension. By the functoriality, for any $a\in\mathcal{C}(A, A')$ and $c\in\mathcal{C}(C', C)$, we have $\mathbb{E}$-extensions
 \begin{center}$\mathbb{E}(C, a)(\delta)\in\mathbb{E}(C, A')$ ~and~ $\mathbb{E}(c, A)(\delta)\in\mathbb{E}(C', A)$.\end{center}
 We abbreviately denote them by $a_*\delta$ and $c^*\delta$. In this terminology, we have $$\mathbb{E}(c, a)(\delta)=c^*a_*\delta=a_*c^*\delta$$ in $\mathbb{E}(C', A')$.}
 \end{definition}

 \begin{definition}\cite[Definition 2.3]{NP} {\rm Let $\delta\in\mathbb{E}(C, A)$ and $\delta'\in\mathbb{E}(C', A')$ be any pair of $\mathbb{E}$-extensions. A morphism $(a, c): \delta\rightarrow \delta'$ of $\mathbb{E}$-extensions is a pair of morphisms $a\in\mathcal{C}(A, A')$ and $c\in\mathcal{C}(C, C')$ in $\mathcal{C}$ satisfying the equality $$a_*\delta=c^*\delta'.$$
 We simply denote it as $(a, c): \delta\rightarrow \delta'$.}
 \end{definition}

 \begin{definition}\cite[Definition 2.6]{NP} {\rm Let $\delta=(A, \delta, C)$ and $\delta'=(A', \delta', C')$ be any pair of $\mathbb{E}$-extensions. Let
 \begin{center}$\xymatrix{C\ar[r]^{\iota_C\ \ \ }&C\oplus C'&C'\ar[l]_{\qquad\iota_{C'}}}$
 and
 $\xymatrix{A&A\oplus A'\ar[l]_{ p_A\;\;}\ar[r]^{\quad p_{A'} }&A'}$\end{center}
be coproduct and product in $\mathcal{C}$, respectively. Remark that, by the additivity of $\mathbb{E}$, we have a natural isomorphism
\begin{center} $\mathbb{E}(C\oplus C', A\oplus A')\simeq\mathbb{E}(C, A)\oplus\mathbb{E}(C, A')
\oplus\mathbb{E}(C', A)\oplus\mathbb{E}(C', A')$.\end{center}
Let $\delta\oplus \delta'\in\mathbb{E}(C\oplus C', A\oplus A')$ be the element corresponding to $(\delta, 0, 0, \delta')$ through this isomorphism. This is the unique element which satisfies
\begin{center}$\mathbb{E}(\iota_C, p_A)(\delta\oplus \delta')=\delta,\ \mathbb{E}(\iota_C, p_{A'})(\delta\oplus \delta')=0, \ \mathbb{E}(\iota_{C'}, p_A)(\delta\oplus \delta')=0,\ \mathbb{E}(\iota_{C'}, p_{A'})(\delta\oplus \delta')=\delta'$.\end{center}}
 \end{definition}

 \begin{definition}\cite[Definition 2.7]{NP} {\rm Let $A, C\in\mathcal{C}$ be any pair of objects. Two sequences of morphisms in $\mathcal{C}$
 \begin{center} $\xymatrix{A\ar[r]^x&B\ar[r]^y&C}$ and $\xymatrix{A\ar[r]^{x'}&B'\ar[r]^{y'}&C}$\end{center}
 are said to be equivalent if there exists an isomorphism $b\in\mathcal{C}(B, B')$ which makes the following diagram commutative
 $$\small\xymatrix@C=1.2em@R=0.5cm{&&B\ar[dd]^b_\simeq\ar[drr]^y&&\\
 A\ar[urr]^x\ar[drr]_{x'}&&&&C\\
 &&B'\ar[urr]_{y'}}$$
 We denote the equivalence class of $\xymatrix{A\ar[r]^x&B\ar[r]^y&C}$ by $\xymatrix{[A\ar[r]^x&B\ar[r]^y&C]}$.}
 \end{definition}

 \begin{definition}\cite[Definition 2.8]{NP} {\rm
 \emph{(1)} For any $A, C\in\mathcal{C}$, we denote
 $$0=[A\xrightarrow{~\tiny\begin{bmatrix}1\\0\end{bmatrix}~}A\oplus C\xrightarrow{\tiny\begin{bmatrix}0&1\end{bmatrix}}C]$$

\emph{ (2)} For any two classes $\xymatrix{[A\ar[r]^x&B\ar[r]^y&C]}$ and $\xymatrix{[A'\ar[r]^{x'}&B'\ar[r]^{y'}&C']}$, we denote
\begin{center} $\xymatrix{[A\ar[r]^x&B\ar[r]^y&C]}\oplus$$\xymatrix{[A'\ar[r]^{x'}&B'\ar[r]^{y'}&C']}=$$\xymatrix{[A\oplus A'\ar[r]^{x\oplus x'}&B\oplus B'\ar[r]^{y\oplus y'}&C\oplus C']}$.\end{center}}
 \end{definition}

 \begin{definition}\cite[Definition 2.9]{NP} {\rm
  Let $\mathfrak{s}$ be a correspondence which associates an equivalence class $$\mathfrak{s}(\delta)=\xymatrix@C=0.8cm{[A\ar[r]^x
 &B\ar[r]^y&C]}$$ to any $\mathbb{E}$-extension $\delta\in\mathbb{E}(C, A)$. This $\mathfrak{s}$ is called a {\it realization} of $\mathbb{E}$, if it satisfies
 the following condition $(\star)$. In this case, we say that the sequence $\xymatrix{A\ar[r]^x&B\ar[r]^y&C}$ realizes $\delta$, whenever it satisfies $\mathfrak{s}(\delta)=$$\xymatrix{[A\ar[r]^x &B\ar[r]^y&C]}$.

 $(\star)$ Let $\delta\in\mathbb{E}(C, A)$ and $\delta'\in\mathbb{E}(C', A')$ be any pair of $\mathbb{E}$-extensions, with
 \begin{center} $\mathfrak{s}(\delta)=$$\xymatrix{[A\ar[r]^x &B\ar[r]^y&C]}$ and $\mathfrak{s}(\delta')=$$\xymatrix{[A'\ar[r]^{x'} &B'\ar[r]^{y'}&C']}$.\end{center} Then, for any morphism $(a, c): \delta\rightarrow \delta'$, there exists $b\in\mathcal{C}(B, B')$ which makes the following diagram commutative
 $$\xymatrix{A\ar[r]^{x}\ar[d]_{a}&B\ar[r]^{y}\ar[d]_{b}&C\ar[d]_{c}\\
 A'\ar[r]^{x'}&B'\ar[r]^{y'}&C'.}$$
 In the above situation, we say that the triplet $(a, b, c)$ realizes $(a, c)$.}
 \end{definition}

 \begin{definition} \cite[Definition 2.10]{NP} {\rm
 Let $\mathcal{C}, \mathbb{E}$ be as above. A realization of $\mathbb{E}$ is said to be {\it additive}, if it satisfies the following conditions.

 \emph{(i)} For any $A, C\in\mathcal{C}$, the split $\mathbb{E}$-extension $0\in\mathbb{E}(C, A)$ satisfies $\mathfrak{s}(0)=0.$

\emph{(ii)} For any pair of $\mathbb{E}$-extensions $\delta\in\mathbb{E}(C, A)$ and $\delta'\in\mathbb{E}(C', A')$, we have \begin{center}$\mathfrak{s}(\delta\oplus\delta')=\mathfrak{s}(\delta)\oplus\mathfrak{s}(\delta')$.\end{center}}
 \end{definition}

 \begin{definition} \cite[Definition 2.12]{NP} {\rm A triplet $(\mathcal{C}, \mathbb{E}, \mathfrak{s})$ is called an {\it extriangulated category} if it satisfies the following conditions.

 \emph{(ET1)} $\mathbb{E}: \mathcal{C}^{op}\times \mathcal{C}\rightarrow \rm{Ab}$ is an additive bifunctor.

 \emph{(ET2)} $\mathfrak{s}$ is an additive realization of $\mathbb{E}$.

\emph{(ET3)} Let $\delta\in\mathbb{E}(C, A)$ and $\delta'\in\mathbb{E}(C', A')$ be any pair of $\mathbb{E}$-extensions, realized as
 \begin{center} $\mathfrak{s}(\delta)=$$\xymatrix{[A\ar[r]^x &B\ar[r]^y&C]}$ and $\mathfrak{s}(\delta')=$$\xymatrix{[A'\ar[r]^{x'} &B'\ar[r]^{y'}&C']}$.\end{center} For any commutative square
 $$\xymatrix{A\ar[r]^{x}\ar[d]_{a}&B\ar[r]^{y}\ar[d]_{b}&C\\
 A'\ar[r]^{x'}&B'\ar[r]^{y'}&C'.}$$
 in $\mathcal{C}$, there exists a  morphism $(a, c): \delta\rightarrow \delta'$ which is realized by (a, b, c).

 \emph{(ET3)}$^{\rm op}$ Let $\delta\in\mathbb{E}(C, A)$ and $\delta'\in\mathbb{E}(C', A')$ be any pair of $\mathbb{E}$-extensions, realized by
 \begin{center} $\xymatrix{
   A \ar[r]^{x} & B \ar[r]^{y} & C}$ and $\xymatrix{
     A' \ar[r]^{x'} & B'\ar[r]^{y'} & C'  }$\end{center}
 respectively. For any commutative square
 $$\xymatrix{
   A \ar[r]^{x} & B \ar[d]_{b} \ar[r]^{y} & C \ar[d]^{c} \\
   A' \ar[r]^{x'} & B' \ar[r]^{y'} & C'  }$$
   in $\mathcal{C}$, there exists a  morphism $(a, c): \delta\rightarrow \delta'$ which is realized by (a, b, c).

\emph{(ET4)} Let $\delta\in\mathbb{E}(D,A)$ and $\delta'\in\mathbb{E}(F, B)$ be $\mathbb{E}$-extensions realized by
 \begin{center} $\xymatrix{A\ar[r]^f&B\ar[r]^{f'}&D}$ and $\xymatrix{B\ar[r]^g&C\ar[r]^{g'}&F}$\end{center}
 respectively. Then there exist an object $E\in\mathcal{C}$, a commutative diagram
 $$\xymatrix{A\ar[r]^f\ar@{=}[d]&B\ar[r]^{f'}\ar[d]_g&D\ar[d]^d\\
A\ar[r]^h&C\ar[r]^{h'}\ar[d]_{g'}&E\ar[d]^e\\
&F\ar@{=}[r]&F
}$$
in $\mathcal{C}$, and an $\mathbb{E}$-extension $\delta^{''}\in\mathbb{E}(E, A)$ realized by $\xymatrix{A\ar[r]^h&C\ar[r]^{h'}&E}$,
which satisfy the following compatibilities.

\emph{(i)} $\xymatrix{D\ar[r]^d&E\ar[r]^{e}&F}$ realizes $f'_*\delta'$,

\emph{(ii)} $d^*\delta^{''}=\delta$,

\emph{(iii)} $f_*\delta^{''}=e^*\delta'$.

\emph{(ET4)}$^{\rm op}$ Let $\delta\in\E(B, D)$ and $\delta'\in\E(C, F)$ be $\mathbb{E}$-extensions realized by
 \begin{center} $\xymatrix{D\ar[r]^{f'}&A\ar[r]^{f}&B}$ and $\xymatrix{F\ar[r]^{g'}&B\ar[r]^{g}&C}$\end{center}
 respectively. Then there exist an object $E\in\mathcal{C}$, a commutative diagram
 $$\xymatrix{D\ar[r]^d\ar@{=}[d]&E\ar[r]^{e}\ar[d]_{h'}&F\ar[d]^{g'}\\
D\ar[r]^{f'}&A\ar[r]^{f}\ar[d]_{h}&B\ar[d]^g\\
&C\ar@{=}[r]&C
}$$
in $\mathcal{C}$, and an $\mathbb{E}$-extension $\delta^{''}\in\mathbb{E}(C, E)$ realized by $\xymatrix{E\ar[r]^{h'}&A\ar[r]^h&C}$,
which satisfy the following compatibilities.

\emph{(i)} $\xymatrix{D\ar[r]^d&E\ar[r]^{e}&F}$ realizes $g'^*\delta$,

\emph{(ii)} $\delta'=e_*\delta^{''}$,

\emph{(iii)} $d_*\delta=g^*\delta^{''}$.
} \end{definition}

For an extriangulated category $\mathcal{C}$, we use the following notation:

$\bullet$ A sequence $\xymatrix{
    A \ar[r]^{x} & B \ar[r]^{y} & C   }$ is called a conflation if it realizes some $\mathbb{E}$-extension $\delta\in\mathbb{E}(C, A)$.

$\bullet$ A morphism $f\in\mathcal{C}(A, B)$ is called an inflation if it admits some conflation $\xymatrix{
    A \ar[r]^{f} & B \ar[r] & C }$.

$\bullet$ A morphism $f\in\mathcal{C}(A, B)$ is called a deflation if it admits some conflation $\xymatrix{
    K \ar[r]& A \ar[r]^{f} & B   }$.

$\bullet$ If a conflation $\xymatrix{A \ar[r]^{x} & B \ar[r]^{y} & C   }$ realizes $\delta\in\mathbb{E}(C, A)$, we call the pair $(\xymatrix{A\ar[r]^{x} & B \ar[r]^{y} & C   }, \delta)$ an $\mathbb{E}$-triangle, and write it in the following way.
$$\xymatrix{A \ar[r]^{x} & B \ar[r]^{y} & C \ar@{-->}[r]^{\delta}&  }$$
We usually do not write this ``$\delta$" if it is not used in the argument.

$\bullet$ Given an $\mathbb{E}$-triangle $\xymatrix{A \ar[r]^{x} & B \ar[r]^{y} & C \ar@{-->}[r]^{\delta}&  }$, we call $A$ the CoCone of $y:\xymatrix{B \ar[r]& C}$, and denote it by $CoCone(\xymatrix{B \ar[r]& C})$, or $CoCone(y)$; we call $C$ the Cone of $x:\xymatrix{A \ar[r]& B}$, and denote it by $Cone(\xymatrix{A \ar[r]& B})$, or $Cone(x)$.

$\bullet$ Let$\xymatrix{A\ar[r]^{x}&B\ar[r]^{y}&C\ar@{-->}[r]^{\delta}&}$ and $\xymatrix{A'\ar[r]^{x'}&B'\ar[r]^{y'}&C'\ar@{-->}[r]^{\delta'}&}$ be any pair of $\mathbb{E}$-triangles. If a triplet $(a, b, c)$ realizes $(a, c): \delta\rightarrow\delta'$, then we write it as
$$\xymatrix{A\ar[r]^{x}\ar[d]_{a}&B\ar[r]^{y}\ar[d]_{b}&C\ar[d]_{c}\ar@{-->}[r]^{\delta}&\\
 A'\ar[r]^{x'}&B'\ar[r]^{y'}&C'\ar@{-->}[r]^{\delta'}&}$$
and call $(a, b, c)$ a morphism of $\mathbb{E}$-triangles.

$\bullet$ A subcategory $\mathcal{T}$ of $\mathcal{C}$ is called extension-closed if $\mathcal{T}$ is closed under extensions, i.e. for any $\mathbb{E}$-triangle $\xymatrix{A\ar[r]^{x}&B\ar[r]^{y}&C\ar@{-->}[r]^{\delta}&}$ with $A,C\in\mathcal{T}$, we have $B\in\mathcal{T}$.

\begin{Ex} {\rm(1) Exact category $\mathcal{B}$ can be viewed as an extriangulated category. For the definition and basic properties of an exact category, see \cite{B}. In fact, a biadditive functor $\mathbb{E}:={\oext^{1}_{\mathcal{B}} :\mathcal{B}^{\rm op}\times\mathcal{B}\rightarrow Ab.}$ Let $A, C\in\mathcal{B}$ be any pair of objects. Define $\oext^{1}_{\mathcal{B}} (C, A)$ to be the collection of all equivalence classes of short exact sequences of the form $\xymatrix{A\ar[r]^{x}&B\ar[r]^{y}&C}$. We denote the equivalence class by $[\xymatrix{A\ar[r]^{x}&B\ar[r]^{y}&C}]$ as before. For any $\delta=[\xymatrix{A\ar[r]^{x}&B\ar[r]^{y}&C}]\in {\rm Ext}^{1}_{\mathcal{B}} (C, A)$, define the realization $\mathfrak{s}(\delta)$ of $[\xymatrix{A\ar[r]^{x}&B\ar[r]^{y}&C}]$ to be $\delta$ itself. For more detils, see \cite[Example 2.13]{NP}.

(2) Let $\mathcal{C}$ be an triangulated category with shift functor [1]. Put $\mathbb{E}:=\mathcal{C}(-,-[1])$. For any $\delta\in\mathbb{E}(C, A)=\mathcal{C}(C, A[1])$, take a triangle
$$\xymatrix{A\ar[r]^{x} & B \ar[r]^{y} & C\ar[r]^{\delta} & A[1]}$$
and define as $\mathfrak{s}(\delta)=[\xymatrix{A\ar[r]^{x} & B \ar[r]^{y} & C}]$. Then $(\mathcal{C},\mathbb{E},\mathfrak{s})$ is an extriangulated category. It is easy to see that extension closed subcategories of triangulated categories are also extriangulated categories. For more details, see \cite[Proposition 3.22]{NP}.

(3) Let $\mathcal{C}$ be an extriangulated category, and $\mathcal{J}$ a subcategory of $\mathcal{C}$. If $\mathcal{J}\subseteq Proj(\mathcal{C})\bigcap Inj(\mathcal{C})$, where $Proj(\mathcal{C})$ is the full category of projective objects in $\mathcal{C}$ and $Inj(\mathcal{C})$ is the full category of injective objects in $\mathcal{C}$, then $\mathcal{C}/\mathcal{J}$ is an extriangulated category. This construction gives extriangulated categories which are neither exact nor triangulated in general. For more details, see \cite[ Proposition 3.30]{NP}
 }\end{Ex}

\begin{lem}\label{lem1} \emph{\cite[Proposition 3.15]{NP}} Let $(\mathcal{C}, \mathbb{E},\mathfrak{s})$ be an extriangulated category. Then the following hold.

\emph{(1)} Let $C$ be any object, and let $\xymatrix@C=2em{A_1\ar[r]^{x_1}&B_1\ar[r]^{y_1}&C\ar@{-->}[r]^{\delta_1}&}$ and $\xymatrix@C=2em{A_2\ar[r]^{x_2}&B_2\ar[r]^{y_2}&C\ar@{-->}[r]^{\delta_2}&}$ be any pair of $\mathbb{E}$-triangles. Then there is a commutative diagram
in $\mathcal{C}$
$$\xymatrix{
    & A_2\ar[d]_{m_2} \ar@{=}[r] & A_2 \ar[d]^{x_2} \\
  A_1 \ar@{=}[d] \ar[r]^{m_1} & M \ar[d]_{e_2} \ar[r]^{e_1} & B_2\ar[d]^{y_2} \\
  A_1 \ar[r]^{x_1} & B_1\ar[r]^{y_1} & C   }
  $$
  which satisfies $\mathfrak{s}(y^*_2\delta_1)=\xymatrix@C=2em{[A_1\ar[r]^{m_1}&M\ar[r]^{e_1}&B_2]}$ and
  $\mathfrak{s}(y^*_1\delta_2)=\xymatrix@C=2em{[A_2\ar[r]^{m_2}&M\ar[r]^{e_2}&B_1]}$.
%  $$m_{1*}\delta_1+m_{2*}\delta_2=0$$

  \emph{(2)} Let $A$ be any object, and let $\xymatrix@C=2em{A\ar[r]^{x_1}&B_1\ar[r]^{y_1}&C_1\ar@{-->}[r]^{\delta_1}&}$ and $\xymatrix@C=2em{A\ar[r]^{x_2}&B_2\ar[r]^{y_2}&C_2\ar@{-->}[r]^{\delta_2}&}$ be any pair of $\mathbb{E}$-triangles. Then there is a commutative diagram
in $\mathcal{C}$
$$\xymatrix{
     A\ar[d]_{x_2} \ar[r]^{x_1} & B_1 \ar[d]^{m_2}\ar[r]^{y_1}&C_1\ar@{=}[d] \\
  B_2 \ar[d]_{y_2} \ar[r]^{m_1} & M \ar[d]^{e_2} \ar[r]^{e_1} & C_1 \\
  C_2 \ar@{=}[r] & C_2 &   }
  $$
  which satisfies $\mathfrak{s}(x_{2*}\delta_1)=\xymatrix@C=2em{[B_2\ar[r]^{m_1}&M\ar[r]^{e_1}&C_1]}$ and $\mathfrak{s}(x_{1*}\delta_2)=\xymatrix@C=2em{[B_1\ar[r]^{m_2}&M\ar[r]^{e_2}&C_2]}$.
%  $$m_{1*}\delta_1+m_{2*}\delta_2=0$$
\end{lem}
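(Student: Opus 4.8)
The plan is to establish Part (1) by first producing the object $M$ together with the middle row, then recovering the top row and the middle column; Part (2) will follow by a dual argument. Throughout I will use three elementary facts: consecutive morphisms in a conflation compose to zero, deflations are epimorphisms, and $\mathbb{E}$ is additive, so that $0^{*}=0$ and $(yx)^{*}=x^{*}y^{*}$. First I would form the $\mathbb{E}$-extension $y_{2}^{*}\delta_{1}\in\mathbb{E}(B_{2},A_{1})$ and, using that $\mathfrak{s}$ is an additive realization (ET2), realize it by a conflation $A_{1}\xrightarrow{m_{1}}M\xrightarrow{e_{1}}B_{2}$; this is the definition of $M$. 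Since $(\mathrm{id}_{A_{1}})_{*}(y_{2}^{*}\delta_{1})=y_{2}^{*}\delta_{1}=y_{2}^{*}((\mathrm{id}_{A_{1}})_{*}\delta_{1})$, the pair $(\mathrm{id}_{A_{1}},y_{2})$ is a morphism $y_{2}^{*}\delta_{1}\to\delta_{1}$ of $\mathbb{E}$-extensions, so the realization axiom $(\star)$ supplies $e_{2}\colon M\to B_{1}$ making the lower two rows commute, i.e. $e_{2}m_{1}=x_{1}$ and $y_{1}e_{2}=y_{2}e_{1}$. This already yields the left column $A_{1}=A_{1}$ together with the lower-middle square.

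Next I would feed the composable deflations $M\xrightarrow{e_{1}}B_{2}\xrightarrow{y_{2}}C$ into (ET4)$^{\mathrm{op}}$. This returns the CoCone $E$ of $y_{2}e_{1}$, a conflation $E\xrightarrow{h'}M\xrightarrow{h}C$ realizing some $\delta''\in\mathbb{E}(C,E)$ with $h=y_{2}e_{1}=y_{1}e_{2}$, a conflation $A_{1}\xrightarrow{d}E\xrightarrow{e}A_{2}$ realizing $x_{2}^{*}(y_{2}^{*}\delta_{1})=(y_{2}x_{2})^{*}\delta_{1}$, together with the compatibilities $\delta_{2}=e_{*}\delta''$ and $d_{*}(y_{2}^{*}\delta_{1})=y_{2}^{*}\delta''$. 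Because $y_{2}x_{2}=0$ we get $(y_{2}x_{2})^{*}\delta_{1}=0$, so $A_{1}\xrightarrow{d}E\xrightarrow{e}A_{2}$ splits; identifying $E\cong A_{1}\oplus A_{2}$ with $d=\iota_{A_{1}}$ and $e$ the projection, and setting $m_{2}:=h'\iota_{A_{2}}$, the commutativity $e_{1}h'=x_{2}e$ forces $e_{1}m_{2}=x_{2}$, which is exactly the top square of the diagram.

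It then remains to check that $A_{2}\xrightarrow{m_{2}}M\xrightarrow{e_{2}}B_{1}$ is itself a conflation realizing $y_{1}^{*}\delta_{2}$, and this is the main obstacle. The idea is to use $\delta_{2}=e_{*}\delta''$ and $d_{*}(y_{2}^{*}\delta_{1})=y_{2}^{*}\delta''$ to decompose $\delta''$ along $E\cong A_{1}\oplus A_{2}$ and then to cancel the split summand $A_{1}$ contributed by the conflation $A_{1}\xrightarrow{x_{1}}B_{1}\xrightarrow{y_{1}}C$, extracting from $E\xrightarrow{h'}M\xrightarrow{h}C$ the shorter conflation whose CoCone is $A_{2}$ and whose deflation is $e_{2}$. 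This cancellation, turning the combined CoCone $A_{1}\oplus A_{2}$ of $M\to C$ into the CoCone $A_{2}$ of $e_{2}$, is the only substantive difficulty; everything else is formal. Once $A_{2}\xrightarrow{m_{2}}M\xrightarrow{e_{2}}B_{1}$ is known to be a conflation, its realized extension is pinned down: applying (ET3) to the commutative square determined by $(\mathrm{id}_{A_{2}},e_{1})$ gives a morphism onto the triangle of $\delta_{2}$, and since the deflation $e_{2}$ is epic the comparison map must be $y_{1}$, whence the realized extension equals $y_{1}^{*}\delta_{2}$.

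Finally, Part (2) is the formal dual of Part (1): passing to $\mathcal{C}^{\mathrm{op}}$, which is again extriangulated with $\mathbb{E}^{\mathrm{op}}(A,C)=\mathbb{E}(C,A)$, one realizes $x_{2*}\delta_{1}$ by a conflation $B_{2}\xrightarrow{m_{1}}M\xrightarrow{e_{1}}C_{1}$, applies $(\star)$ to the morphism $(x_{2},\mathrm{id}_{C_{1}})\colon\delta_{1}\to x_{2*}\delta_{1}$ to obtain $m_{2}\colon B_{1}\to M$, and then feeds the composable inflations $A\xrightarrow{x_{2}}B_{2}\xrightarrow{m_{1}}M$ into (ET4) in place of (ET4)$^{\mathrm{op}}$. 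The same cancellation argument then yields that $B_{1}\xrightarrow{m_{2}}M\xrightarrow{e_{2}}C_{2}$ is a conflation realizing $x_{1*}\delta_{2}$, and I expect this dual verification to be no harder than the one in Part (1).
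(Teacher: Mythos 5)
The paper offers no proof of this lemma: it is imported verbatim from \cite[Proposition 3.15]{NP}, so there is nothing internal to compare your argument against, and it has to stand on its own. Your construction of $M$, $m_1$, $e_1$, $e_2$ via the realization of $y_2^*\delta_1$ and the axiom $(\star)$, and your application of (ET4)$^{\rm op}$ to $M\xrightarrow{e_1}B_2\xrightarrow{y_2}C$ yielding $E\cong A_1\oplus A_2$ and $m_2:=h'\iota_{A_2}$ with $e_1m_2=x_2$, are all correct. But the proof is genuinely incomplete at the step you yourself flag: showing that $A_2\xrightarrow{m_2}M\xrightarrow{e_2}B_1$ is a conflation is the entire content of the lemma, and ``cancelling the split summand $A_1$'' is not a formal operation in an extriangulated category. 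Your description of the cancellation is also not of the right shape: in passing from $A_1\oplus A_2\xrightarrow{h'}M\xrightarrow{h}C$ to $A_2\xrightarrow{m_2}M\xrightarrow{e_2}B_1$ the Cone changes from $C$ to $B_1$ as well, so one must simultaneously cut $A_1$ out of the CoCone and lift the Cone along $y_1$. If one tries to do this by applying (ET4) to the composable inflations $A_2\xrightarrow{\iota_{A_2}}A_1\oplus A_2\xrightarrow{h'}M$, one obtains a conflation $A_2\xrightarrow{m_2}M\rightarrow N$ together with a conflation $A_1\rightarrow N\rightarrow C$ realizing $(p_{A_1})_*\delta''$; identifying $N$ with $B_1$ then requires $(p_{A_1})_*\delta''=\delta_1$, whereas the compatibilities you have recorded only give $y_2^*\bigl((p_{A_1})_*\delta''\bigr)=y_2^*\delta_1$, which by the long exact sequence of Lemma \ref{lem2} pins down $(p_{A_1})_*\delta''$ only modulo the image of $\delta_2^\sharp$. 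So the gap is real and is not closed by the data you have assembled.

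There is in addition a false ``elementary fact'' in your final step: deflations are \emph{not} epimorphisms in a general extriangulated category. In the triangulated case every morphism is a deflation (complete $y\colon B\rightarrow C$ to a triangle), so for instance $0\rightarrow X$ is a deflation that is not epic. Consequently the argument ``since the deflation $e_2$ is epic the comparison map must be $y_1$'' does not work: from $(c-y_1)e_2=0$ you cannot conclude $c=y_1$, nor even $c^*\delta_2=y_1^*\delta_2$ without further input. The realized class of the middle column has to be identified differently, for example from the compatibility $\delta_2=e_*\delta''$ together with the decomposition $\mathbb{E}(C,A_1\oplus A_2)\cong\mathbb{E}(C,A_1)\oplus\mathbb{E}(C,A_2)$, or by routing the whole proof through (the dual of) Lemma \ref{lem3}, which directly produces the auxiliary conflation with middle term $B_1\oplus B_2$ from which both assertions of the lemma can be extracted. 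The same two defects propagate to your Part (2), since it is declared to be the formal dual of Part (1).
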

Assume that $(\mathcal{C}, \mathbb{E}, \mathfrak{s})$ is an extriangulated category. By Yoneda's Lemma, any $\mathbb{E}$-extension $\delta\in \mathbb{E}(C, A)$ induces  natural transformations
\begin{center} $\delta_\sharp: \mathcal{C}(-, C)\Rightarrow \mathbb{E}(-, A)$ and $\delta^\sharp: \mathcal{C}(A, -)\Rightarrow \mathbb{E}(C, -)$.\end{center}
For any $X\in\mathcal{C}$, these $(\delta_\sharp)_X$ and $\delta^\sharp_X$ are given as follows:

(1) $(\delta_\sharp)_X: \mathcal{C}(X, C)\Rightarrow \mathbb{E}(X, A); ~f\mapsto f^*\delta.$

(2) $\delta^\sharp_X: \mathcal{C}(A, X)\Rightarrow \mathbb{E}(C, X); ~g\mapsto g_*\delta.$
\begin{lem}\label{lem2} {\rm \cite[Corollary 3.12]{NP}}  Let $(\mathcal{C}, \mathbb{E}, \mathfrak{s})$ be an extriangulated category, and $$\xymatrix@C=2em{A\ar[r]^{x}&B\ar[r]^{y}&C\ar@{-->}[r]^{\delta}&}$$ an $\mathbb{E}$-triangle. Then we have the following long exact sequences:

$\xymatrix@C=1cm{\mathcal{C}(C, -)\ar[r]^{\mathcal{C}(y, -)}&\mathcal{C}(B, -)\ar[r]^{\mathcal{C}(x, -)}&\mathcal{C}(A, -)\ar[r]^{\delta^\sharp}&\mathbb{E}(C, -)\ar[r]^{\mathbb{E}(y, -)}&\mathbb{E}(B, -)\ar[r]^{\mathbb{E}(x, -)}&\mathbb{E}(A, -)};$

$\xymatrix@C=1cm{\mathcal{C}(-, A)\ar[r]^{\mathcal{C}(-, x)}&\mathcal{C}(-, B)\ar[r]^{\mathcal{C}(-, y)}&\mathcal{C}(-, C)\ar[r]^{\delta_\sharp}&\mathbb{E}(-, A)\ar[r]^{\mathbb{E}(-, x)}&\mathbb{E}(-, B)\ar[r]^{\mathbb{E}(-, y)}&\mathbb{E}(-, C)}.$

\end{lem}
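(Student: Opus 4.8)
The two displayed sequences are interchanged by passing to the opposite category: since $(\C^{\mathrm{op}},\mathbb{E}^{\mathrm{op}},\mathfrak{s}^{\mathrm{op}})$ is again extriangulated \cite{NP} and the $\mathbb{E}$-triangle $A\xrightarrow{x}B\xrightarrow{y}C\,\cdots\!\!\succ\delta$ becomes $C\xrightarrow{y}B\xrightarrow{x}A\,\cdots\!\!\succ\delta$ there, the first long exact sequence for $\delta$ is exactly the second one computed inside $\C^{\mathrm{op}}$. Hence the plan is to prove only the second sequence and obtain the first for free. Throughout I fix a test object $X$ and freely use the elementary identities $yx=0$, $x_{*}\delta=0$ and $y^{*}\delta=0$, which are immediate consequences of the axioms; they show at once that every composite of two consecutive maps vanishes, so only the inclusions $\ker\subseteq\operatorname{im}$ remain to be checked at the four middle terms.

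At the Hom-level the content is the weak kernel property of $x$. For exactness at $\C(X,B)$, given $h\colon X\to B$ with $yh=0$, I would apply \emph{(ET3)}$^{\mathrm{op}}$ to the split $\mathbb{E}$-triangle $X\xrightarrow{1_{X}}X\to 0$ and to $\delta$, using the vertical pair $(h,0)$; the axiom returns $a\colon X\to A$ with $xa=h$, so $h\in\operatorname{im}\C(X,x)$. For exactness at $\C(X,C)$, given $g\colon X\to C$ with $(\delta_{\sharp})_{X}(g)=g^{*}\delta=0$, I realize the morphism of $\mathbb{E}$-extensions $(1_{A},g)\colon g^{*}\delta\to\delta$ by a commutative diagram whose top row $A\to E\xrightarrow{y'}X$ realizes $g^{*}\delta$; since $g^{*}\delta=0$ this row splits, and a section $s$ of $y'$ composed with the middle vertical $E\to B$ produces a morphism $X\to B$ lifting $g$ through $y$, so $g\in\operatorname{im}\C(X,y)$.

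The heart of the argument is exactness at the two $\mathbb{E}$-terms, where I would feed the vanishing hypothesis into the strong axioms. For exactness at $\mathbb{E}(X,A)$, take $\theta\in\mathbb{E}(X,A)$ with $x_{*}\theta=0$, realized by $A\xrightarrow{p}L\xrightarrow{q}X$. Applying Lemma \ref{lem1}(2) to $\delta$ and $\theta$ (which share the first term $A$) yields an object $M$ and a $3\times 3$ diagram in which the middle column $B\xrightarrow{m_{2}}M\xrightarrow{e_{2}}X$ realizes $x_{*}\theta=0$ and the upper-left square gives $m_{1}p=m_{2}x$; a retraction $r$ of $m_{2}$ then produces $b:=rm_{1}\colon L\to B$ with $bp=rm_{2}x=x$, and \emph{(ET3)} applied to this square furnishes $c\colon X\to C$ with $c^{*}\delta=\theta$, i.e. $\theta\in\operatorname{im}\delta_{\sharp}$. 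For exactness at $\mathbb{E}(X,B)$ — the genuinely new step beyond the classical five-term sequence — take $\theta\in\mathbb{E}(X,B)$ with $y_{*}\theta=0$ and apply \emph{(ET4)} to $\delta$ and a realization $B\to N\to X$ of $\theta$, which are composable at $B$. This produces an object $E$, an $\mathbb{E}$-extension $\delta''\in\mathbb{E}(E,A)$, and a cone $\mathbb{E}$-triangle $C\xrightarrow{d}E\xrightarrow{e}X$ realizing $y_{*}\theta$, subject to the compatibilities $d^{*}\delta''=\delta$ and $x_{*}\delta''=e^{*}\theta$. Since $y_{*}\theta=0$ the cone triangle splits; choosing a section $s$ of $e$ and setting $\omega:=s^{*}\delta''\in\mathbb{E}(X,A)$ gives $x_{*}\omega=x_{*}s^{*}\delta''=s^{*}x_{*}\delta''=s^{*}e^{*}\theta=(es)^{*}\theta=\theta$, so $\theta\in\operatorname{im}\mathbb{E}(X,x)$.

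The main obstacle is precisely these last two steps. Unlike the Hom-level exactness, which is the routine weak kernel/cokernel property, the $\mathbb{E}$-level exactness requires \emph{reconstructing} an extension from a vanishing condition, and it is only the additive-realization compatibilities of \emph{(ET4)} (together with the pushout/pullback diagrams of Lemma \ref{lem1}) that convert the splitting induced by $y_{*}\theta=0$, respectively $x_{*}\theta=0$, into the desired preimage. Keeping track of which of the three compatibility identities of \emph{(ET4)} is invoked, and matching the splitting data to them, is the delicate bookkeeping; everything else is formal, and the first sequence then follows by the opening duality remark.
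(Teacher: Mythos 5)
This lemma is not proved in the paper at all --- it is quoted verbatim from \cite[Corollary 3.12]{NP} --- and your argument is a correct reconstruction of Nakaoka--Palu's original proof: the same tools in the same roles ((ET3)$^{\rm op}$ against a split $\E$-triangle for exactness at $\C(X,B)$, the realization axiom $(\star)$ at $\C(X,C)$, Lemma \ref{lem1} together with (ET3) at $\E(X,A)$, and (ET4) with its compatibility (iii) at $\E(X,B)$), plus the standard reduction of the contravariant sequence to the covariant one by passing to $\C^{\rm op}$. The only cosmetic point is that $yx=0$, $x_*\delta=0$, $y^*\delta=0$ are not literally axioms but the short consequences established in \cite[Proposition 3.3]{NP}; with that reference your proof is complete.
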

The following lemma was given in \cite[Proposition 1.20]{LN}, which is another version of \cite[Corollary 3.16]{NP}.
\begin{lem}\label{lem3}{\rm \cite[Proposition 1.20]{LN}}
Let $\xymatrix{A\ar[r]^{x}&B\ar[r]^{y}&C\ar@{-->}[r]^{\delta}&}$ be an $\mathbb{E}$-triangle and $f: A\rightarrow B$ any morphism, and let $\xymatrix{D\ar[r]^{d}&E\ar[r]^{e}&C\ar@{-->}[r]^{f_{*}\delta}&}$ be any $\mathbb{E}$-triangle realizing $f_{*}\delta$. Then there is a morphism $g$ which gives a morphisms of $\mathbb{E}$-triangles$$\xymatrix@C=2em{A\ar[r]^x\ar[d]^f&B\ar[r]^y\ar[d]^g&C\ar@{-->}[r]^{\delta}\ar@{=}[d]&\\
  D\ar[r]^d&E\ar[r]^e&C\ar@{-->}[r]^{f_{*}(\delta)}&
  }$$
  Moreover, $\xymatrix@C=1,2cm{A\ar[r]^{\tiny\begin{bmatrix}-f\\x\end{bmatrix}\ \ \ }&D\oplus B\ar[r]^{\tiny\ \ \begin{bmatrix}d&g\end{bmatrix}}&E\ar@{-->}[r]^{e^*\delta}&}$ is an $\mathbb{E}$-triangle.
\end{lem}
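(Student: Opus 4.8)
The plan is to obtain the morphism $g$ from the realization axiom and to produce the ``Moreover'' $\mathbb{E}$-triangle as the output of the octahedral axiom (ET4) applied to a factorization of the map $\binom{-f}{x}$.

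For the first assertion, note that $f_*\delta=1_C^*(f_*\delta)$, so $(f,1_C)\colon\delta\to f_*\delta$ is a morphism of $\mathbb{E}$-extensions. Applying the realization condition $(\star)$ (equivalently (ET3)) to it yields $g\colon B\to E$ with $gx=df$ and $eg=y$, which is exactly the claimed morphism of $\mathbb{E}$-triangles.

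For the ``Moreover'' part I would not check directly that $\binom{-f}{x}$ is an inflation, but instead write it as a composite of two inflations $p\colon A\to D\oplus A$ and $q\colon D\oplus A\to D\oplus B$, where $p=\binom{-f}{1}$ and $q=1_D\oplus x$, so that $\binom{-f}{x}=qp$. The first fits in a split $\mathbb{E}$-triangle $A\xrightarrow{p}D\oplus A\xrightarrow{(1_D\ f)}D$ realizing $0\in\mathbb{E}(D,A)$ (it is isomorphic to the standard split conflation), and the second is the direct sum of the trivial triangle on $D$ with the triangle of $\delta$, namely $D\oplus A\xrightarrow{q}D\oplus B\xrightarrow{(0\ y)}C$, which realizes $(\iota_A)_*\delta$ for $\iota_A=\binom{0}{1}\colon A\to D\oplus A$. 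Feeding these composable $\mathbb{E}$-triangles into (ET4) produces an object $E'$, an $\mathbb{E}$-triangle $A\xrightarrow{\binom{-f}{x}}D\oplus B\xrightarrow{h'}E'$ realizing some $\delta''\in\mathbb{E}(E',A)$, and, by compatibility (i), an $\mathbb{E}$-triangle $D\xrightarrow{d'}E'\xrightarrow{e'}C$ realizing $(1_D\ f)_*(\iota_A)_*\delta=f_*\delta$ (using $(1_D\ f)\iota_A=f$).

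Finally I would match this with the given data. Since $D\xrightarrow{d'}E'\xrightarrow{e'}C$ and $D\xrightarrow{d}E\xrightarrow{e}C$ both realize $f_*\delta$, they are equivalent, so after transporting along the isomorphism $E'\cong E$ I may take $d'=d$ and $e'=e$. Compatibility (iii) now reads $p_*\delta''=e^*(\iota_A)_*\delta=(\iota_A)_*(e^*\delta)$; decomposing along $\mathbb{E}(E',D\oplus A)\cong\mathbb{E}(E',D)\oplus\mathbb{E}(E',A)$ and using $p=\iota_D\circ(-f)+\iota_A$ with $\iota_D=\binom{1}{0}$, the $A$-component yields precisely $\delta''=e^*\delta$, while the $D$-component yields $f_*(e^*\delta)=0$, which holds automatically since $e^*(f_*\delta)=0$ by Lemma \ref{lem2}. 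The commutative squares of the (ET4) diagram give $h'q=d(1_D\ f)=(d\ df)$ and $e\,h'=(0\ y)$; the first forces $h'=(d\ g)$ with $gx=df$, the second gives $eg=y$, so one and the same $g$ serves in both assertions. I expect the main difficulty to lie exactly in this last bookkeeping: correctly identifying the two input extensions ($0$ and $(\iota_A)_*\delta$) and carrying out the componentwise comparison in (iii) that pins down $\delta''=e^*\delta$, together with the observation that the two parts must be tied together through a single $g$ — an arbitrary morphism with $gx=df$, $eg=y$ need not make $\binom{-f}{x},(d\ g)$ realize $e^*\delta$, since replacing $g$ by $g+d\kappa$ alters the first map of the sequence.
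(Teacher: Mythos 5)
Your argument is correct, but note that the paper gives no proof of this lemma at all: it is quoted from \cite[Proposition 1.20]{LN} (itself a version of \cite[Corollary 3.16]{NP}), so the only meaningful comparison is with the toolkit the paper does set up. Your route checks out in detail: the factorization $\left[\begin{smallmatrix}-f\\ x\end{smallmatrix}\right]=(1_D\oplus x)\circ\left[\begin{smallmatrix}-f\\ 1\end{smallmatrix}\right]$, the identification of the two input conflations as realizing $0\in\mathbb{E}(D,A)$ and $(\iota_A)_*\delta$, the application of (ET4), reading $\delta''=e^*\delta$ off the $A$-component of compatibility (iii) (the $D$-component being automatically consistent since $e^*(f_*\delta)=0$ by Lemma \ref{lem2}), the transport along $E'\cong E$, and the extraction of $g$ from $h'q=\left[\begin{smallmatrix}d& df\end{smallmatrix}\right]$ and $eh'=\left[\begin{smallmatrix}0& y\end{smallmatrix}\right]$. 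You are also right to insist that the $g$ of the first assertion be the one produced by this construction rather than an arbitrary solution of $gx=df$, $eg=y$; strictly speaking your write-up should present the (ET4) construction first and then observe that its $g$ also realizes $(f,1_C)$, rather than invoking $(\star)$ separately at the outset. A shorter route, and the one closest to the machinery already quoted in the paper, is to apply Lemma \ref{lem1}(1) to the two conflations over $C$ realizing $\delta$ and $f_*\delta$: this produces $M$ with $\mathfrak{s}(e^*\delta)=[A\rightarrow M\rightarrow E]$ and $\mathfrak{s}(y^*f_*\delta)=[D\rightarrow M\rightarrow B]$, and since $y^*f_*\delta=f_*(y^*\delta)=0$ by Lemma \ref{lem2}, the second conflation splits, giving $M\cong D\oplus B$ and the stated $\mathbb{E}$-triangle after identifying the maps. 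Your (ET4) argument is longer but equally valid and self-contained. (Minor point: the statement's ``$f\colon A\rightarrow B$'' is a typo for $f\colon A\rightarrow D$, which you have silently and correctly assumed.)
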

We recall some concepts from \cite{NP}. Let $\mathcal{C},\mathbb{E}$ be as above. An object $P\in \mathcal{C}$ is called projective if it satisfies the following condition.

$\bullet$ For any $\E$-triangle $\xymatrix{A\ar[r]^{x}&B\ar[r]^{y}&C\ar@{-->}[r]^{\delta}&}$ and any morphism $c\in \mathcal{C}(P, C)$, there exists $b\in \mathcal{C}(P, B)$ satisfying $y\circ b=c$.

Injective objects are defined dually.

We denote the subcategory consisting of projective objects in $\C$ by $Proj(\C)$. Dually, the subcategory of injective objects in $\C$ is denoted by $Inj(\C)$.

$\bullet$ We say $\mathcal{C}$ has enough projectives (enough injectives, resp.) if for any object $C\in\C$($A\in \mathcal{C}$, resp.), there exists an $\E$-triangle
$$\xymatrix{A\ar[r]^{x}&P\ar[r]^{y}&C\ar@{-->}[r]^{\delta}&}(\xymatrix{A\ar[r]^{x}&I\ar[r]^{y}&C\ar@{-->}[r]^{\delta}&})$$satisfying $P\in Proj(\mathcal{C})$~$(I\in Inj(\mathcal{C}), resp.)$.

In this case, $A$ is called the syzygy of $C$ ($C$ is called the cosyzygy of $A$, resp.) and is denoted by $\Omega(C)$~$(\Sigma(A), resp.)$.

$\bullet$ $\mathcal{C}$ is said to be $Frobenius$ if $\mathcal{C}$ has enough projectives and injectives and if moreover the projectives coincide with the injectives. In this case one has the quotient category $\underline{\mathcal{C}}$ of $\mathcal{C}$ by projectives, which is a triangulated category by \cite {NP}. We refer to this as the stable category of $\C$.

\begin{rem} \rm{(1) If $(\mathcal{C}, \mathbb{E}, \mathfrak{s})$ is an exact category, then the definitions of enough projectives and enough injectives agree with the usual definitions.

(2) If $(\mathcal{C}, \mathbb{E}, \mathfrak{s})$ is a triangulated category, then $Proj(\mathcal{C})$ and $Inj(\mathcal{C})$ consist of zero objects. Moreover $(\mathcal{C}, \mathbb{E}, \mathfrak{s})$ is Frobenius as an extriangulated category.
}\end{rem}

Suppose $\mathcal{C}$ is an extriangulated category with enough projectives and injectives. For a subcategory $\mathcal{B}\subseteq \mathcal{C}$, put $\Omega^{0}\mathcal{B}=\mathcal{B}$, and for $i>0$ we define $\Omega^{i}\mathcal{B}$ inductively to be the subcategory consisting of syzygies of objects in $\Omega^{i-1}$, i.e.
$$\Omega^{i}\mathcal{B}=\Omega(\Omega^{i-1}\mathcal{B}).$$
We call $\Omega^{i}\mathcal{B}$ the $i$-th syzygy of $\mathcal{B}$. Dually we define the $i$-th cosyzygy $\Sigma^{i}\mathcal{B}$ by $\Sigma^{0}\mathcal{B}=\mathcal{B}$ and $\Sigma^{i}\mathcal{B}=\Sigma(\Sigma^{i-1}\mathcal{B})$ for $i>0.$

In \cite{LN} the authors defined higher extension groups in an extriangulated category having enough projectives and injectives as $\E^{i+1}(X, Y)\cong\E(X, \Sigma^{i}Y)\cong\E(\Omega^{i}X, Y)$ for $i\geq 0$, and they showed the following result:

\begin{lem}\label{lem4} Let $\xymatrix{A\ar[r]^{x}&B\ar[r]^{y}&C\ar@{-->}[r]^{\delta}&}$ be an $\E$-triangle. For any object $X\in \mathcal{B}$, there are long exact sequences
$$\xymatrix@C=0.5cm{\cdots\ar[r] &\E^{i}(X, A)\ar[r]^{x_{*}}&\E^{i}(X, B)\ar[r]^{y_{*}}&\E^{i}(X, C)\ar[r]&\E^{i+1}(X, A)\ar[r]^{x_{*}}&\E^{i+1}(X, B)\ar[r]^{y_{*}}&\cdots(i\geq1)},$$

$$\xymatrix@C=0.5cm{\cdots\ar[r] &\E^{i}(C, X)\ar[r]^{y^{*}}&\E^{i}(B, X)\ar[r]^{x^{*}}&\E^{i}(A, X)\ar[r]&\E^{i+1}(C, X)\ar[r]^{y^{*}}&\E^{i+1}(B, X)\ar[r]^{x^{*}}&\cdots(i\geq1)}.$$
\end{lem}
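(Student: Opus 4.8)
The plan is to derive both sequences from the six-term exact sequences of Lemma~\ref{lem2} by dimension shifting, using the isomorphisms $\E^{i+1}(X,Y)\cong\E(X,\Sigma^{i}Y)\cong\E(\Omega^{i}X,Y)$ that define the higher extension groups. I will treat the first (covariant) sequence in detail; the second is entirely dual, with injectives, cosyzygies and the second sequence of Lemma~\ref{lem2} replaced by projectives, syzygies and the first sequence, and with (ET4) replaced by (ET4)$^{\mathrm{op}}$.

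For the base case, applying the second long exact sequence of Lemma~\ref{lem2} to the given $\E$-triangle $A\xrightarrow{x}B\xrightarrow{y}C$ and evaluating at $X$ gives exactness of $\E(X,A)\xrightarrow{x_*}\E(X,B)\xrightarrow{y_*}\E(X,C)$, which is the degree-one piece. The heart of the argument is to manufacture a \emph{cosyzygy} $\E$-triangle $\Sigma A\to\Sigma B\to\Sigma C$. I first fix injective $\E$-triangles $A\to I\to\Sigma A$ and $C\to J\to\Sigma C$ with $I,J\in\mathrm{Inj}(\C)$. Since $x$ and $A\to I$ are two inflations out of $A$, Lemma~\ref{lem1}(2) yields a commutative diagram whose middle column is an $\E$-triangle $B\to M\to\Sigma A$ and whose middle row is an $\E$-triangle $I\to M\to C$; the latter splits because $\E(C,I)=0$, so $M\cong I\oplus C$ and we get an $\E$-triangle $B\to I\oplus C\to\Sigma A$. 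Composing the inflation $B\to I\oplus C$ with the injective resolution $I\oplus C\to I\oplus J$ (whose cosyzygy is $\Sigma C$, since $\Sigma I=0$) and applying (ET4) to this composable pair produces a cosyzygy $\Sigma B$ of $B$ together with the desired $\E$-triangle $\Sigma A\to\Sigma B\to\Sigma C$, namely compatibility (i) of (ET4). Iterating this yields cosyzygy triangles $\Sigma^{j}A\to\Sigma^{j}B\to\Sigma^{j}C$ for all $j\ge 0$.

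Now I argue by induction on $i$. Applying Lemma~\ref{lem2} to the triangle $\Sigma^{j}A\to\Sigma^{j}B\to\Sigma^{j}C$ and invoking $\E(X,\Sigma^{j}Y)\cong\E^{j+1}(X,Y)$ delivers exactness at every middle term $\E^{i}(X,B)$ at once. For the connecting morphism I would use the epimorphism $\C(X,\Sigma^{i}C)\twoheadrightarrow\E^{i}(X,C)$ that the injective resolution of $\Sigma^{i-1}C$ furnishes through Lemma~\ref{lem2} (the subsequent term, an $\E$-group with injective second argument, vanishes), and define $\partial\colon\E^{i}(X,C)\to\E^{i+1}(X,A)$ as the factorization, through this epimorphism, of the map $\C(X,\Sigma^{i}C)\to\E(X,\Sigma^{i}A)=\E^{i+1}(X,A)$ read off from the same six-term sequence. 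Splicing the six-term sequences of consecutive cosyzygy triangles then gives exactness at the remaining spots $\E^{i}(X,C)$ and $\E^{i+1}(X,A)$.

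The main obstacle is the coherence of this shifting procedure rather than any isolated step. Building the cosyzygy triangle is a horseshoe-type construction, and one must check, using the compatibilities (i)--(iii) of (ET4) and the diagram of Lemma~\ref{lem1}, that $\partial$ is well defined---that is, that the kernel of $\C(X,\Sigma^{i}C)\twoheadrightarrow\E^{i}(X,C)$ dies in $\E^{i+1}(X,A)$---and independent of the chosen resolutions, so that the degree-one piece from Lemma~\ref{lem2} glues exactly onto the inductively produced tail. Once the naturality of the isomorphisms $\E^{i+1}(X,Y)\cong\E(X,\Sigma^{i}Y)$ underlying the definition of the higher extension groups is taken for granted, these remaining checks are routine diagram chases. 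The dual construction, based on syzygies, projective resolutions and (ET4)$^{\mathrm{op}}$, produces the second long exact sequence.
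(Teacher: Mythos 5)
The paper offers no proof of this lemma: it is quoted directly from \cite{LN}, where it is established by essentially the dimension-shifting argument you describe. Your reconstruction --- building cosyzygy $\E$-triangles $\Sigma^{j}A\to\Sigma^{j}B\to\Sigma^{j}C$ via Lemma \ref{lem1}(2), the splitting of $I\to M\to C$ from $\E(C,I)=0$, and (ET4), then splicing the six-term sequences of Lemma \ref{lem2} --- is sound, and you correctly isolate the one nontrivial verification, namely that the connecting map kills the kernel of $\C(X,\Sigma^{i}C)\twoheadrightarrow\E^{i}(X,C)$; this does go through, using compatibility (iii) of (ET4) together with the observation that the deflation $J\to\Sigma^{i}C$ pulls the copresentation class back to zero.
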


An $\E$-triangle sequence in $\mathcal{C}$ \cite{ZZ1} is displayed as a sequence
$$\xymatrix{\cdots\ar[r]&X_{n+1}\ar[r]^{d_{n+1}}&X_{n}\ar[r]^{d_{n}}&X_{n-1}\ar[r]&\cdots&}$$
over $\mathcal{C}$ such that for any $n$, there are $\E$-triangles $\xymatrix{K_{n+1}\ar[r]^{g_{n}}&X_{n}\ar[r]^{f_{n}}&K_{n}\ar@{-->}[r]^{\delta^{n}}&}$ and the differential $d_{n}=g_{n-1}f_{n}$.

From now on to the end of the paper, we always suppose that extriangulated category $\C$ has enough projectives and injectives.

\section{\bf Relative Homological Dimensions}

 Let  $\mathcal{X}$ be a subcategory of $\mathcal{C}$. The symbol $\widehat{\X_{n}}$ ($\widecheck{\X_{n}}$, resp.) denotes the subcategory of objects $A\in \C$ such that there exits an $\E$-triangle sequence $X_{n}\rightarrow X_{n-1}\rightarrow\cdots\rightarrow X_{0}\rightarrow C $~$(C\rightarrow X_{0}\rightarrow\cdots\rightarrow X_{n-1}\rightarrow X_{n}, $ resp.$)$ with each $X_{i}\in \mathcal{X}$. We denote by $\widehat{X}$ ( $\widecheck{X}$, resp.) the union of all $\widehat{\X_{n}}$ ($\widecheck{\X_{n}}$, resp.) for some nonnegative $n$. That is to say $\widehat{\X}=\bigcup\limits_{n=0}^\infty \mathcal{X}_n, \widecheck{\X}=\bigcup\limits_{n=0}^\infty \widecheck{\X_{n}}$.

\begin{df}
(1) For any $C\in \mathcal{C}$, the $\mathcal{X}$-resolution dimension of $C$ is
\begin{center}
resdim$_{\mathcal{X}}(C)$:=min$\{n\in\mathbb{N}:C\in\widehat{\mathcal{X}_{n}}\}$.
\end{center}
If $C\not\in\widehat{\X_{n}}$ for any $n\in \mathbb{N}$, then resdim$_{\X}(C)=\infty$.
Dually, we also have the $\mathcal{X}$-coresolution dimension of $C$ denoted by coresdim$_{\mathcal{X}}(C)$.

(2) For any subcategory $\mathcal{Y}$ of $\mathcal{C}$, we set
\begin{center}
resdim$_{\mathcal{X}}(\mathcal{Y})$:=sup$\{$resdim$_{\mathcal{X}}(M): M\in\mathcal{Y}\}$.
\end{center}

Dually, we also have coresdim$_{\mathcal{X}}(\mathcal{Y})$.

(3) The $\mathcal{X}$-projective dimension of $C$ is
\begin{center}
pd$_{\mathcal{X}}(C)$:=min$\{n\in \mathbb{N}:\E^{i}(C, -)|_{\mathcal{X}}=0, \forall i>n \}$.
\end{center}

(4) The $\mathcal{X}$-injective dimension of $\mathcal{C}$ is
\begin{center}
id$_{\mathcal{X}}(C)$:=min$\{n\in \mathbb{N}:\E^{i}(-, C)|_{\mathcal{X}}=0, \forall i>n \}$.
\end{center}

(5) For any subcategory $\mathcal{Y}$ of $\mathcal{C}$, we set
\begin{center}
pd$_{\mathcal{X}}(\mathcal{Y})$:=sup$\{$pd$_{\mathcal{X}}(M): M\in\mathcal{Y}\}$ $~$and$~$ id$_{\mathcal{X}}(\mathcal{Y})$:=sup$\{$id$_{\mathcal{X}}(M): M\in\mathcal{Y}\}$.
\end{center}

\end{df}

\begin{lem}\label{lem5} Let $\mathcal{X}$ and $\mathcal{Y}$ be subcategories of $\mathcal{C}$. Then $pd_{\mathcal{X}}(\mathcal{Y})=id_{\mathcal{Y}}(\mathcal{X})$.
Furthermore, for any $\E$-triangle $\xymatrix{A\ar[r]^{x}&B\ar[r]^{y}&C\ar@{-->}[r]^{\delta}&}$ in $\mathcal{C}$, we have

\emph{(1)} $id_{\mathcal{X}}(B)\leq max\{id_{\mathcal{X}}(A), id_{\mathcal{X}}(C)\}$;

\emph{(2)} $id_{\mathcal{X}}(A)\leq max\{id_{\mathcal{X}}(B), id_{\mathcal{X}}(C)+1\}$;

\emph{(3)} $id_{\mathcal{X}}(C)\leq max\{id_{\mathcal{X}}(B), id_{\mathcal{X}}(A)-1\}$.
\end{lem}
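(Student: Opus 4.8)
The plan is to handle the equality $pd_{\mathcal{X}}(\mathcal{Y})=id_{\mathcal{Y}}(\mathcal{X})$ and the three inequalities by completely different means. For the equality I would simply unwind the definitions. For a fixed object $M$ one checks directly that
$$pd_{\mathcal{X}}(M)=\sup\{i\geq 1:\E^{i}(M, N)\neq 0 \text{ for some } N\in\mathcal{X}\},$$
with the convention that the supremum of the empty set is $0$: the least $n$ beyond which all groups $\E^{i}(M, N)|_{\mathcal{X}}$ vanish is exactly this supremum (finite, or $\infty$ when the index set is unbounded). Dually, $id_{\mathcal{Y}}(N)=\sup\{i\geq 1:\E^{i}(M, N)\neq 0 \text{ for some } M\in\mathcal{Y}\}$. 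Taking the supremum of the first expression over $M\in\mathcal{Y}$ and of the second over $N\in\mathcal{X}$, both $pd_{\mathcal{X}}(\mathcal{Y})$ and $id_{\mathcal{Y}}(\mathcal{X})$ collapse to the single double supremum $\sup\{i\geq 1:\E^{i}(M, N)\neq 0,\ M\in\mathcal{Y},\ N\in\mathcal{X}\}$, so they coincide. This step is purely formal; the only care required is the treatment of the empty and unbounded cases.

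For the inequalities the engine is the long exact sequence of Lemma \ref{lem4}. Fix an arbitrary $X\in\mathcal{X}$ and apply the first sequence of that lemma to the given $\E$-triangle, obtaining
$$\cdots\to\E^{i}(X, A)\xrightarrow{x_{*}}\E^{i}(X, B)\xrightarrow{y_{*}}\E^{i}(X, C)\to\E^{i+1}(X, A)\to\cdots\quad(i\geq 1).$$
Write $a=id_{\mathcal{X}}(A)$, $b=id_{\mathcal{X}}(B)$, $c=id_{\mathcal{X}}(C)$. Each bound is obtained by squeezing the middle term of interest between two vanishing neighbours. For (1), when $i>\max\{a, c\}$ both $\E^{i}(X, A)$ and $\E^{i}(X, C)$ vanish, so exactness of $\E^{i}(X, A)\to\E^{i}(X, B)\to\E^{i}(X, C)$ gives $\E^{i}(X, B)=0$. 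For (2), I would read off the stretch $\E^{i-1}(X, C)\to\E^{i}(X, A)\to\E^{i}(X, B)$; if $i>\max\{b, c+1\}$ then $i-1>c$ and $i>b$, so the outer terms vanish and $\E^{i}(X, A)=0$. For (3), the stretch $\E^{i}(X, B)\to\E^{i}(X, C)\to\E^{i+1}(X, A)$ together with $i>\max\{b, a-1\}$ (hence $i+1>a$) yields $\E^{i}(X, C)=0$. Since $X\in\mathcal{X}$ was arbitrary, these give the three asserted inequalities.

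I do not anticipate any genuine obstacle; the substance is packaged in Lemma \ref{lem4}, and the remaining work is bookkeeping of indices. The one point to watch is that the shifted arguments stay in the range $i\geq 1$ where the higher-extension sequence is valid: in (2) the hypothesis forces $i\geq 2$, so $\E^{i-1}(X, C)$ is a genuine extension group to which the definition of $c$ applies, and in (3) the index $i+1$ is automatically at least $2$. Thus no appeal to the Hom-level terms (supplied separately by Lemma \ref{lem2}) is needed, and the edge cases where $a-1<0$ are absorbed harmlessly by the maxima.
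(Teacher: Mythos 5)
Your proof is correct. The paper itself gives no argument (its proof reads ``It is straightforward.''), and what you have written is exactly the intended routine verification: the equality by unwinding both definitions to the common double supremum, and the three inequalities by reading off vanishing segments of the long exact sequence of Lemma \ref{lem4}, with the index bookkeeping (in particular $i>c+1$ forcing $i\geq 2$ in part (2)) handled correctly.
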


\begin{proof}
It is straightforward.
\end{proof}

For a subcategory $\mathcal{X}$ of $\mathcal{C}$, define $\mathcal{X}^{\perp}=\{Y\in\mathcal{C}|\E^{i}(X, Y)=0, \forall i\geq 1, X \in \mathcal{X}\}$. Similarly, we can define $^{\perp}\mathcal{X}.$ Now we give a relationship between the relative projective dimension and the resolution dimension.

\begin{thm}\label{thm1}
Let $\mathcal{X}$ and $\mathcal{Y}$ be subcategories of $\mathcal{C}$. Then, the following statements hold.

(1) pd$_{\mathcal{X}}(L)\leq$ pd$_{\mathcal{X}}(\mathcal{Y})$+resdim$_{\mathcal{Y}}(L)$, $\forall L \in \mathcal{C}$.

(2) If $\mathcal{Y}\subseteq \mathcal{X}\bigcap$ $^{\perp}\mathcal{X}$, then pd$_{\mathcal{X}}(L)$=resdim$_{\mathcal{Y}}(L)$, $\forall L \in \widehat{\mathcal{Y}}$.

\end{thm}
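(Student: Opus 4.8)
The plan is to handle the two statements separately, in each case peeling off the first $\mathcal{Y}$-syzygy and iterating by dimension shifting along the long exact sequences of Lemma~\ref{lem4}. Write $d=\mathrm{pd}_{\mathcal{X}}(\mathcal{Y})$ and $n=\mathrm{resdim}_{\mathcal{Y}}(L)$; if either is infinite then (1) is vacuous, so I assume both finite and induct on $n$. The base case $n=0$ is just $L\in\mathcal{Y}$, where $\mathrm{pd}_{\mathcal{X}}(L)\leq d$ by definition of $d$. For $n\geq 1$, choose a $\mathcal{Y}$-resolution of $L$ of length $n$ and isolate its first $\E$-triangle $K\to Y_0\to L$ (with class in $\E(L,K)$), where $Y_0\in\mathcal{Y}$ and $\mathrm{resdim}_{\mathcal{Y}}(K)\leq n-1$, so the induction hypothesis gives $\mathrm{pd}_{\mathcal{X}}(K)\leq d+n-1$. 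Applying the contravariant long exact sequence of Lemma~\ref{lem4} to this $\E$-triangle and to any $X\in\mathcal{X}$, the segment $\E^{i-1}(K,X)\to\E^{i}(L,X)\to\E^{i}(Y_0,X)$ has both outer terms zero once $i>d+n$ (the left since $i-1>d+(n-1)$, the right since $i>d\geq\mathrm{pd}_{\mathcal{X}}(Y_0)$), whence $\E^{i}(L,X)=0$ for all $i>d+n$, i.e. $\mathrm{pd}_{\mathcal{X}}(L)\leq d+n$.

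For (2), the hypothesis $\mathcal{Y}\subseteq{}^{\perp}\mathcal{X}$ says exactly $\mathrm{pd}_{\mathcal{X}}(\mathcal{Y})=0$, so $\mathrm{pd}_{\mathcal{X}}(L)\leq\mathrm{resdim}_{\mathcal{Y}}(L)$ is immediate from (1). The content is the reverse inequality, which I would prove by induction on $n=\mathrm{pd}_{\mathcal{X}}(L)$ in the sharper form: for $L\in\widehat{\mathcal{Y}}$, if $\mathrm{pd}_{\mathcal{X}}(L)\leq n$ then $\mathrm{resdim}_{\mathcal{Y}}(L)\leq n$. In the inductive step ($n\geq 1$), take again the first $\E$-triangle $K\to Y_0\to L$ of a $\mathcal{Y}$-resolution; since $Y_0\in\mathcal{Y}\subseteq{}^{\perp}\mathcal{X}$, Lemma~\ref{lem4} gives the dimension shift $\E^{i}(K,X)\cong\E^{i+1}(L,X)$ for $i\geq 1$ and all $X\in\mathcal{X}$, so $\mathrm{pd}_{\mathcal{X}}(K)\leq n-1$. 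The induction hypothesis yields $\mathrm{resdim}_{\mathcal{Y}}(K)\leq n-1$, and prepending $Y_0$ gives $\mathrm{resdim}_{\mathcal{Y}}(L)\leq n$.

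The main obstacle is the base case $n=0$: one must show that an $L\in\widehat{\mathcal{Y}}$ with $\E^{i}(L,-)|_{\mathcal{X}}=0$ for all $i\geq 1$ already lies in $\mathcal{Y}$. Here I would first record the auxiliary fact that, under this vanishing, $\E^{i}(L,M)=0$ for all $i\geq 1$ and all $M\in\widehat{\mathcal{Y}}$: because $\mathcal{Y}\subseteq\mathcal{X}$ the vanishing applies to every object of $\mathcal{Y}$, and an induction on $\mathrm{resdim}_{\mathcal{Y}}(M)$ using the covariant long exact sequence of Lemma~\ref{lem4} (with $Y_0\in\mathcal{Y}$ killing the flanking terms) propagates it to all of $\widehat{\mathcal{Y}}$. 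Granting this, take the first $\E$-triangle $K\to Y_0\to L$ with class $\delta\in\E(L,K)=\E^{1}(L,K)$; since $K\in\widehat{\mathcal{Y}}$ the auxiliary fact forces $\delta=0$, so the $\E$-triangle splits and $L$ is a direct summand of $Y_0\in\mathcal{Y}$. As $\mathcal{Y}$ is closed under direct summands, $L\in\mathcal{Y}$, i.e. $\mathrm{resdim}_{\mathcal{Y}}(L)=0$. It is precisely at this base case that both hypotheses are indispensable, $\mathcal{Y}\subseteq\mathcal{X}$ to feed $\mathcal{Y}$-objects into the vanishing and $\mathcal{Y}\subseteq{}^{\perp}\mathcal{X}$ for the syzygy shifts, and the splitting of $\delta$ is the step with no counterpart in the one-sided estimate (1).
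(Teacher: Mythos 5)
Your proof of (1) is the paper's proof verbatim in substance: induct on $\mathrm{resdim}_{\mathcal{Y}}(L)$, peel off the first $\mathcal{Y}$-syzygy, and squeeze $\E^{i}(L,X)$ between two vanishing terms of the long exact sequence.

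For (2) your argument is correct but runs the induction in the opposite direction from the paper. The paper fixes $d=\mathrm{resdim}_{\mathcal{Y}}(L)$ and shows by induction on $d$ that $\mathrm{pd}_{\mathcal{X}}(L)>d-1$: for $d=1$ it uses the triangle $Y_1\to Y_0\to L$ with \emph{both} ends in $\mathcal{Y}$, so that $\mathrm{pd}_{\mathcal{X}}(L)=0$ would force $\E(L,Y_1)=0$ and a splitting contradicting $d=1$; for $d\geq 2$ it derives a contradiction with $\mathrm{pd}_{\mathcal{X}}(K)=d-1$ from the segment $\E^{d-1}(Y,X)\to\E^{d-1}(K,X)\to\E^{d}(L,X)$. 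You instead induct on $n=\mathrm{pd}_{\mathcal{X}}(L)$ and prove the sharper one-sided bound $\mathrm{resdim}_{\mathcal{Y}}(L)\leq n$ directly via the dimension shift $\E^{i}(K,X)\cong\E^{i+1}(L,X)$. The price is your base case: because you cannot assume the cocone $K$ lies in $\mathcal{Y}$ (only in $\widehat{\mathcal{Y}}$), you need the auxiliary propagation $\E^{i}(L,-)|_{\widehat{\mathcal{Y}}}=0$ before you can split $\delta\in\E(L,K)$; the paper sidesteps this because in its $d=1$ case the cocone is already an object of $\mathcal{Y}$. Both arguments use the same two ingredients (dimension shifting along a $\mathcal{Y}$-syzygy and splitting at the bottom, the latter legitimate since subcategories are assumed closed under direct summands), and your version has the mild advantage of yielding the clean monotone statement ``$\mathrm{pd}_{\mathcal{X}}(L)\leq n$ implies $\mathrm{resdim}_{\mathcal{Y}}(L)\leq n$'' rather than a proof by contradiction.
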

\begin{proof}
(1) Let $d$:=resdim$_{\mathcal{Y}}(L)$ and $\alpha$:=pd$_{\mathcal{X}}(\mathcal{Y})$. We may assume that $d$ and $\alpha$ are finite. We prove (1) by induction on $d$. If $d=0$, it follows that $L\in \mathcal{Y}$, hence (1) holds in this case.

Assume $d\geq 1$. So we have an $\E$-triangle $$\xymatrix{K\ar[r]^{x}&Y\ar[r]^{y}&L\ar@{-->}[r]^{\delta}&}$$ in $\mathcal{C}$ with $Y\in \mathcal{Y}$ and resdim$_{\mathcal{Y}}(K)=d-1$. Applying ${\rm Hom}_\C(-, M)$, with $M\in \X$, to the $\E$-triangle $\delta$, we get an exact sequence $\E^{i-1}(K, M)\rightarrow \E^{i}(L, M)\rightarrow \E^{i}(Y, M)$. By induction, we know that $pd_{\mathcal{X}}(K)\leq \alpha+d-1$. Therefore $\E^{i}(L, M)=0$ for $i>\alpha+d$, and so $pd_{\mathcal{X}}(L)\leq \alpha+d$.

(2) Let $\mathcal{Y}\subseteq \mathcal{X}\bigcap$ $^{\perp}\mathcal{X}$. Consider $L \in \widehat{\mathcal{Y}}$ and let $d$=resdim$_{\mathcal{Y}}(L)$. Since $pd_{\mathcal{X}}(\mathcal{Y})=0$, it follows that pd$_{\mathcal{X}}(L)\leq$resdim$_{\mathcal{Y}}(L)=d$ by $(1)$. We prove, by induction on $d$, that the equality given in $(2)$ holds. For $d=0$, it is obvious.

Suppose that $d=1$. Then we have an $\E$-triangle $$\xymatrix{Y_{1}\ar[r]^{x}&Y_{0}\ar[r]^{y}&L\ar@{-->}[r]^{\delta}&}$$ in $\mathcal{C}$ with $Y_{i}\in \mathcal{Y}, i=1, 2$. If pd$_{\mathcal{X}}(L)=0$, then $L\in$ $~^{\perp}\mathcal{X}$. Since $\mathcal{Y}\subseteq \mathcal{X}$, $\E(L, Y_{1})=0$, therefore the $\E$-triangle $\delta$ splits giving us that $L\in \mathcal{Y}$, which is a contradiction as $d=1$. So pd$_{X}(L)>0$ proving (2) for $d=1$.

Assume now that $d\geq 2$. Thus we have an $\E$triangle $$\xymatrix{K\ar[r]^{d}&Y\ar[r]^{e}&L\ar@{-->}[r]^{\theta}&}$$ in $\mathcal{C}$ with $Y\in\mathcal{Y}$, resdim$_{\mathcal{Y}}(K)=d-1$. Hence pd$_{\mathcal{X}}(K)=d-1$ by inductive hypothesis. For any $X\in \mathcal{X}$, there is an exact sequence
$$\E^{d-1}(Y, X)\rightarrow \E^{d-1}(K, X)\rightarrow \E^{d}(L, X).$$
If pd$_{\mathcal{X}}(L)\leq d-1$, then $\E^{d-1}(K, X)$=0 contradicting that pd$_{\mathcal{X}}(K)=d-1$. This means that pd$_{\mathcal{X}}(L)>d-1$; proving (2).
\end{proof}

Now, we begin to focus our attention on pairs $(\X, \W)$ of subcategories of $\C$ and study the relationship between $\X$-injective cogenerators for $\X$ and $\widehat{\X}$.
\begin{df}
Let $\mathcal{X}$ and $\mathcal{W}$ be two subcategories of $\mathcal{C}$. We say that

(1) $\mathcal{W}$ is a cogenerator for $\mathcal{X}$, if $\mathcal{W}\subseteq\mathcal{X}$ and for each object $X\in \mathcal{X}$, there exists an $\E$-triangle $\xymatrix{X\ar[r]^{x}&W\ar[r]^{y}&X'\ar@{-->}[r]^{\delta}&}$. The term generator is defined dually.

(2) $\mathcal{W}$ is $\mathcal{X}$-injective if id$_{\mathcal{X}}(\mathcal{W})$=0. The term $\mathcal{X}$-projective is defined dually.

(3) $\mathcal{W}$ is an $\mathcal{X}$-injective cogenetator for $\X$ if $\mathcal{W}$ is a cogenerator for $\mathcal{X}$ and id$_{\mathcal{X}}(\mathcal{W})$=0. The term $\mathcal{X}$-projective generator for $\X$ is defined dually.
\end{df}

In the following, let $\X$ and $\W$ be two subcategories of $\C$ such that $\W\subseteq\X$.

\begin{lem}\label{lem6}
Suppose that $\mathcal{X}$ is an extension closed subcategory of $\mathcal{C}$. Consider two $\E$-triangles
\begin{center}
$\xymatrix{N\ar[r]^{a}&X_{1}\ar[r]^{b}&D\ar@{-->}[r]^{\delta}&} and   \xymatrix{D\ar[r]^{c}&X_{0}\ar[r]^{d}&M\ar@{-->}[r]^{\theta}&}$
\end{center}
 in $\mathcal{C}$ with $X_{0}, X_{1}\in \mathcal{X}$. If $\mathcal{W}$ is a cogenetator for $\mathcal{X}$, then there exist two $\E$-triangles
\begin{center}
$\xymatrix{N\ar[r]^{}&W_{1}\ar[r]^{}&D'\ar@{-->}[r]^{}&} and \xymatrix{D'\ar[r]^{}&X_{0}'\ar[r]^{}&M\ar@{-->}[r]^{}&}$
\end{center}
 with $W_{1}\in \mathcal{W}$ and $X_{0}'\in\mathcal{X}$.
\end{lem}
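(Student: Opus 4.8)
The plan is to enlarge the middle term $X_1$ of the first $\E$-triangle using the cogenerator $\W$, and then to feed the outcome through two of the tools already available in order to rebuild both $\E$-triangles with the required objects.

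First I would invoke the cogenerator property on $X_1\in\X$. Since $\W$ is a cogenerator for $\X$, there is an $\E$-triangle
$$\xymatrix{X_1\ar[r]^{u}&W_1\ar[r]^{v}&X_1'\ar@{-->}[r]^{\vp}&}$$
with $W_1\in\W$ and $X_1'\in\X$. Next I would apply (ET4) to the pair consisting of the given $\E$-triangle $\xymatrix{N\ar[r]^{a}&X_1\ar[r]^{b}&D\ar@{-->}[r]^{\delta}&}$ and the triangle just produced, the shared object $X_1$ being the middle term of the former and the initial term of the latter. This returns an object $E$, an $\E$-extension $\delta''\in\E(E,N)$ realized by an $\E$-triangle
$$\xymatrix{N\ar[r]&W_1\ar[r]&E\ar@{-->}[r]^{\delta''}&}$$
together with an auxiliary $\E$-triangle $\xymatrix{D\ar[r]&E\ar[r]&X_1'\ar@{-->}[r]&}$. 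Setting $D':=E$, the displayed triangle with $W_1\in\W$ is already the first triangle asked for.

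Finally, to produce the second triangle I would apply Lemma \ref{lem1}(2) to the auxiliary triangle $\xymatrix{D\ar[r]&E\ar[r]&X_1'\ar@{-->}[r]&}$ and the second given triangle $\xymatrix{D\ar[r]^{c}&X_0\ar[r]&M\ar@{-->}[r]^{\theta}&}$, which share the initial object $D$. This yields a commutative diagram containing two $\E$-triangles, one of the form $\xymatrix{X_0\ar[r]&M'\ar[r]&X_1'\ar@{-->}[r]&}$ and one of the form $\xymatrix{E\ar[r]&M'\ar[r]&M\ar@{-->}[r]&}$. Since $X_0,X_1'\in\X$ and $\X$ is extension closed, the first of these forces $M'\in\X$; hence the second, with $X_0':=M'$, is precisely the desired $\E$-triangle $\xymatrix{D'\ar[r]&X_0'\ar[r]&M\ar@{-->}[r]&}$.

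The genuine content is only the choice of the two tools and the correct pairing of triangles; everything else is bookkeeping. I expect the main obstacle to be exactly this bookkeeping: making sure the shared objects sit in the positions demanded by (ET4) (middle term of one triangle, initial term of the other) and by Lemma \ref{lem1}(2) (initial term of both), so that the constructed triangles assemble into the two claimed ones, and observing that extension-closedness of $\X$ is what promotes the middle term $M'$ into $\X$.
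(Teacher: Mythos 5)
Your argument is correct and follows essentially the same route as the paper: first enlarge $X_1$ via the cogenerator and apply (ET4) to produce $N\to W_1\to D'$ together with the auxiliary conflation $D\to D'\to X_1'$, then apply Lemma \ref{lem1}(2) to that conflation and $D\to X_0\to M$, using extension-closedness of $\X$ to place the new middle term in $\X$. No gaps.
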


\begin{proof}
 We have an $\E$-triangle $\xymatrix{X_{1}\ar[r]^{}&W_{1}\ar[r]^{}&X_{1}'\ar@{-->}[r]^{}&}$ with $W_{1}\in\W$ and $X_{1}'\in\X$ as $\W$ is a cogenerator for $\X$. By $(ET4)$, we obtain a commutative diagram in $\C$
\begin{center}
$\xymatrix{N\ar[r]^{}\ar@{=}[d]&X_{1}\ar[r]^{}\ar[d]_{}&D\ar[d]^{}&\\
N\ar[r]^{}&W_{1}\ar[r]^{}\ar[d]_{}&D'\ar[d]^{}&\\
&X_{1}'\ar@{=}[r]&X_{1}'.&&&}$
\end{center}
 By Lemma \ref{lem1} (2), we have the following commutative diagram in $\C$
$$\xymatrix{
     D\ar[d]_{} \ar[r]^{} & X_0 \ar[d]^{}\ar[r]^{}&M\ar@{=}[d] \\
  D' \ar[d]_{} \ar[r]^{} & X_{0}' \ar[d]^{} \ar[r]^{} & M \\
  X_{1}' \ar@{=}[r] & X_{1}'.}
  $$ Since $\X$ is closed under extensions, it follows that $X_{0}'\in \X$. The second rows in the above two diagrams are desired $\E$-triangles.
\end{proof}

\begin{lem}\label{lem7}
Suppose $\X$ is closed under extensions and $\W$ is a cogenerator for $\X$. Then for any $X\in\X$ and nonnegative integer $n$, $C\in\widehat{\X_{n}}$ if and only if there exists an $\E$-triangle
$$W_{n}\rightarrow\cdots\rightarrow W_{2}\rightarrow W_1\rightarrow X_0\rightarrow C$$ with $X_{0}\in \X$ and $W_{i}\in\W$ for $1\leq i\leq n$.
\end{lem}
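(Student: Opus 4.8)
The plan is to treat the two implications asymmetrically. The ``if'' direction is immediate: since $\W\subseteq\X$, any sequence $W_n\to\cdots\to W_1\to X_0\to C$ of the stated form is already an $\E$-triangle sequence with all terms in $\X$, so $C\in\widehat{\X_n}$ by definition. All the work is in the ``only if'' direction, which I would prove by induction on $n$, with Lemma \ref{lem6} supplying the single replacement step that swaps an $\X$-object for a $\W$-object.

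For the induction, suppose $C\in\widehat{\X_n}$ with $n\geq 1$ (the case $n=0$ being trivial, as then $C\in\X$). Decomposing the given length-$n$ $\X$-resolution of $C$ at its first step produces an $\E$-triangle $K\to X_0\to C$ with $X_0\in\X$ whose syzygy $K$ lies in $\widehat{\X_{n-1}}$. Applying the induction hypothesis to $K$ yields an $\E$-triangle sequence $W_{n-1}\to\cdots\to W_1\to X_0'\to K$ with $X_0'\in\X$ and each $W_i\in\W$; reading off its first step gives an $\E$-triangle $L\to X_0'\to K$ with $X_0'\in\X$, while the remaining terms $W_{n-1}\to\cdots\to W_1\to L$ constitute a $\W$-resolution of $L$ (with $L=0$ when $n=1$). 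Now the pair of $\E$-triangles $L\to X_0'\to K$ and $K\to X_0\to C$ is exactly the input required by Lemma \ref{lem6} (with $N=L$, $X_1=X_0'$, $D=K$, $M=C$), and that lemma returns $\E$-triangles $L\to W\to D'$ with $W\in\W$ and $D'\to X_0''\to C$ with $X_0''\in\X$.

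It then remains to splice. Stacking the $\W$-resolution $W_{n-1}\to\cdots\to W_1\to L$ onto the triangle $L\to W\to D'$ produces a $\W$-resolution $W_{n-1}\to\cdots\to W_1\to W\to D'$ of $D'$, and stacking the triangle $D'\to X_0''\to C$ on top of that yields the sequence
$$W_{n-1}\to\cdots\to W_1\to W\to X_0''\to C,$$
which after relabelling ($X_0''$ as the lone $\X$-term and $W,W_1,\dots,W_{n-1}$ as the $n$ successive $\W$-terms) has precisely the desired shape; this closes the induction. I expect the main obstacle to be purely bookkeeping: verifying that splicing two $\E$-triangles along a shared object genuinely yields an $\E$-triangle sequence in the sense defined after Lemma \ref{lem4} (the differentials being the relevant composites of inflations and deflations), and checking that the index $n$ is preserved, namely that the $n-1$ objects of $\W$ furnished by the induction hypothesis together with the single object produced by Lemma \ref{lem6} give exactly $n$ objects of $\W$. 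I would also record the degenerate base instance $n=1$ explicitly, where $K\cong X_0'\in\X$ and one feeds the split $\E$-triangle $0\to X_0'\to K$ into Lemma \ref{lem6}.
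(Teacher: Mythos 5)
Your proposal is correct and follows essentially the same route as the paper's own proof: induction on $n$, peeling off the first step of the resolution, applying the induction hypothesis to the syzygy $K$, and then using Lemma \ref{lem6} on the pair of $\E$-triangles (last step of the resolution of $K$, and $K\to X_0\to C$) before splicing. The only differences are cosmetic (index labels, and the paper treats $n=1$ and $n\geq 2$ as separate displayed cases rather than folding $n=1$ in via $L=0$).
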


\begin{proof}
 The ``~if~" part is trivial. We prove the  ``~only if~" part by induction on $n$. If $n=1$, then there exists an $\E$-triangles
\begin{center}
  $\xymatrix{X_1\ar[r]^{}&X_{0}\ar[r]^{}&C\ar@{-->}[r]^{}&}$ and $\xymatrix{0\ar[r]^{}&X_{1}\ar[r]^{}&X_{1}\ar@{-->}[r]^{}&}$ with $X_{1}, X_0\in \X$.
\end{center}
   By Lemma \ref{lem6}, we abtain an $\E$-trangle $\xymatrix{W_{1}\ar[r]^{}&X_{0}'\ar[r]^{}&C\ar@{-->}[r]^{}&}$ that is desired.

 Suppose now $n\geq 2$. Then we have an $\E$-trangle $$\xymatrix{K\ar[r]^{}&X_{0}'\ar[r]^{}&C\ar@{-->}[r]^{}&}$$
with $X_0 \in \X$ and $K \in \widehat{\X_{n-1}}$. By the induction, there exists an $\E$-trangle sequence
 $$W_n\rightarrow \cdots\rightarrow W_{2}\stackrel{f} \longrightarrow X_1\rightarrow K$$ with $X_1 \in \X$ and
 $W_i \in \W$ for $2\leq i\leq n$.
  Hence we have an $\E$-trangle sequence $$W_{n-1}\rightarrow \cdots \rightarrow W_2 \stackrel{f_1}\longrightarrow K'$$ and a $\E$-trangle $$\xymatrix{K'\ar[r]^{f_2}&X_1\ar[r]&K\ar@{-->}[r]&}$$ with $f_2f_1=f$.
 Applying Lemma \ref{lem6}(1) to $\E$-triangles
 \begin{center}
 $\xymatrix{K'\ar[r]^{f_2}&X_1\ar[r]&K\ar@{-->}[r]&}$ and $\xymatrix{K\ar[r]^{}&X_{0}'\ar[r]^{}&C\ar@{-->}[r]^{}&}$,
 \end{center}
  we obtain two $\E$-triangles
\begin{center}
  $\xymatrix{K'\ar[r]^{}&W_1\ar[r]&K''\ar@{-->}[r]&}$ and $\xymatrix{K''\ar[r]^{}&X_{0}\ar[r]^{}&C\ar@{-->}[r]^{}&}$.
\end{center}
   Therefore, we have a $\E$-triangle sequence
 $$W_{n}\rightarrow\cdots\rightarrow W_{2}\rightarrow W_1\rightarrow X_0\rightarrow C$$ with $X_{0}\in \X$ and $W_{i}\in\W$ for $1\leq i\leq n$.
\end{proof}

The following theorem shows that any object in $\widehat{\X}$ admits two $\E$-triangles: one giving rise to an $\X$-precover and the other to a $\widehat{\W}$-preenvelope, which generalizes \cite[Theorem 5.4]{MSSS1}.
\begin{thm}\label{thm2}
Suppose $\X$ is closed under extensions and $\W$ is a cogenerator for $\X$. Consider the following conditions:

(1) $C$ is in $\widehat{\X_n}.$

(2) There exists an $\E$-triangle
$$\xymatrix{Y_{C}\ar[r]^{}&X_{C}\ar[r]^{\varphi_{C}}&C\ar@{-->}[r]^{\delta}&}$$
with $X_{C}\in \X$ and $Y_{C}\in \widehat{\W_{n-1}}.$

(3) There exists an $\E$-triangle
$$\xymatrix{C\ar[r]^{\psi^{C}}&Y^{C}\ar[r]^{}&X^{C}\ar@{-->}[r]^{\theta}&}$$
with $X^{C}\in \X$ and $Y^{C}\in \widehat{\W_{n}}.$

Then, $(1)\Leftrightarrow(2)\Rightarrow(3)$. If $\X$ is also closed under CoCones, then $(3)\Rightarrow(2)$, and hence all three conditions are equivalent. If $\W$ is $\X$-injective, then $\varphi_{C}$ is an $\X$-precover of $C$ and $\psi^{C}$ is a $\widehat{\W}$-preenvelope of $C$.

\end{thm}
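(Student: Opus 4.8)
The plan is to prove the cycle $(1)\Leftrightarrow(2)$, then $(2)\Rightarrow(3)$ and $(3)\Rightarrow(2)$ (the latter only under CoCone-closure), and finally to deduce the precover/preenvelope statements from the two $\E$-triangles already produced in $(2)$ and $(3)$ together with the $\X$-injectivity of $\W$. The equivalence $(1)\Leftrightarrow(2)$ I would read off directly from Lemma \ref{lem7}. Given $C\in\widehat{\X_n}$, that lemma produces an $\E$-triangle sequence $W_n\to\cdots\to W_1\to X_0\to C$ with $X_0\in\X$ and all $W_i\in\W$; splitting off its first $\E$-triangle $Y_C\to X_0\to C$ exhibits $X_C:=X_0\in\X$ and leaves $W_n\to\cdots\to W_1\to Y_C$, i.e. $Y_C\in\widehat{\W_{n-1}}$, which is $(2)$. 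Conversely, since $\W\subseteq\X$ we have $\widehat{\W_{n-1}}\subseteq\widehat{\X_{n-1}}$, so splicing a $\W$-resolution of $Y_C$ onto the triangle of $(2)$ gives an $\X$-resolution of length $n$, i.e. $(1)$.

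The technical heart is $(2)\Leftrightarrow(3)$, where I would push the $\E$-triangle of $(2)$ through the cogenerator via the octahedral-type axioms. For $(2)\Rightarrow(3)$: since $\W$ is a cogenerator for $\X$ and $X_C\in\X$, choose an $\E$-triangle $X_C\to W_0\to X'$ with $W_0\in\W$ and $X'\in\X$; feeding $Y_C\to X_C\to C$ and $X_C\to W_0\to X'$ into (ET4) produces an object $E$ sitting in $\E$-triangles $Y_C\to W_0\to E$ and $C\to E\to X'$. Splicing a $\W$-resolution of $Y_C$ along the first triangle shows $E\in\widehat{\W_n}$, so the second triangle is precisely $(3)$ with $Y^C:=E$, $X^C:=X'$ and $\psi^C$ the inflation $C\to E$. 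For $(3)\Rightarrow(2)$ I would dualize: take the first syzygy $\E$-triangle $K\to W_0\to Y^C$ of $Y^C\in\widehat{\W_n}$ (so $K\in\widehat{\W_{n-1}}$, $W_0\in\W$) and apply (ET4)$^{\mathrm{op}}$ to it together with $C\to Y^C\to X^C$, obtaining $\E$-triangles $K\to E\to C$ and $E\to W_0\to X^C$. This is exactly where closure of $\X$ under CoCones is indispensable: $W_0,X^C\in\X$ force $E\in\X$, and then $K\to E\to C$ is the $\E$-triangle demanded by $(2)$.

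For the final claims, assume $\W$ is $\X$-injective, i.e. $\mathrm{id}_{\X}(\W)=0$. The key vanishing is $\E^i(X',Y_C)=0$ for all $X'\in\X$ and $i\geq1$ whenever $Y_C\in\widehat{\W_{n-1}}$, together with the dual statement $\E^i(X^C,W')=0$ for $W'\in\widehat{\W}$; both I would prove by induction on the length of a $\W$-resolution, using the long exact sequences of Lemma \ref{lem4} for dimension shifting and $\mathrm{id}_{\X}(\W)=0$ to kill the base terms. Applying $\C(X',-)$ to $Y_C\to X_C\xrightarrow{\varphi_C}C$ and reading the exact sequence of Lemma \ref{lem2}, the surjectivity of $\C(X',\varphi_C)$ follows from $\E(X',Y_C)=0$, so $\varphi_C$ is an $\X$-precover; dually, applying $\C(-,W')$ to $C\xrightarrow{\psi^C}Y^C\to X^C$ and invoking $\E(X^C,W')=0$ shows $\C(\psi^C,W')$ is surjective, so $\psi^C$ is a $\widehat{\W}$-preenvelope.

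I expect the main obstacle to be the bookkeeping in $(2)\Leftrightarrow(3)$: one must track the directions of the morphisms in (ET4) and (ET4)$^{\mathrm{op}}$ carefully so that the $\W$-resolution of $Y_C$ (resp. the syzygy $K$ of $Y^C$) splices to give exactly the dimension $\widehat{\W_n}$ (resp. $\widehat{\W_{n-1}}$), and so that the CoCone-closure hypothesis is applied to the correct $\E$-triangle $E\to W_0\to X^C$. Everything else is routine once these two diagrams are set up with the right labels.
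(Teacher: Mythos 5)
Your proposal is correct and follows essentially the same route as the paper: Lemma \ref{lem7} for $(1)\Leftrightarrow(2)$, an application of (ET4) to the cogenerator triangle $X_C\to W\to X'$ for $(2)\Rightarrow(3)$, an application of (ET4)$^{\rm op}$ to a syzygy triangle of $Y^C$ (with CoCone-closure forcing the middle term into $\X$) for $(3)\Rightarrow(2)$, and the long exact sequence of Lemma \ref{lem2} together with $\mathrm{id}_{\X}(\widehat{\W})=\mathrm{id}_{\X}(\W)=0$ (Lemma \ref{lem8}) for the precover and preenvelope claims. The only cosmetic difference is that you spell out the dimension-shifting argument for $\E^{i}(X',Y_C)=0$, which the paper delegates to Lemma \ref{lem8}.
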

\begin{proof}
$(1)\Leftrightarrow(2)$ follows from Lemma \ref{lem7}.

$(2)\Rightarrow(3)$ Since $X_{C}\in\X$ and $\W$ is a cogenerator for $\X$, we have an $\E$-triangle
$$\xymatrix{X_{C}\ar[r]^{x}&W\ar[r]^{y}&X'\ar@{-->}[r]^{}&}$$ with $W\in\W$ and $X'\in\X$.
By $(ET4)$, we obtain a commutative diagram
$$\xymatrix{Y_{C}\ar[r]^{}\ar@{=}[d]&X_{C}\ar[r]^{}\ar[d]_{}&C\ar[d]^{}&\\
Y_{C}\ar[r]^{}&W\ar[r]^{}\ar[d]_{}&Y^{C}\ar[d]^{}&\\
&X'\ar@{=}[r]&X'.&&&}$$
From the second row, it follows that $Y^{C}\in \widehat{\W_{n}}$. Hence the third column is the desired one.

Suppose $\X$ is also closed under CoCones. Since $Y^{C}\in \widehat{\W_{n}}$, we have an $\E$-triangle
$$\xymatrix{Y_{C}\ar[r]^{x}&W\ar[r]^{y}&Y^{C}\ar@{-->}[r]^{}&}$$
in $\C$ with $W\in \W$ and $Y_{C}\in \widehat{W_{n-1}}$. By $(ET4)^{op}$, we obtain a commutative diagram
$$\xymatrix{Y_{C}\ar[r]^{}\ar@{=}[d]&X_{C}\ar[r]^{}\ar[d]_{}&C\ar[d]^{}&\\
Y_{C}\ar[r]^{}&W\ar[r]^{}\ar[d]_{}&Y^{C}\ar[d]^{}&\\
&X^{C}\ar@{=}[r]&X^{C}.&&&}$$ Since $\X$ is closed under CoCones, it follows that $X_{C}\in \X$. Hence the first row is the desired $\E$-triangle.

Assume $\W$ is $\X$-injective. Applying $\mathcal{C}(X,-)$ to the $\E$-triangle

$$\xymatrix{Y_{C}\ar[r]^{}&X_{C}\ar[r]^{\varphi_{C}}&C\ar@{-->}[r]^{\delta}&},$$
we have an exact sequence %$${\rm Hom}_{\mathcal{C}}(X,-)\stackrel{{\rm Hom}_{\mathcal{C}}(X,\varphi_{C})}\longrightarrow{\rm Hom}_{\mathcal{C}}(X,-)\rightarrow\E(X, Y_{C})$$
$$\xymatrix{\mathcal{C}(X, X_{C})\ar[rr]^{\mathcal{C}(X, \varphi_{C})}&&{\mathcal{C}}(X, C)\ar[r]^{}&\E(X, Y_{C}).&&}$$ Since $\E(X, Y_{C})=0$, it follows that ${\rm Hom}_{\mathcal{C}}(X, \varphi_{C})$ is an epimorphism, hence $\varphi_{C}$ is an $\X$-precover of $C$. Similarly, we can prove that $\psi^{C}$ is a $\widehat{\W}$-preenvelope of $C$.
\end{proof}

\begin{lem}\label{lem8}
Let $\X$ and $\mathcal{Y}$ be subcategories of $\C$. Then
id$_{\mathcal{Y}}(\mathcal{\widehat{\X}})$=id$_{\mathcal{Y}}(\mathcal{\X}).$

\end{lem}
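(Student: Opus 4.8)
The plan is to prove the two inequalities separately. Since every object of $\X$ carries the trivial length-$0$ resolution by itself, we have $\X\subseteq\widehat{\X}$, and therefore the inequality $\mathrm{id}_{\mathcal{Y}}(\X)\leq\mathrm{id}_{\mathcal{Y}}(\widehat{\X})$ is immediate, as $\mathrm{id}_{\mathcal{Y}}(-)$ is a supremum taken over a larger class on the right. Consequently the whole statement reduces to the reverse inequality $\mathrm{id}_{\mathcal{Y}}(\widehat{\X})\leq\mathrm{id}_{\mathcal{Y}}(\X)$. Writing $n:=\mathrm{id}_{\mathcal{Y}}(\X)$, I may assume $n<\infty$, for if $n=\infty$ the two sides already agree by the inequality just noted. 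Thus it suffices to verify that $\mathrm{id}_{\mathcal{Y}}(C)\leq n$, i.e. $\E^{i}(Y,C)=0$ for all $Y\in\mathcal{Y}$ and all $i>n$, for every $C\in\widehat{\X}$.

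Next I would argue by induction on $m:=\mathrm{resdim}_{\X}(C)$. If $m=0$ then $C\in\X$ and $\mathrm{id}_{\mathcal{Y}}(C)\leq\sup_{X\in\X}\mathrm{id}_{\mathcal{Y}}(X)=n$ by definition, which settles the base case. For the inductive step, suppose $m\geq1$ and $C\in\widehat{\X_{m}}$. Unwinding the definition of an $\E$-triangle sequence $X_{m}\to\cdots\to X_{0}\to C$ exactly as in the proof of Lemma \ref{lem7}, its last stage provides an $\E$-triangle
$$\xymatrix{K\ar[r]&X_{0}\ar[r]&C\ar@{-->}[r]&}$$
with $X_{0}\in\X$ and $K\in\widehat{\X_{m-1}}$, so that $\mathrm{resdim}_{\X}(K)\leq m-1$ and the inductive hypothesis applies to $K$, giving $\mathrm{id}_{\mathcal{Y}}(K)\leq n$.

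Finally I would feed this $\E$-triangle into the first long exact sequence of Lemma \ref{lem4} with test object $Y\in\mathcal{Y}$, yielding the exact portion
$$\xymatrix@C=1.4em{\E^{i}(Y,X_{0})\ar[r]&\E^{i}(Y,C)\ar[r]&\E^{i+1}(Y,K)}$$
for every $i\geq1$. For $i>n$ the left term vanishes because $X_{0}\in\X$ and $\mathrm{id}_{\mathcal{Y}}(X_{0})\leq n$, while the right term vanishes because $i+1>n$ and $\mathrm{id}_{\mathcal{Y}}(K)\leq n$; exactness then forces $\E^{i}(Y,C)=0$. As $Y\in\mathcal{Y}$ and $i>n$ were arbitrary, $\mathrm{id}_{\mathcal{Y}}(C)\leq n$, completing the induction and hence the proof. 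The only genuinely delicate point is the middle step: one must correctly read off the short $\E$-triangle $K\to X_{0}\to C$ together with the membership $K\in\widehat{\X_{m-1}}$ from the definition of an $\E$-triangle sequence, and make sure the long exact sequence is invoked in the variable that computes $\mathrm{id}_{\mathcal{Y}}$ (the covariant second argument of $\E^{i}(Y,-)$) rather than the contravariant one.
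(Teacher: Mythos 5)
Your proof is correct and follows essentially the same route as the paper's: the easy inequality from $\X\subseteq\widehat{\X}$, then induction on $\mathrm{resdim}_{\X}(C)$ using the $\E$-triangle $K\to X_{0}\to C$ with $K\in\widehat{\X_{n-1}}$ and the long exact sequence of Lemma \ref{lem4} in the covariant variable. No discrepancies to report.
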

\begin{proof}
Since $\X\subseteq\widehat{\X}$, it follows that id$_{\mathcal{Y}}(\mathcal{\X})$$\leq$id$_{\mathcal{Y}}(\mathcal{\widehat{\X}}).$  It is enough to show that id$_{\mathcal{Y}}(\mathcal{\widehat{\X}})$$\leq$id$_{\mathcal{Y}}(\mathcal{\X}).$ We may assume that $\alpha$:=id$_{\mathcal{Y}}(\mathcal{\X})$ is finite. Let $C\in \widehat{\X}$. We prove that, by induction on $n=$resdim$_{\mathcal{X}}(C)$. If $n=0$, then $C\in \X$, there is nothing to prove.

Let $n\geq1$. Then we have an $\E$-triangle
$$\xymatrix{K\ar[r]^{}&X\ar[r]^{}&C\ar@{-->}[r]^{\delta}&}$$ with $X\in\X, K\in\widehat{\X_{n-1}}$, id$_{\mathcal{Y}}(K)\leq \alpha$ by inductive hypothesis. For any $Y\in \mathcal{Y}$, applying ${\rm Hom}_{\mathcal{C}}(X,-)$ to the $\E$-triangle $\delta$, we obtain an exact sequence
$$\E^{i}(Y, K)\rightarrow\E^{i}(Y, X)\rightarrow\E^{i}(Y, C)\rightarrow\E^{i+1}(Y, K).$$ Therefore $\E^{i}(Y, C)=0$ for $i>\alpha$ as id$_{\mathcal{Y}}(K)\leq\alpha.$ So we get id$_{\mathcal{Y}}(\mathcal{\widehat{\X}})$$\leq$id$_{\mathcal{Y}}(\mathcal{\X}).$
\end{proof}

The following result shows that there is a unique $\X$-injective cogenerator for $\X$ (in case it exists).
\begin{prop}\label{prop1}
Let $\X$ and $\W$ be two subcategories of $\C$ such that $\W$ is $\X$-injecive. Then the following statements hold.

(1) $\widehat{\W}$ is $\X$-injective.

(2) If $\W$ is a cogenerator for $\X$, then $\W=\X\bigcap\X^{\perp}=\X\bigcap\widehat{\W}$.

(3) If $\W$ is a cogenerator for $\X$, then $\widehat{\W}=\widehat{\X}\bigcap\X^{\perp}$.
\end{prop}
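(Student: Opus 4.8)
The plan is to dispatch (1) directly from Lemma~\ref{lem8}, to prove (2) by a splitting argument, and to reduce the only substantial inclusion in (3) to the approximation triangle furnished by Theorem~\ref{thm2}. For (1), I would apply Lemma~\ref{lem8} with its two subcategories taken to be $\W$ and $\X$, which yields $\mathrm{id}_{\X}(\widehat{\W})=\mathrm{id}_{\X}(\W)$; the right-hand side is $0$ because $\W$ is $\X$-injective by hypothesis, so $\widehat{\W}$ is $\X$-injective. I would record the equivalent reformulation $\widehat{\W}\subseteq\X^{\perp}$ for repeated use below.

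For (2), the inclusion $\W\subseteq\X\cap\X^{\perp}$ is immediate: $\W\subseteq\X$ is part of the cogenerator hypothesis, and $\W\subseteq\X^{\perp}$ is exactly $\mathrm{id}_{\X}(\W)=0$. For the reverse inclusion I would take $M\in\X\cap\X^{\perp}$; since $\W$ is a cogenerator for $\X$ there is an $\E$-triangle $M\to W\to X'$ with $W\in\W$ and $X'\in\X$, classified by some $\delta\in\E(X',M)$. Because $M\in\X^{\perp}$ and $X'\in\X$ we have $\delta=0$, so the triangle splits, $M$ is a direct summand of $W\in\W$, and hence $M\in\W$ as subcategories are closed under direct summands. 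This proves $\W=\X\cap\X^{\perp}$. The second equality follows formally: $\W\subseteq\X\cap\widehat{\W}$ is clear, while $\X\cap\widehat{\W}\subseteq\X\cap\X^{\perp}=\W$ using $\widehat{\W}\subseteq\X^{\perp}$ from (1).

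For (3), the inclusion $\widehat{\W}\subseteq\widehat{\X}\cap\X^{\perp}$ is immediate from $\W\subseteq\X$ (giving $\widehat{\W}\subseteq\widehat{\X}$) together with (1) (giving $\widehat{\W}\subseteq\X^{\perp}$). The content is the reverse inclusion. Given $C\in\widehat{\X}\cap\X^{\perp}$, and using that $\X$ is closed under extensions so that Theorem~\ref{thm2} applies, I obtain an $\E$-triangle $Y_{C}\to X_{C}\to C$ with $X_{C}\in\X$ and $Y_{C}\in\widehat{\W}$. By (1) we have $Y_{C}\in\X^{\perp}$, and $C\in\X^{\perp}$ by assumption; since $\X^{\perp}$ is closed under extensions (apply the first long exact sequence of Lemma~\ref{lem4} to this triangle against an arbitrary $X\in\X$), the middle term satisfies $X_{C}\in\X^{\perp}$. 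Thus $X_{C}\in\X\cap\X^{\perp}=\W$ by (2), and the triangle $Y_{C}\to X_{C}\to C$ with $X_{C}\in\W$ and $Y_{C}\in\widehat{\W}$ exhibits $C$ as an object of $\widehat{\W}$.

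The main obstacle is exactly this reverse inclusion in (3): the approximation of Theorem~\ref{thm2} only guarantees $X_{C}\in\X$, and the whole argument hinges on upgrading it to $X_{C}\in\W$. The device that makes this work is that $\X^{\perp}$ is extension-closed, so that both outer terms of the approximation triangle lying in $\X^{\perp}$ force the middle term into $\X^{\perp}$, after which the identification $\W=\X\cap\X^{\perp}$ from (2) closes the loop. Everything else is formal bookkeeping, with (2) itself resting only on the vanishing of $\E(X',M)$ when $M\in\X^{\perp}$ and $X'\in\X$.
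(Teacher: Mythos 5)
Your proof is correct and follows essentially the same route as the paper: (1) via Lemma \ref{lem8}, (2) via the splitting of the cogenerator triangle for an object of $\X\cap\X^{\perp}$, and (3) by upgrading the middle term $X_{C}$ of the approximation triangle from Theorem \ref{thm2} to an object of $\W$ using extension-closure of $\X^{\perp}$. You are in fact slightly more careful than the paper in flagging that Theorem \ref{thm2} requires $\X$ to be closed under extensions, a hypothesis not stated in the proposition itself.
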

\begin{proof}
(1) It follows from the Lemma \ref{lem8}.

(2) Let $X\in \X\bigcap\X^{\perp}$. We have an $\E$-triangle
$$\xymatrix{X\ar[r]^{}&W\ar[r]^{y}&X'\ar@{-->}[r]^{\theta}&}$$ with $X'\in \X$ and $W\in\W$. Moreover $X\in \X^{\bot}$ implies that the $\E$-triangle $\theta$ splits and so $X\in \W$. On the other hand, it is easy to see $\W\subseteq\X\bigcap\widehat{\W}$. Since id$_{\X}(\widehat{\W})=0$, it follows that $\X\bigcap\widehat{\W}\subseteq \X\bigcap\X^{\perp}$. Hence $\W=\X\bigcap\widehat{\W}.$

(3) Let $C\in \widehat{\X}\bigcap\X^{\perp}$. We have an $\E$-triangle
$$\xymatrix{Y_{C}\ar[r]^{}&X_{C}\ar[r]^{}&C\ar@{-->}[r]^{\theta}&}$$ with $Y_{C}\in \widehat{\W}$ and $X_{C}\in\X$ by Theorem \ref{thm2}. Since $Y_{C}$ and $C$ are in $\X^{\perp}$, it follows that $X_{C}\in \X\bigcap\X^{\perp}$. Hence $X_{C}\in \W$ by $(2)$, implying $\widehat{\X}\bigcap\X^{\bot}\subseteq\widehat{\W}$. On the other hand, it is obvious that $\widehat{\W}\subseteq\widehat{\X}\bigcap\X^{\perp}.$ Therefore $\widehat{\W}=\widehat{\X}\bigcap\X^{\perp}$.
\end{proof}

\begin{prop}\label{prop2}
Let $\X$ be closed under extensions such that $\W$ is an $\X$-injective cogenerator for $\X$. Then $\widehat{\X}$ is closed under extensions, and hence an extriangulated category.

\end{prop}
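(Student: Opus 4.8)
The plan is to separate the two assertions. The clause ``hence an extriangulated category'' is automatic: by \cite{NP} an extension-closed subcategory of an extriangulated category inherits $\E$ and $\mathfrak{s}$ by restriction and is again extriangulated, so once I know $\widehat{\X}$ is extension-closed there is nothing further to prove. Thus the whole content is to show that for any $\E$-triangle $A\to B\to C$ (with class $\delta\in\E(C,A)$) and $A,C\in\widehat{\X}$ one has $B\in\widehat{\X}$. I will aim for the sharper bound $\mathrm{resdim}_{\X}(B)\le\max\{\mathrm{resdim}_{\X}(A),\mathrm{resdim}_{\X}(C)\}$, which gives membership in $\widehat{\X}$ at once.

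The backbone is a pair of dimension-shifting inequalities along $\E$-triangles $A'\to B'\to C'$, namely $\mathrm{resdim}_{\X}(B')\le\max\{\mathrm{resdim}_{\X}(A'),\mathrm{resdim}_{\X}(C')\}$ (middle bound) and $\mathrm{resdim}_{\X}(C')\le\max\{\mathrm{resdim}_{\X}(B'),\mathrm{resdim}_{\X}(A')+1\}$ (quotient bound), which I would prove simultaneously by induction on the resolution dimensions of the outer terms. The inductive step manufactures syzygies and splices them: starting from a resolution step $K\to X\to C'$ with $X\in\X$ and $\mathrm{resdim}_{\X}(K)=\mathrm{resdim}_{\X}(C')-1$, I apply Lemma \ref{lem1}(1) to this triangle and to the given one (which share the last object $C'$) to produce two comparison $\E$-triangles, and then invoke $(ET4)^{\mathrm{op}}$ to compose the two resulting deflations onto a single object of $\X$ and read off the new, lower-dimensional syzygy. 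Since every term that enters has strictly smaller resolution dimension, the two inequalities feed one another and the induction closes.

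The main obstacle—and the only place where the hypotheses on $\W$ enter—is that the naive relative horseshoe fails: to assemble an $\X$-cover of $B$ out of $\X$-covers $X_C\to C$ and $X_A\to A$ one must lift $X_C\to C$ through $B\to C$, and the obstruction lives in $\E(X_C,A)$, which need not vanish for a general $A\in\widehat{\X}$. I would kill this obstruction by \emph{first} pushing $\delta$ out along the $\widehat{\W}$-preenvelope $\psi^{A}\colon A\to Y^{A}$ furnished by Theorem \ref{thm2}: applying Lemma \ref{lem3} to $\delta$ and $\psi^{A}$ yields an $\E$-triangle $Y^{A}\to E\to C$ realizing $\psi^{A}_{*}\delta$ together with an $\E$-triangle $A\to Y^{A}\oplus B\to E$. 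By Proposition \ref{prop1}(3) one has $Y^{A}\in\widehat{\W}=\widehat{\X}\cap\X^{\perp}$, so every obstruction group $\E(X',Y^{A})$ with $X'\in\X$ vanishes; hence the comparison triangles split where needed and the horseshoe does go through for $Y^{A}\to E\to C$, giving $E\in\widehat{\X}$ with $\mathrm{resdim}_{\X}(E)\le\max\{\mathrm{resdim}_{\X}(Y^{A}),\mathrm{resdim}_{\X}(C)\}$. Feeding $A\in\widehat{\X}$ and $E\in\widehat{\X}$ into the middle bound applied to $A\to Y^{A}\oplus B\to E$ places $Y^{A}\oplus B$ in $\widehat{\X}$, and therefore its direct summand $B$ lies in $\widehat{\X}$ as well. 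The delicate point I expect to have to be careful about is the bookkeeping that keeps all resolution dimensions strictly decreasing through the pushout and the $(ET4)^{\mathrm{op}}$ compositions, so that the simultaneous induction underlying the two inequalities is genuinely well-founded.
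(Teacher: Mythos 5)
Your reduction step is fine as far as it goes: pushing $\delta$ out along the $\widehat{\W}$-preenvelope $\psi^{A}\colon A\to Y^{A}$ via Lemma \ref{lem3} is legitimate, and Proposition \ref{prop1}(3) does give $Y^{A}\in\widehat{\W}=\widehat{\X}\cap\X^{\perp}$. The fatal problem is the last step, where you pass from $Y^{A}\oplus B\in\widehat{\X}$ to $B\in\widehat{\X}$. Closure of $\widehat{\X}$ under direct summands is \emph{not} available here: the paper only establishes it in Proposition \ref{cor1}, under the additional hypothesis that $\X$ is closed under CoCones (which Proposition \ref{prop2} does not assume), and the proof of Proposition \ref{cor1} relies on Lemma \ref{lem9} and Theorem \ref{thm3}, both of which logically depend on Proposition \ref{prop2} itself. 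So this step is simultaneously circular and outside the stated hypotheses. The obvious repair --- using the other pushout triangle $B\to E\to X^{A}$ with $X^{A}\in\X$ to recover $B$ as a CoCone --- runs into the same wall, since ``CoCone of a deflation onto an object of $\X$ stays in $\widehat{\X}$'' is exactly Lemma \ref{lem9} and again needs $\X$ closed under CoCones. A second, smaller gap: the ``horseshoe'' for $Y^{A}\to E\to C$ is asserted rather than proved. Running Lemma \ref{lem1}(1) against a resolution step $L\to X_{0}\to C$ and using $\E(X_{0},Y^{A})=0$ yields an $\E$-triangle $L\to Y^{A}\oplus X_{0}\to E$ whose middle term lies only in $\widehat{\X}$, not in $\X$, so you do not directly read off an $\X$-resolution of $E$; closing that induction requires essentially the full argument you were trying to avoid.

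The paper sidesteps both issues by working with the $\X$-precover triangle rather than the $\widehat{\W}$-preenvelope: it takes $Y_{A}\to X_{A}\xrightarrow{p}A$ with $X_{A}\in\X$ and $Y_{A}\in\widehat{\W}\subseteq\X^{\perp}$ from Theorem \ref{thm2}(2), observes that $\E(X,p)\colon\E(X,X_{A})\to\E(X,A)$ is an isomorphism for every $X\in\X$, lifts the extension class $\delta$ (or, in the inductive step over $\mathrm{resdim}_{\X}(C)$, the class of the intermediate triangle produced by Lemma \ref{lem1}) along $p$, and then applies $(\mathrm{ET4})$/$(\mathrm{ET4})^{\mathrm{op}}$ to exhibit $B$ directly as the Cone of a map from an object of $\widehat{\W}$ into an object of $\X$. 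That construction produces an $\X$-resolution of $B$ by splicing, with no appeal to direct summands or to CoCone-closure. I recommend you rebuild your argument around the precover triangle $Y_{A}\to X_{A}\to A$ instead of the preenvelope $A\to Y^{A}$.
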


\begin{proof}
Suppose $\xymatrix{A\ar[r]^{d}&B\ar[r]^{e}&C\ar@{-->}[r]^{\delta}&}$ is an $\E$-triangle with $A$ and $C$ in $\widehat{\X}$. Proceed by induction on $n:=$resdim$_{\X}(C)$. Assume $n=0$, which means that $C$ is in $\X$. As $A$ is in $\widehat{\X}$, there exists an $\E$-triangle
$$\xymatrix{Y_{A}\ar[r]^{}&X_{A}\ar[r]^{p}&A\ar@{-->}[r]^{\xi}&}$$ with $X_{A}\in \X$ and $Y_{A}\in\widehat{\W_{n-1}}$ by Theorem \ref{thm2}. Since $\widehat{\W}\subseteq\X^{\perp}$, it follows that $\E(C, p): \E(C, X_{A})\rightarrow \E(C, A)$ is an isomorphism. Hence we obtain a commutative diagram
$$\xymatrix@C=2em{X_{A}\ar[r]^x\ar[d]^p&Z\ar[r]^y\ar[d]^g&C\ar@{-->}[r]^{\theta}\ar@{=}[d]&\\
  A\ar[r]^d&B\ar[r]^e&C\ar@{-->}[r]^{\delta}&
  }$$ with $p_{*}\theta=\delta$. By $(ET4)$, we have the following commutative diagram
  $$\xymatrix{Y_{A}\ar[r]^{}\ar@{=}[d]&X_{A}\ar[r]^{p}\ar[d]_{x}&A\ar[d]^{d}&\\
Y_{A}\ar[r]^{}&Z\ar[r]^{}\ar[d]_{y}&B\ar[d]^{e}&\\
&C\ar@{=}[r]&C.&&&}$$
Since $X_{A}$ and $C$ are in $\X$, the object $Z\in \X$ as $\X$ is closed under extensions. Note that $Y_{A}\in \widehat{\W}$, hence in $\widehat{\X}$, it follows that $B\in \widehat{\X}$.

Assume $n>0$ and let $\xymatrix{L\ar[r]^{}&X_{0}\ar[r]^{y}&C\ar@{-->}[r]^{}&}$ be an $\E$-triangle with resdim$_{\X}(L)=n-1$. By Lemma \ref{lem1}, we have a commutative diagram
$$\xymatrix{
    & L\ar[d]_{} \ar@{=}[r] & L \ar[d]^{} \\
  A \ar@{=}[d] \ar[r]^{} & V' \ar[d]_{} \ar[r]^{} & X_{0}\ar[d]^{} \\
  A \ar[r]^{} & B\ar[r]^{} & C.   }
  $$
Since $X_{0}\in\X$, it follows that $\E(X_{0},p): \E(X_{0}, X_{A})\rightarrow \E(X_{0}, A)$ is an isomorphism, using (ET4), we obtain a commutative diagram
  $$\xymatrix{Y_{A}\ar[r]^{}\ar@{=}[d]&X_{A}\ar[r]^{p}\ar[d]_{}&A\ar[d]^{}&\\
Y_{A}\ar[r]^{}&V\ar[r]^{}\ar[d]_{}&V'\ar[d]^{}&\\
&X_{0}\ar@{=}[r]&X_{0}.&&&}$$
Using $(ET4)^{op}$ again, we also have the following commutative diagram
 $$\xymatrix{Y_{A}\ar[r]^{}\ar@{=}[d]&U\ar[r]^{p}\ar[d]_{x}&L\ar[d]^{d}&\\
Y_{A}\ar[r]^{}&V\ar[r]^{}\ar[d]_{}&V'\ar[d]^{}&\\
&B\ar@{=}[r]&B.&&&}$$
 By inductive hypothesis, $U$ is in $\widehat{\X}$. Since $X_{A}$ and $X_{0}$ are in $\X$, $V\in \X$, as $\X$ is closed under extensions. It follows that $B\in \widehat{\X}$. Hence $\widehat{\X}$ is also an extriangulated category by \cite[Remark 2.18]{NP}.
\end{proof}

\begin{lem}\label{lem9}
Let $\X$ be closed under extensions and CoCones such that $\W$ is an $\X$-injective cogenerator for $\X$. Giving an $\E$-triangle
$\xymatrix{K\ar[r]^{x}&X\ar[r]^{y}&C\ar@{-->}[r]^{\delta}&}$ with $X\in\X$. Then $C\in \widehat{\X}$ if and only if $K\in \widehat{\X}$.
\end{lem}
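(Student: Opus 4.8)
The plan is to prove the two implications separately, treating ``$K\in\widehat{\X}\Rightarrow C\in\widehat{\X}$'' as the routine direction and concentrating the work on the converse.

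For the direction $K\in\widehat{\X}\Rightarrow C\in\widehat{\X}$, I would argue by splicing. If $K\in\widehat{\X_{m}}$, there is an $\E$-triangle sequence $X_{m}\to\cdots\to X_{0}\to K$ with each $X_{i}\in\X$. Composing the deflation $X_{0}\to K$ with $x\colon K\to X$ in the given $\E$-triangle $\xymatrix{K\ar[r]^{x}&X\ar[r]^{y}&C\ar@{-->}[r]^{\delta}&}$ and using $X\in\X$, I obtain an $\E$-triangle sequence $X_{m}\to\cdots\to X_{0}\to X\to C$ with all terms in $\X$, so $C\in\widehat{\X_{m+1}}\subseteq\widehat{\X}$. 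This implication uses only $X\in\X$, and I will reuse it as a black box at the end of the converse.

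For the converse $C\in\widehat{\X}\Rightarrow K\in\widehat{\X}$, I would first pick, since $C\in\widehat{\X}$, an $\E$-triangle $\xymatrix{C'\ar[r]&X_{0}\ar[r]&C\ar@{-->}[r]&}$ with $X_{0}\in\X$ and $C'\in\widehat{\X}$ (taking $C'=0$ when $C\in\X$). Applying Lemma \ref{lem1}(1) to this triangle and to $\xymatrix{K\ar[r]^{x}&X\ar[r]^{y}&C\ar@{-->}[r]^{\delta}&}$, both ending in $C$, yields an object $M$ with two $\E$-triangles $\xymatrix{C'\ar[r]&M\ar[r]&X\ar@{-->}[r]&}$ and $\xymatrix{K\ar[r]&M\ar[r]&X_{0}\ar@{-->}[r]&}$. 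From the first, $C'\in\widehat{\X}$ and $X\in\X\subseteq\widehat{\X}$ give $M\in\widehat{\X}$, since $\widehat{\X}$ is closed under extensions by Proposition \ref{prop2}. It then remains to extract $K$ from the second triangle, and this is where closure under CoCones is essential. Because $M\in\widehat{\X}$, I would choose an $\E$-triangle $\xymatrix{M'\ar[r]&X_{M}\ar[r]&M\ar@{-->}[r]&}$ with $X_{M}\in\X$ and $M'\in\widehat{\X}$. Now the deflations $X_{M}\to M$ and $M\to X_{0}$ are composable, so applying (ET4)$^{\rm op}$ to $X_{M}\to M\to X_{0}$ produces an object $E$ with $\E$-triangles $\xymatrix{E\ar[r]&X_{M}\ar[r]&X_{0}\ar@{-->}[r]&}$ and $\xymatrix{M'\ar[r]&E\ar[r]&K\ar@{-->}[r]&}$. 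Since $X_{M},X_{0}\in\X$ and $\X$ is closed under CoCones, the first triangle forces $E\in\X$; then the second triangle has $E\in\X$ in the middle and $M'\in\widehat{\X}$, so the easy implication established above gives $K\in\widehat{\X}$.

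The main obstacle I expect is bookkeeping in the two diagram constructions rather than any conceptual difficulty: namely reading off correctly which $\E$-triangles Lemma \ref{lem1}(1) and (ET4)$^{\rm op}$ emit, verifying that the two relevant deflations genuinely compose, and checking that $E$ sits as the claimed CoCone. The single indispensable hypothesis is closure of $\X$ under CoCones, used only to force $E\in\X$; everything else rests on closure of $\widehat{\X}$ under extensions (Proposition \ref{prop2}) and on the splicing argument of the easy direction.
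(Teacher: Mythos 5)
Your proof is correct, but it takes a genuinely different and noticeably shorter route than the paper's. Both directions check out: the splicing argument for $K\in\widehat{\X}\Rightarrow C\in\widehat{\X}$ is exactly what the paper means by ``by definition''; and in the converse, Lemma \ref{lem1}(1) applied to the two $\E$-triangles $C'\to X_0\to C$ and $K\to X\to C$ does produce $M$ with $\E$-triangles $C'\to M\to X$ and $K\to M\to X_0$, Proposition \ref{prop2} gives $M\in\widehat{\X}$, and (ET4)$^{\rm op}$ applied to the composable deflations $X_M\to M\to X_0$ emits precisely the two $\E$-triangles $E\to X_M\to X_0$ and $M'\to E\to K$ you claim, so $E\in\X$ by CoCone-closure and $K\in\widehat{\X}$ by your easy direction. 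The paper instead first invokes Theorem \ref{thm2} to replace the resolution of $C$ by one of the special form $Y_C\to X_C\to C$ with $Y_C\in\widehat{\W}$, lifts the morphism $K\to Y_C$ through $W\to Y_C$, uses the dual of Lemma \ref{lem3} to build an $\E$-triangle $V\to K\oplus W\to Y_C$, identifies a cone with $X_C$ via a large (ET4) diagram to conclude $V\in\X$, deduces $K\oplus W\in\widehat{\X}$ from Proposition \ref{prop2}, and finally peels off the summand $W$ with one more application of (ET4)$^{\rm op}$. What your version buys is economy and transparency: the cogenerator $\W$ enters only through the hypothesis of Proposition \ref{prop2}, the single use of CoCone-closure is isolated in the step $E\in\X$, and the direct-sum bookkeeping and the isomorphism argument replacing $H$ by $X_C$ are avoided entirely. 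The paper's version, by contrast, keeps explicit control of the $\widehat{\W}$-data attached to $C$, in the spirit of Theorem \ref{thm2}, but none of that extra structure is needed for the statement being proved.
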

\begin{proof}
By definition, if $K\in \widehat{\X}$, so is $C \in\widehat{\X}$. Now assume that $C\in \widehat{\X}$. By Theorem \ref{thm2}, we obtain a commutative diagram
$$\xymatrix@C=2em{K\ar[r]^x\ar[d]^f&X\ar[r]^y\ar[d]^g&C\ar@{-->}[r]^{\delta}\ar@{=}[d]&\\
  Y_{C}\ar[r]^l&X_{C}\ar[r]^p&C\ar@{-->}[r]^{f_{*}\delta}&
  }$$ with $X_{C}\in \X$ and $Y_{C}\in \widehat{\W}$. Since  $Y_{C}\in \widehat{\W}$, there exists an $\E$-triangle
$$\xymatrix{L\ar[r]^{m}&W\ar[r]^{e}&Y_{C}\ar@{-->}[r]^{\delta'}&}$$ with $W\in\W$.
 We also have a commutative diagram
$$\xymatrix@C=2em{L\ar[r]^u\ar@{=}[d]&V\ar[r]^v\ar[d]^h&K\ar@{-->}[r]^{f^{*}\delta'}\ar[d]^{f}&\\
  L\ar[r]^m&W\ar[r]^e&Y_{C}\ar@{-->}[r]^{\delta'}&
  .}$$
By the dual of Lemma \ref{lem3}, there exists an $\E$-triangle $\xymatrix@C=1,2cm{V\ar[r]^{\tiny\begin{bmatrix}-v\\h\end{bmatrix}\ \ \ }&K\oplus W\ar[r]^{\tiny\ \ \begin{bmatrix}f&e\end{bmatrix}}&Y_{C}\ar@{-->}[r]^{u_{*}\delta'}&}$. By $(ET4)$, we obtain a commutative diagram
$$\xymatrix@C=3em{
  V\ar[r]^{\tiny\begin{bmatrix}-v\\h\end{bmatrix}} \ar@{=}[d] &K\oplus W \ar[r]^{\tiny\begin{bmatrix}f&e\end{bmatrix}} \ar[d]^{\tiny\begin{bmatrix}x&0\\0&1\end{bmatrix}}& Y_{C} \ar[d]^{} \ar@{-->}[r]^{u_{*}\delta'} &  \\
  V\ar[r]^{} & X\oplus W \ar[r]^{} \ar[d]^{\tiny\begin{bmatrix}y&0\end{bmatrix}} & H\ar@{-->}[r]^{} \ar[d]^{} &\\
  & C \ar@{-->}[d]^{\delta\oplus 0} \ar@{=}[r] &C\ar@{-->}[d]^{\tiny\begin{bmatrix}f&e\end{bmatrix}_*(\delta\oplus 0)}&. \\
  &&&}\eqno{(*)}
$$
Since $\tiny\begin{bmatrix}f&e\end{bmatrix}_*(\delta\oplus0)=f_{*}\delta$, we obtain a commutative diagram
$$\xymatrix@C=2em{Y_{C}\ar[r]^{}\ar@{=}[d]&H\ar[r]^{}\ar[d]^{}&C\ar@{-->}[r]^{f_{*}\delta}\ar@{=}[d]&\\
  Y_{C}\ar[r]^{l}&X_{0}\ar[r]^{p}&C\ar@{-->}[r]^{f_{*}\delta}&
  .}$$ It follows that $H$ is isomorphic to $X_{C}$. Hence we can replace $H$ by $X_{C}$ in $(*)$ and have the commutative diagram
  $$\xymatrix@C=3em{
  V\ar[r]^{\tiny\begin{bmatrix}-v\\h\end{bmatrix}} \ar@{=}[d] &K\oplus W \ar[r]^{\tiny\begin{bmatrix}f&e\end{bmatrix}} \ar[d]^{\tiny\begin{bmatrix}x&0\\0&1\end{bmatrix}}& Y_{C} \ar[d]^{l} \ar@{-->}[r]^{u_{*}\delta'} &  \\
  V\ar[r]^{} & X\oplus W \ar[r]^{} \ar[d]^{\tiny\begin{bmatrix}y&0\end{bmatrix}} & X_{C}\ar@{-->}[r]^{} \ar[d]^{p} &\\
  & C \ar@{-->}[d]^{\delta\oplus 0} \ar@{=}[r] &C\ar@{-->}[d]^{f_{*}\delta}&. \\
  &&&}
$$ Since $W$ and $X$ are both in $\X$, it follows that $V$ is in $\X$ as $\X$ is closed under Cocones. By Proposition \ref{prop2}, $K\oplus W\in \widehat{\X}.$ So there exists an $\E$-triangle
$$\xymatrix{Y_{K\oplus W}\ar[r]^{}&X_{K\oplus W}\ar[r]^{}&K\oplus W\ar@{-->}[r]^{}&}$$ with $Y_{K\oplus W}\in\widehat{\W}, X_{K\oplus W}\in\X$ by Theorem \ref{thm2}. By $(ET4)^{op}$, there is a commutative diagram
$$\xymatrix{Y_{K\oplus W}\ar[r]^{}\ar@{=}[d]&Z\ar[r]^{}\ar[d]_{}&W\ar[d]^{}&\\
Y_{K\oplus W}\ar[r]^{}&X_{K\oplus W}\ar[r]^{}\ar[d]_{}&K\oplus W\ar[d]^{}&\\
&K\ar@{=}[r]&K&&&}$$ implying $Z\in \widehat{\X}$ as $Y_{K\oplus W}$ and $W$ are in $\widehat{\X}$. Hence $K\in \widehat{\X}$. This completes the proof of the lemma.
\end{proof}

\begin{prop}\label{prop3}
Let $\X$ be closed under extensions such that $\W$ is an $\X$-injective cogenerator for $\X$. Then pd$_{\widehat{\W}}(C)=$pd$_{\W}(C)=$resdim$_{\X}(C)$ for any $C\in \widehat{\X}$.
\end{prop}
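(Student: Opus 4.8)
The plan is to establish the two‑sided chain
$${\rm pd}_{\W}(C)\ \le\ {\rm pd}_{\widehat{\W}}(C)\ \le\ {\rm resdim}_{\X}(C)\ \le\ {\rm pd}_{\W}(C),$$
so that all three quantities coincide. Since $C\in\widehat{\X}$, the integer $n:={\rm resdim}_{\X}(C)$ is finite, which legitimizes the induction used in the last inequality. The leftmost inequality is immediate from $\W\subseteq\widehat{\W}$: any $\E^{i}(C,-)$ vanishing on $\widehat{\W}$ vanishes a fortiori on $\W$.

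For the middle inequality I would apply Theorem \ref{thm1}(1), reading its generic pair $(\mathcal{X},\mathcal{Y})$ as $(\widehat{\W},\X)$ and its object $L$ as $C$, to get
$${\rm pd}_{\widehat{\W}}(C)\ \le\ {\rm pd}_{\widehat{\W}}(\X)+{\rm resdim}_{\X}(C).$$
By Lemma \ref{lem5} one has ${\rm pd}_{\widehat{\W}}(\X)={\rm id}_{\X}(\widehat{\W})$, and Proposition \ref{prop1}(1) asserts precisely that $\widehat{\W}$ is $\X$‑injective, i.e. ${\rm id}_{\X}(\widehat{\W})=0$. Hence ${\rm pd}_{\widehat{\W}}(C)\le{\rm resdim}_{\X}(C)$, and the two projective dimensions are squeezed below $n$.

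The remaining and essential point is ${\rm resdim}_{\X}(C)\le{\rm pd}_{\W}(C)$, for which I would prove the sharper nonvanishing claim: for every $D\in\widehat{\X}$ with $k:={\rm resdim}_{\X}(D)\ge 1$ there is some $W'\in\W$ with $\E^{k}(D,W')\ne0$ (this forces ${\rm pd}_{\W}(D)\ge k$). The argument runs by induction on $k$. Theorem \ref{thm2} supplies an $\E$‑triangle $\xymatrix{Y_{D}\ar[r]&X_{D}\ar[r]&D\ar@{-->}[r]^{\delta}&}$ with $X_{D}\in\X$ and $Y_{D}\in\widehat{\W_{k-1}}$; as $\W\subseteq\X$ this gives ${\rm resdim}_{\X}(Y_{D})\le k-1$, while minimality of $k$ together with the triangle forces ${\rm resdim}_{\X}(Y_{D})=k-1$. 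For $k=1$ we have $Y_{D}\in\W$ and $\delta\in\E(D,Y_{D})=\E^{1}(D,Y_{D})$; if $\delta=0$ the triangle would split and exhibit $D$ as a direct summand of $X_{D}\in\X$, contradicting ${\rm resdim}_{\X}(D)=1$, so $\delta\ne0$ and the claim holds. For $k\ge2$ I would run the long exact sequence of Lemma \ref{lem4} for $\E^{\bullet}(-,W)$ on this triangle; since $X_{D}\in\X$ and ${\rm id}_{\X}(\W)=0$, the groups $\E^{i}(X_{D},W)$ vanish for all $i\ge1$, so the connecting maps yield isomorphisms $\E^{i}(Y_{D},W)\cong\E^{i+1}(D,W)$ for $i\ge1$. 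Taking $i=k-1$ and the $W'$ provided by the inductive hypothesis for $Y_{D}$ gives $\E^{k}(D,W')\cong\E^{k-1}(Y_{D},W')\ne0$, closing the induction.

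The step I expect to demand the most care is verifying that ${\rm resdim}_{\X}(Y_{D})$ equals \emph{exactly} $k-1$ rather than merely being $\le k-1$: the induction must act at the precise degree where the extension group is nonzero, so I must splice the $\X$‑resolution of $Y_{D}$ with the triangle $Y_{D}\to X_{D}\to D$ and invoke the minimality of $k$ to exclude a shorter resolution. The second delicate point is the use of ${\rm id}_{\X}(\W)=0$ to kill \emph{both} $\E^{i}(X_{D},W)$ and $\E^{i+1}(X_{D},W)$ in the relevant range, which is what upgrades the connecting maps from one‑sided maps to genuine isomorphisms; this is exactly where the hypothesis that $\W$ is $\X$‑injective enters essentially.
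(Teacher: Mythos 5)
Your proposal is correct and follows essentially the same route as the paper: the upper bound comes from Theorem \ref{thm1}(1) together with the $\X$-injectivity of $\W$ (resp.\ $\widehat{\W}$), and the lower bound ${\rm resdim}_{\X}(C)\le{\rm pd}_{\W}(C)$ is proved by induction on the resolution dimension, with the same splitting argument in the base case and the same use of ${\rm id}_{\X}(\W)=0$ to turn the connecting maps into isomorphisms in the inductive step. The only cosmetic differences are that you close a cyclic chain of inequalities where the paper invokes Lemma \ref{lem8} to identify ${\rm pd}_{\widehat{\W}}(C)$ with ${\rm pd}_{\W}(C)$ directly, and that you take the resolving triangle from Theorem \ref{thm2} rather than a generic one.
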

\begin{proof}
Let $C\in \widehat{\X}$, pd$_{\W}(C)=$id$_{\{C\}}(\W)=$id$_{\{C\}}(\widehat{\W})$ by Lemma \ref{lem5} and \ref{lem8}. To prove the last equality, we proceed by induction on $n=:$resdim$_{\X}(C)$. If $n=0$, then $C\in \X$ and pd$_{\W}(C)$=resdim$_{\X}(C)=0$

Let $n=1$. Then we have an $\E$-triangle$$\xymatrix{Y_{C}\ar[r]^{}&X_{C}\ar[r]^{}&C\ar@{-->}[r]^{\delta}&}$$ with $X_{C}\in\X$ and $Y_{C}\in \W$ by Theorem \ref{thm2}. We claim that pd$_{\W}(C)>0$. Indeed, suppose pd$_{\W}(C)=0$, it follows that the $\E$-triangle $\delta$ splits. Hence $C\in\X$ contradicting that resdim$_{\X}(C)=1$. Hence pd$_{\W}(C)=1$ by Theorem \ref{thm1}.

Let $n\geq 2$. From Theorem \ref{thm1}, we have pd$_{\W}(C)\leq$resdim$_{\X}(C)=n$ as pd$_{\W}(\X)=0$. Then it is enough to prove that $\E^{n}(C, W)\neq0$ for some $W\in \W$. Consider an $\E$-triangle$$\xymatrix{K\ar[r]^{}&X_{0}\ar[r]^{}&C\ar@{-->}[r]^{\xi}&}$$ with $X_{0}\in \X$ and resdim$_{\X}(K)=n-1$. By inductive hypothesis pd$_{\W}(K)=n-1$. Applying ${\rm Hom }_{\mathcal{C}}(-, W)$ to the $\E$-triangle $\xi$ with $W\in \W$, we have an exact sequence
$$\E^{n-1}(X_{0},W)\rightarrow\E^{n-1}(K, W)\rightarrow\E^{n}(C, W).$$
Suppose that $\E^{n}(C, -)|_{\W}=0$, then $\E^{n-1}(K, -)|_{\W}=0$ as id$_{\X}(\W)=0$ and $n\geq 2;$ contradicting that pd$_{\W}(K)=n-1$.
\end{proof}

Now we give a characterizations of $\X$-resolution dimensions of objects in $\widehat{\X}$.
\begin{thm}\label{thm3}
Let $\X$ be closed under extensions and CoCones such that $\W$ is an $\X$-injective cogenerator for $\X$. The following are equivalent for any $C\in \widehat{\X}$ and nonnegative integer $n$.

(1) resdim$_{\X}(C)\leq n$.

(2) If $U\rightarrow X_{n-1}\rightarrow\cdots\rightarrow X_{0}\rightarrow C$ is an $\E$-triangle sequence with $X_{i}\in \X$ for $0\leq i\leq n-1$, then $U\in \X$.

(3) $\E^{n+i}(C, Y)=0$ for any object $Y\in \widehat{\W}$ and $i\geq1$.

(4) $\E^{n+i}(C, W)=0$ for any object $W\in \W$ and $i\geq1$.

(5) $\E^{n+1}(C, W)=0$ for any object $W\in \W$.

\end{thm}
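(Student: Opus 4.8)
The plan is to read off the homological equivalences from Proposition \ref{prop3} and to isolate $(5)\Rightarrow(1)$ as the one new point. Proposition \ref{prop3} asserts that for every $C\in\widehat{\X}$ one has $\mathrm{pd}_{\widehat{\W}}(C)=\mathrm{pd}_{\W}(C)=\mathrm{resdim}_{\X}(C)$. Unwinding the definition of relative projective dimension, the inequality $\mathrm{pd}_{\widehat{\W}}(C)\le n$ is exactly condition $(3)$ and $\mathrm{pd}_{\W}(C)\le n$ is exactly condition $(4)$, so $(1)\Leftrightarrow(3)\Leftrightarrow(4)$ comes for free; likewise $(3)\Rightarrow(4)$ and $(4)\Rightarrow(5)$ are trivial, since $\W\subseteq\widehat{\W}$ and $(5)$ only demands vanishing in the single degree $n+1$. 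Hence it remains to establish $(1)\Leftrightarrow(2)$ and to close the loop with $(5)\Rightarrow(1)$.

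For $(1)\Leftrightarrow(2)$ I would argue by dimension shifting. Given any $\E$-triangle sequence $U\to X_{n-1}\to\cdots\to X_{0}\to C$ with all $X_{i}\in\X$, iterating Lemma \ref{lem9} shows $U\in\widehat{\X}$, and feeding the defining $\E$-triangles into the long exact sequences of Lemma \ref{lem4} while using $\E^{\ge 1}(X_{i},W)=0$ (as $\W$ is $\X$-injective) produces natural isomorphisms $\E^{i}(U,W)\cong\E^{i+n}(C,W)$ for all $i\ge 1$ and $W\in\W$. If $(1)$ holds then $\mathrm{pd}_{\W}(C)\le n$ by Proposition \ref{prop3}, so the right-hand groups vanish and $\mathrm{resdim}_{\X}(U)=\mathrm{pd}_{\W}(U)=0$, i.e. $U\in\X$; this is $(2)$. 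Conversely, assuming $(2)$, I would build one such sequence by applying the $\X$-precover $\E$-triangle of Theorem \ref{thm2}$(2)$ to $C$ and then to each successive CoCone (each remaining in $\widehat{\X}$ by Lemma \ref{lem9}); after $n$ steps condition $(2)$ forces the last syzygy $U$ into $\X$, so every term of the sequence lies in $\X$ and $C\in\widehat{\X_{n}}$, which is $(1)$.

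The essential point, and the step I expect to be the main obstacle, is $(5)\Rightarrow(1)$: it says that vanishing of the \emph{single} group $\E^{n+1}(C,-)|_{\W}$ already pins down the resolution dimension. I would induct on $d:=\mathrm{resdim}_{\X}(C)$, the case $d=0$ being trivial. Taking the first step $K\to X_{0}\to C$ of a shortest $\X$-resolution (so $X_{0}\in\X$ and $\mathrm{resdim}_{\X}(K)=d-1$) and shifting dimensions gives $\E^{i}(K,W)\cong\E^{i+1}(C,W)$ for $i\ge 1$, so that hypothesis $(5)$ for $C$ in degree $n+1$ transforms into the corresponding hypothesis for $K$ in degree $n$; the induction thereby collapses everything onto the base case $n=0$. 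That base case, namely \emph{$C\in\widehat{\X}$ with $\E^{1}(C,W)=0$ for all $W\in\W$ implies $C\in\X$}, is where the real work lies. Here I would use Theorem \ref{thm2}$(2)$ to write $C$ via an $\E$-triangle $Y_{C}\to X_{C}\to C$ with $X_{C}\in\X$ and $Y_{C}\in\widehat{\W}$, then exploit that $\W$ is a cogenerator and that $\X$ is closed under CoCones to peel the $\W$-layers of $Y_{C}$ one at a time, transporting the controlling extension class until it is measured against an object of $\W$, where $\E^{1}(C,-)|_{\W}=0$ forces a splitting and exhibits $C$ as a direct summand of an object of $\X$. Checking that this reduction is legitimate, that CoCone-closure propagates membership in $\X$ back through the peeling, and that the process terminates, is the delicate heart of the proof; granting it, the cycle $(1)\Leftrightarrow(2)\Leftrightarrow(3)\Leftrightarrow(4)\Leftrightarrow(5)$ is complete.
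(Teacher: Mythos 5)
Your handling of $(1)\Leftrightarrow(3)\Leftrightarrow(4)$ via Proposition \ref{prop3}, of the trivial implications $(3)\Rightarrow(4)\Rightarrow(5)$, and of $(1)\Leftrightarrow(2)$ is sound and close to the paper's route; in fact your argument for $(1)\Rightarrow(2)$ (iterate Lemma \ref{lem9} to get $U\in\widehat{\X}$, dimension-shift to get ${\rm pd}_{\W}(U)=0$, then apply Proposition \ref{prop3} to conclude $U\in\X$) is more explicit than anything in the paper, whose stated chain of implications never actually reaches $(2)$. The problem is $(5)\Rightarrow(1)$, which you correctly isolate as the crux but do not prove. Your induction cleanly reduces it to the base case ``$C\in\widehat{\X}$ and $\E^{1}(C,W)=0$ for all $W\in\W$ imply $C\in\X$'', and there you only gesture at ``peeling the $\W$-layers of $Y_{C}$''. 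That peeling cannot be carried out from the hypothesis available: to split the $\E$-triangle $Y_{C}\to X_{C}\to C$ of Theorem \ref{thm2} you need $\E(C,Y_{C})=0$ with $Y_{C}\in\widehat{\W}$, and dimension-shifting along a $\W$-resolution $W_{m}\to\cdots\to W_{0}\to Y_{C}$ produces obstruction terms $\E^{2}(C,-),\E^{3}(C,-),\ldots$ evaluated on $\W$, none of which are controlled by the single-degree hypothesis $(5)$. (The paper's own proof has the same flaw: it asserts that $\theta$ splits ``as $\E(U,\W)=0$'', silently upgrading this to $\E(U,\widehat{\W})=0$.)

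Moreover, this is not a verification you can hope to supply later: the base case is false under the stated hypotheses. Take $\C$ to be all modules over $R=k[[x,y]]$ and $\X=\W=Proj(\C)$; every hypothesis of the theorem holds, $\widehat{\X}=\C$, and for $C=k$ one has $\E^{1}(k,W)\cong{\rm Ext}^{1}_{R}(k,R)^{(I)}=0$ for every free module $W$ (since ${\rm depth}\,R=2$), yet ${\rm resdim}_{\X}(k)={\rm pd}_{R}(k)=2$. So $(5)$ holds with $n=0$ while $(1)$ fails; concretely the approximation triangle is $\mathfrak{m}\to R\to k$ and ${\rm Ext}^{1}_{R}(k,\mathfrak{m})\cong k\neq0$, so it does not split even though ${\rm Ext}^{1}_{R}(k,-)$ vanishes on $\W$. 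Thus $(5)$ as stated is strictly weaker than $(1)$--$(4)$, and the implication can only be rescued by strengthening $(5)$, e.g.\ to vanishing of $\E^{n+1}(C,-)$ on all of $\widehat{\W}$ (with that version your argument closes immediately, since the splitting of $Y_{C}\to X_{C}\to C$ becomes automatic) or to vanishing in all degrees $\geq n+1$, which is just $(4)$. You should either prove the corrected statement or record that $(5)\Rightarrow(1)$ fails as written.
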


\begin{proof}
By Proposition \ref{prop3}, we have $(1)\Rightarrow (3)\Leftrightarrow(4)\Rightarrow(5)$. $(2)\Rightarrow(1)$ is trivial.

$(5)\Rightarrow(1)$ Since $C\in \widehat{\X}$, there is an $\E$-triangle sequence
$$U\rightarrow X_{n-1}\rightarrow\cdots\rightarrow X_{0}\rightarrow C$$ with $X_{i}\in \X$ for $0\leq i\leq n-1$. We also have $\E(U, \W)\cong \E^{n+1}(C, \W)=0$ as $\W$ is $\X$-injective. Note that $U\in \widehat{\X}$ by Lemma \ref{lem9}. Hence there exists an $\E$-triangle
$$\xymatrix{Y_{U}\ar[r]^{}&X_{U}\ar[r]^{}&U\ar@{-->}[r]^{\theta}&}$$ with $X_{U}\in \X$ and $Y_{U}\in \widehat{\W}$ by Theorem \ref{thm2}. It follows that the $\E$-triangle $\theta$ splits as $\E(U, \W)=0$. Hence $U\in \X$.
\end{proof}

\begin{prop}\label{cor1}
Let $\X$ be closed under extensions and CoCones such that $\W$ is an $\X$-injective cogenerator for $\X$. Then $\widehat{\X}$ is closed under direct summands.
\end{prop}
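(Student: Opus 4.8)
The plan is to prove the sharper statement that \emph{each} $\widehat{\X_n}$ is closed under direct summands, by induction on $n$: if $C=A\oplus B\in\widehat{\X_n}$ then $A,B\in\widehat{\X_n}$. The case $n=0$ is immediate, since $\X$ is by convention closed under direct summands. For the inductive step I would first apply Theorem \ref{thm2} to $C$ to get an $\E$-triangle $Y_C\to X_C\xrightarrow{\varphi} C$ with $X_C\in\X$ and $Y_C\in\widehat{\W_{n-1}}$. Writing $p_A\colon C\to A$ for the projection, which fits into the split $\E$-triangle $B\to C\xrightarrow{p_A}A$, I would feed the two deflations $\varphi$ and $p_A$ into $(ET4)^{\mathrm{op}}$. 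This yields an object $E_A$ sitting in two $\E$-triangles $E_A\to X_C\xrightarrow{\varphi_A}A$ (with $\varphi_A=p_A\varphi$) and $Y_C\to E_A\to B$; symmetrically one obtains $E_B$ with $E_B\to X_C\xrightarrow{\varphi_B}B$. Since $X_C\in\X$, Lemma \ref{lem9} reduces the goal $A\in\widehat{\X}$ to showing $E_A\in\widehat{\X}$, and likewise for $B$.

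The heart of the matter is to prove that $E_A\oplus E_B$ lies in $\widehat{\X_{n-1}}$, so that the induction hypothesis can be used to split it. Summing the two $\E$-triangles $E_A\to X_C\xrightarrow{\varphi_A}A$ and $E_B\to X_C\xrightarrow{\varphi_B}B$ gives an $\E$-triangle $E_A\oplus E_B\to X_C\oplus X_C\to C$ with class $\tau$, whose middle term lies in $\X$. I would compare this with the precover $\E$-triangle $Y_C\to X_C\xrightarrow{\varphi}C$ of class $\delta$ via Lemma \ref{lem1}(1) (two $\E$-triangles with common third term $C$), producing an object $\mathbb{P}$ fitting into $\E$-triangles $E_A\oplus E_B\to\mathbb{P}\to X_C$ and $Y_C\to\mathbb{P}\to X_C\oplus X_C$. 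By Proposition \ref{prop1}(1) we have $Y_C\in\widehat{\W}\subseteq\X^{\perp}$, so the class of the second triangle lies in $\E(X_C\oplus X_C,Y_C)=0$; hence it splits and $\mathbb{P}\cong Y_C\oplus X_C\oplus X_C$, which has $\mathrm{resdim}_{\X}\le n-1$ because $\widehat{\W_{n-1}}\subseteq\widehat{\X_{n-1}}$ and $X_C\in\X$.

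It then remains to transfer this bound across the first triangle, i.e.\ to establish the auxiliary fact: if $N\to M\to X_0$ is an $\E$-triangle with $X_0\in\X$ and $\mathrm{resdim}_{\X}(M)\le n-1$, then $\mathrm{resdim}_{\X}(N)\le n-1$. For this I would resolve $M$ by Theorem \ref{thm2} as $Y_M\to X_M\to M$ with $X_M\in\X$, $Y_M\in\widehat{\W}$, and apply $(ET4)^{\mathrm{op}}$ to the composite deflation $X_M\to M\to X_0$; as $X_M,X_0\in\X$ and $\X$ is closed under CoCones, the resulting CoCone $E'$ lies in $\X$ and sits in an $\E$-triangle $Y_M\to E'\to N$ with middle term in $\X$, so Lemma \ref{lem9} gives $N\in\widehat{\X}$. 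The dimension bound $\mathrm{resdim}_{\X}(N)\le n-1$ then follows from Theorem \ref{thm3}, by feeding the triangle $N\to M\to X_0$ into the long exact sequence of Lemma \ref{lem4} and using $\E^{i}(\X,\W)=0$ for $i\ge 1$ to conclude $\E^{n}(N,W)=0$ for all $W\in\W$. Applying this fact to $E_A\oplus E_B\to\mathbb{P}\to X_C$ gives $E_A\oplus E_B\in\widehat{\X_{n-1}}$, the induction hypothesis yields $E_A\in\widehat{\X_{n-1}}$, and splicing with $E_A\to X_C\xrightarrow{\varphi_A}A$ gives $A\in\widehat{\X_n}$ (and symmetrically $B\in\widehat{\X_n}$). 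The main obstacle is precisely this dimension drop: extension-closure (Proposition \ref{prop2}) alone only bounds $\mathrm{resdim}_{\X}(E_A\oplus E_B)$ by $n$, and the comparison with the precover resolution—which splits off the two copies of $X_C$ and replaces $C$ by $X_C$—is what is needed to lower it to $n-1$ and make the induction close.
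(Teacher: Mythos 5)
Your proof is correct and follows essentially the same strategy as the paper's: induct on the resolution dimension, split the deflation onto $A\oplus B$ into deflations onto each summand via $(\mathrm{ET4})^{\mathrm{op}}$, take the direct sum of the two resulting $\E$-triangles, and use Lemma \ref{lem9} together with Theorem \ref{thm3} to drop the resolution dimension of the summed CoCone to $n-1$ so that the induction closes. The only difference is that your detour through the comparison object $\mathbb{P}$ and the auxiliary dimension-shifting fact is avoidable: the paper obtains $\mathrm{resdim}_{\X}(L_1\oplus L_2)\le n-1$ directly from the summed triangle $L_1\oplus L_2\to X\oplus X\to C_1\oplus C_2$ by the same long-exact-sequence argument that you use inside your auxiliary fact.
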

\begin{proof}
Suppose $C_{1}\oplus C_{2}\in \widehat{\X}$. Proceed by induction on $n=:$resdim$_{\X}(C_{1}\oplus C_{2})$. If $n=0$, then $C_{1}$ and $C_{2}$ are in $\X$.

Suppose $n>0$. There is an $\E$-triangle
$$\xymatrix{K\ar[r]^{}&X\ar[r]^{y}&C_{1}\oplus C_{2}\ar@{-->}[r]^{}&}$$
with $X\in \X$ and resdim$_{\X}(K)=n-1$. By $(ET4)^{op}$, we obtain a commutative diagram
$$\xymatrix@C=3em{
  K\ar[r]^{} \ar@{=}[d] &L_{2} \ar[r]^{} \ar[d]^{x_{2}}& C_{1} \ar[d]^{\tiny\begin{bmatrix}1\\0\end{bmatrix}} \ar@{-->}[r]^{} &  \\
  K\ar[r]^{x} & X\ar[r]^{\tiny\begin{bmatrix}y_{1}\\y_{2}\end{bmatrix}} \ar[d]^{y_{2}} & C_{1}\oplus C_{2}\ar@{-->}[r]^{\delta} \ar[d]^{\tiny\begin{bmatrix}0&1\end{bmatrix}} &\\
  & C_{2} \ar@{-->}[d]^{\delta_{2}} \ar@{=}[r] &C_{2}\ar@{-->}[d]^{0}& \\
  &&&}
.$$ Similarly, we can obtain an $\E$-triangle $$\xymatrix{L_{1}\ar[r]^{x_{1}}&X\ar[r]^{y_{1}}&C_{1}\ar@{-->}[r]^{\delta_{1}}&}.$$
Hence there is an $\E$-triangle $$\xymatrix{L_{1}\oplus L_{2}\ar[r]^{\tiny\begin{bmatrix}x_{1}&0\\0&x_{2}\end{bmatrix}}&X\oplus X\ar[r]^{\tiny\begin{bmatrix}y_{1}&0\\0&y_{2}\end{bmatrix}}&C_{1}\oplus C_{2}\ar@{-->}[r]^{\delta_{1}\oplus\delta_{2}}&}.$$ By Lemma \ref{lem9}, $L_{1}\oplus L_{2}\in \widehat{\X}$, and Theorem \ref{thm3} shows that resdim$_{\X}(L_{1}\oplus L_{2})\leq n-1$. By inductive hypothesis, $L_{1}$ and $L_{2}$ are in $\widehat{\X}$. It follows that $C_{1}$ and $C_{2}$ are in $\widehat{\X}$ by Lemma \ref{lem9}
\end{proof}

The following result will play a key role in Section 4.
\begin{prop}\label{cor2}
Let $\X$ be closed under extensions and CoCones such that $\W$ is an $\X$-injective cogenerator for $\X$. For any $\E$-triangle $\xymatrix{A\ar[r]^{x}&B\ar[r]^{y}&C\ar@{-->}[r]^{\delta}&}$, if any two of $A, B$ and $C$ are in $\widehat{\C}$, then the third one is in $\widehat{\X}$.
\end{prop}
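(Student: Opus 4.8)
The statement asserts a two-out-of-three property for $\widehat{\X}$ along $\E$-triangles (reading the hypothesis ``$\widehat{\C}$'' as the evidently intended $\widehat{\X}$), so the plan is to split according to which of $A,B,C$ is the unknown object. The case in which $A$ and $C$ lie in $\widehat{\X}$ and one must show $B\in\widehat{\X}$ is already settled: it is precisely the extension-closedness of $\widehat{\X}$ established in Proposition \ref{prop2}. Thus the real work is to treat the two remaining cases, namely $A,B\in\widehat{\X}\Rightarrow C\in\widehat{\X}$ and $B,C\in\widehat{\X}\Rightarrow A\in\widehat{\X}$, and I would dispatch both with a single construction centered on $B$.

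The key step is to resolve $B$ on the left by an object of $\X$. Since $B\in\widehat{\X}$ in both remaining cases, Theorem \ref{thm2} supplies an $\E$-triangle $Y_B\to X_B\xrightarrow{\varphi_B}B$ with $X_B\in\X$ and $Y_B\in\widehat{\W}$ (and $\widehat{\W}\subseteq\widehat{\X}$ because $\W\subseteq\X$). Feeding this triangle together with the given triangle $A\xrightarrow{x}B\xrightarrow{y}C$ into $\mathrm{(ET4)}^{\mathrm{op}}$, with $B$ as the common object appearing as the cokernel term of the precover triangle and as the middle term of the given triangle, produces an object $E$ together with two new $\E$-triangles $Y_B\to E\to A$ and $E\to X_B\to C$. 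The decisive feature is that in the second triangle the middle term $X_B$ lies in $\X$, which is exactly the shape to which Lemma \ref{lem9} applies.

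From this one diagram both cases are read off. When $A,B\in\widehat{\X}$: the triangle $Y_B\to E\to A$ has outer terms $Y_B\in\widehat{\W}\subseteq\widehat{\X}$ and $A\in\widehat{\X}$, so $E\in\widehat{\X}$ by Proposition \ref{prop2}; then Lemma \ref{lem9} applied to $E\to X_B\to C$ (with $X_B\in\X$ and $E\in\widehat{\X}$) yields $C\in\widehat{\X}$. When $B,C\in\widehat{\X}$: Lemma \ref{lem9} applied to $E\to X_B\to C$ (with $X_B\in\X$ and $C\in\widehat{\X}$) first gives $E\in\widehat{\X}$; then the triangle $Y_B\to E\to A$ has its first two terms $Y_B,E\in\widehat{\X}$, so $A\in\widehat{\X}$ by the case already proved. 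Thus the third case bootstraps off the first.

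The main obstacle I anticipate is organizational rather than computational: setting up the $\mathrm{(ET4)}^{\mathrm{op}}$ octahedron with the correct identification of the common object $B$, and then tracking which two of the three terms in each derived triangle are known so that exactly one of Proposition \ref{prop2} and Lemma \ref{lem9} applies at each step. The subtlety worth flagging is that the more naive pullback construction via Lemma \ref{lem1} (pairing the given triangle with an $\X$-precover of $C$) produces a triangle $A\to M\to X_C$ in which the distinguished object $X_C\in\X$ sits in the \emph{cokernel} position, where Lemma \ref{lem9} does not directly apply; routing the argument through the precover of $B$ and $\mathrm{(ET4)}^{\mathrm{op}}$ instead places the $\X$-object in the \emph{middle}, which is precisely what makes Lemma \ref{lem9} usable.
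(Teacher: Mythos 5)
Your proposal is correct and follows essentially the same route as the paper: reduce to the two nontrivial cases via Proposition \ref{prop2}, resolve $B$ by Theorem \ref{thm2}, apply $\mathrm{(ET4)^{op}}$ to obtain the triangles $Y_B\to E\to A$ and $E\to X_B\to C$, and then combine extension-closedness with Lemma \ref{lem9} exactly as the paper does (the paper calls your $E$ by the name $L$). The only cosmetic difference is that you invoke the trivial direction of Lemma \ref{lem9} where the paper concludes $C\in\widehat{\X}$ directly from the definition.
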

\begin{proof}
Since we already knew that $\widehat{\X}$ is closed under extensions by Proposition \ref{prop2}, it suffices to show that if $B\in\widehat{\X}$, then $A\in \widehat{\X}$ if and only if $C\in \widehat{\X}$. We first show that if $A$ and $B$ are in $\widehat{\X}$, then $C\in \widehat{\X}$. Since $B\in \widehat{\X}$, we have an $\E$-triangle
$$\xymatrix{Y_{B}\ar[r]^{}&X_{B}\ar[r]^{}&B\ar@{-->}[r]^{}&}$$ with $X_{B}\in \X, Y_{B}\in\widehat{\W}$. By $(ET4)^{op}$, we obtain a commutative diagram
$$\xymatrix{Y_{B}\ar[r]^{}\ar@{=}[d]&L\ar[r]^{}\ar[d]_{}&A\ar[d]^{}&\\
Y_{B}\ar[r]^{}&X_{B}\ar[r]^{}\ar[d]_{}&B\ar[d]^{}&\\
&C\ar@{=}[r]&C
&&&.}$$ It follows that $L\in \widehat{\X}$ as $A$ and $Y_{B}$ are $\in \widehat{\X}$. Therefore $C\in \widehat{\X}.$

Suppose now $B$ and $C$ are $\widehat{\X}$. By Lemma \ref{lem9}, $L\in\widehat{\X}$. Applying the just established result to $\E$-triangle
$$\xymatrix{Y_{B}\ar[r]^{}&L\ar[r]^{}&A\ar@{-->}[r]^{}&}$$ gives $A\in \widehat{\X}$.
\end{proof}

We say $\X$ is resolving (coresolving, resp.) \cite{ZZ1} if it contains $Proj(\C) ~(Inj(\C), resp.)$, closed under extensions and CoCones (Cones, resp.).

\begin{cor}\label{cor3}
Let $\X$ be a resolving subcategory of $\C$ such that $\W$ is an $\X$-injective cogenerator for $\X$. If $\C$ is a Frobenius extriangulated category, then $\underline{\widehat{\X}}$ is a thick subcategory of $\underline{\C}$.
\end{cor}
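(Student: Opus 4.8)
The plan is to verify the two defining properties of a thick subcategory: that $\underline{\widehat{\X}}$ is a triangulated subcategory of $\underline{\C}$ (closed under the suspension $\Sigma$ and its inverse $\Omega$, and closed under cones), and that it is closed under direct summands. Since $\C$ is Frobenius, $\underline{\C}$ is triangulated with suspension given by the cosyzygy functor $\Sigma$, inverse given by $\Omega$, and its distinguished triangles are exactly those isomorphic to images of $\E$-triangles of $\C$. Two preliminary observations drive everything. First, as $\X$ is resolving we have $Proj(\C)=Inj(\C)\subseteq\X\subseteq\widehat{\X}$, so every projective-injective object lies in $\widehat{\X}$. Second, $\widehat{\X}$ is closed under stable isomorphism: if $M\in\widehat{\X}$ and $N\oplus Q\cong M\oplus P$ with $P,Q$ projective, then $M\oplus P\in\widehat{\X}$ (since $\widehat{\X}$ is additive and contains the projectives), so $N$, being a direct summand of an object of $\widehat{\X}$, lies in $\widehat{\X}$ by Proposition \ref{cor1}. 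Consequently $\underline{\widehat{\X}}$ is a well-defined full subcategory of $\underline{\C}$, and the same argument shows it is closed under direct summands in $\underline{\C}$.

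Next I would check closure under $\Sigma^{\pm1}$. For any $C\in\widehat{\X}$, enough injectives yields an $\E$-triangle $\xymatrix{C\ar[r]&I\ar[r]&\Sigma C\ar@{-->}[r]&}$ with $I\in Inj(\C)\subseteq\widehat{\X}$; since $C$ and $I$ lie in $\widehat{\X}$, Proposition \ref{cor2} forces $\Sigma C\in\widehat{\X}$. Dually, enough projectives gives an $\E$-triangle $\xymatrix{\Omega C\ar[r]&P\ar[r]&C\ar@{-->}[r]&}$ with $P\in Proj(\C)\subseteq\X$; as $P\in\X$ and $C\in\widehat{\X}$, Lemma \ref{lem9} yields $\Omega C\in\widehat{\X}$. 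Hence $\underline{\widehat{\X}}$ is stable under both $\Sigma$ and $\Omega$.

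For closure under cones, let $\xymatrix{A\ar[r]&B\ar[r]&C\ar[r]&\Sigma A}$ be a distinguished triangle in $\underline{\C}$ with $A,B\in\underline{\widehat{\X}}$. By the description of the triangulated structure it is isomorphic to the image of some $\E$-triangle $\xymatrix{A'\ar[r]&B'\ar[r]&C'\ar@{-->}[r]&}$ of $\C$, where $A'\cong A$ and $B'\cong B$ in $\underline{\C}$. Closure under stable isomorphism gives $A',B'\in\widehat{\X}$, so Proposition \ref{cor2} produces $C'\in\widehat{\X}$, and therefore $C\cong C'$ lies in $\underline{\widehat{\X}}$. Together with closure under $\Sigma^{\pm1}$ and the rotation of triangles, this shows $\underline{\widehat{\X}}$ is a triangulated subcategory of $\underline{\C}$, and combined with the summand closure from the first paragraph it is thick.

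The main obstacle I anticipate is the bookkeeping at the interface between the extriangulated category $\C$ and its stable category: one must confirm that membership in $\widehat{\X}$ is invariant under stable isomorphism and that an arbitrary distinguished triangle can be represented by an $\E$-triangle whose first two terms genuinely lie in $\widehat{\X}$. (If one prefers to avoid the stable-isomorphism reduction, an alternative is to lift the first map to $f\colon A\to B$ in $\C$ with $A,B\in\widehat{\X}$, fix an inflation $i_A\colon A\to I$ into an injective $I$, and realize an $\E$-triangle $\xymatrix{A\ar[r]^-{\left[\begin{smallmatrix}f\\ i_A\end{smallmatrix}\right]}&B\oplus I\ar[r]&C''\ar@{-->}[r]&}$, whose image in $\underline{\C}$ is the given triangle and whose middle term $B\oplus I$ is patently in $\widehat{\X}$.) Once these translations are secured, all the homological content is supplied by Proposition \ref{cor2} and Lemma \ref{lem9}, and no further computation is required.
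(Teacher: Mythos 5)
Your proof is correct, and it uses exactly the same homological ingredients as the paper (Proposition \ref{cor1} for summand closure, Proposition \ref{cor2} for the two-out-of-three property, Lemma \ref{lem9} and the inclusion $Proj(\C)=Inj(\C)\subseteq\X\subseteq\widehat{\X}$), but it packages them differently. The paper's own argument is a two-line delegation: it observes that $\widehat{\X}$ is itself a Frobenius extriangulated category by Proposition \ref{cor2}, and then invokes \cite[Remark 7.5]{NP} together with Proposition \ref{cor1} to conclude thickness of $\underline{\widehat{\X}}$. You instead verify the thick-subcategory axioms in $\underline{\C}$ by hand --- closure under $\Sigma$, $\Omega$, cones, and summands --- which makes the proof self-contained and, usefully, forces into the open the one point the paper glosses over: invariance of membership in $\widehat{\X}$ under stable isomorphism. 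On that point you can simplify your own bookkeeping: you do not need the full statement that stably isomorphic objects become isomorphic after adding projective summands. If $\underline{f}\colon N\to M$ is a stable isomorphism with inverse $\underline{g}$, then $1_N-gf$ factors through some $P\in Proj(\C)$, say $1_N-gf=\beta\alpha$, and the maps $\left[\begin{smallmatrix}f\\ \alpha\end{smallmatrix}\right]\colon N\to M\oplus P$ and $\left[\begin{smallmatrix}g&\beta\end{smallmatrix}\right]\colon M\oplus P\to N$ exhibit $N$ as a retract of $M\oplus P\in\widehat{\X}$, so Proposition \ref{cor1} applies directly; the same device handles summands in $\underline{\C}$. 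With that (or with your alternative construction of the cone via $A\to B\oplus I$, which sidesteps the issue entirely), every step is justified by results already proved in the paper, so your route is a legitimate, more explicit substitute for the citation of \cite[Remark 7.5]{NP}.
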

\begin{proof}
If $\C$ is a Frobenius extriangulated category, it is easy to see that $\widehat{\X}$ is also a Frobenius extriangulated category by Proposition \ref{cor2}. By \cite[Remark 7.5]{NP} and  Proposition \ref{cor1}, $\underline{\widehat{\X}}$ is a thick subcategory of $\underline{\C}$.
\end{proof}

\section{\bf Cotorsion Pairs in Extriangulated categories}
We recall the definition of a cotorsion pair in an extriangulated category from \cite{NP}.
\begin{definition} {\rm Let $\mathcal{U}$ and $\mathcal{V}$ be subcategories of an extriangulated category $\C$. The pair $(\mathcal{U}, \mathcal{V})$ is called a cotorsion pair if it satisfies the following conditions.

(1) $\E(\mathcal{U}, \mathcal{V})=0.$

(2) For any $C\in \C$, there exists an $\E$-triangle $$\xymatrix{V_{C}\ar[r]^{x}&U_{C}\ar[r]^{y}&C\ar@{-->}[r]^{\delta}&}$$
satisfying $U_{C}\in\mathcal{U}, V_{C}\in\mathcal{V}$.

(3) For any $C\in \C$, there exists an $\E$-triangle $$\xymatrix{C\ar[r]^{f}&V^{C}\ar[r]^{g}&U^{C}\ar@{-->}[r]^{\theta}&}$$
satisfying $U^{C}\in\mathcal{U}, V^{C}\in\mathcal{V}$.
}
\end{definition}

\begin{rem} \rm{Let $(\mathcal{U}, \mathcal{V})$ be a cotorsion pair on an extriangulated category $\C$. Then

(1) $\mathcal{U}$ is a precovering class in $\C$ and $\mathcal{V}$ is a preenveloping class in $\C$.

(2) $C\in \mathcal{U}$ if and only if $\E(C, \mathcal{V})=0.$

(3) $C\in \mathcal{V}$ if and only if $\E(\mathcal{U}, C)=0.$

(4) $\mathcal{U}$ and $\mathcal{V}$ are closed under extensions.

(5) $Proj(\C)\subseteq\mathcal{U}$ and $Inj(\C)\subseteq\mathcal{V}$.
}

\end{rem}

\begin{lem}\label{lem10}
For a cotorsion pair $(\mathcal{U}, \mathcal{V})$ on $\C$, the following conditions are equivalent.

(1) $\mathcal{U}$ is resolving.

(2) $\mathcal{V}$ is coresolving.

(3) id$_{~\mathcal{U}}(\mathcal{V})=0.$
\end{lem}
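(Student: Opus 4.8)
The plan is to establish the two equivalences $(1)\Leftrightarrow(3)$ and $(2)\Leftrightarrow(3)$ separately, the second being formally dual to the first. The first thing I would record is that the cotorsion pair structure already supplies half of what ``resolving'' and ``coresolving'' demand: by the preceding Remark, $Proj(\C)\subseteq\mathcal{U}$, $Inj(\C)\subseteq\mathcal{V}$, and both $\mathcal{U}$ and $\mathcal{V}$ are closed under extensions. Consequently condition (1) reduces to ``$\mathcal{U}$ is closed under CoCones'' and condition (2) to ``$\mathcal{V}$ is closed under Cones'', while condition (3) unwinds to $\E^{i}(\mathcal{U},\mathcal{V})=0$ for all $i\geq 1$ (the case $i=1$ being automatic from $\E(\mathcal{U},\mathcal{V})=0$). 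I would also invoke repeatedly the characterizations $U\in\mathcal{U}\Leftrightarrow\E(U,\mathcal{V})=0$ and $V\in\mathcal{V}\Leftrightarrow\E(\mathcal{U},V)=0$ from the same Remark.

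For $(1)\Rightarrow(3)$ I would use dimension shifting. Given $U\in\mathcal{U}$, enough projectives furnishes an $\E$-triangle $\Omega U\to P\to U$ with $P\in Proj(\C)\subseteq\mathcal{U}$; since $\mathcal{U}$ is closed under CoCones, $\Omega U\in\mathcal{U}$, and by induction $\Omega^{i}U\in\mathcal{U}$ for every $i\geq 0$. Then the isomorphism $\E^{i+1}(U,V)\cong\E(\Omega^{i}U,V)=0$ holds for all $V\in\mathcal{V}$, which is exactly (3). For the converse $(3)\Rightarrow(1)$ I would take an $\E$-triangle $A\to B\to C$ with $B,C\in\mathcal{U}$; for any $V\in\mathcal{V}$ the second long exact sequence of Lemma \ref{lem4} yields $\E(B,V)\to\E(A,V)\to\E^{2}(C,V)$, where $\E(B,V)=0$ since $B\in\mathcal{U}$ and $\E^{2}(C,V)=0$ by (3), so $\E(A,V)=0$ and hence $A\in\mathcal{U}$. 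This shows $\mathcal{U}$ is closed under CoCones, giving (1).

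The equivalence $(2)\Leftrightarrow(3)$ I would handle by the dual argument: for $(2)\Rightarrow(3)$ I would build cosyzygies $\Sigma^{i}V\in\mathcal{V}$ using enough injectives together with closure of $\mathcal{V}$ under Cones, then apply $\E^{i+1}(U,V)\cong\E(U,\Sigma^{i}V)=0$; for $(3)\Rightarrow(2)$ I would feed an $\E$-triangle $A\to B\to C$ with $A,B\in\mathcal{V}$ into the first long exact sequence of Lemma \ref{lem4}, obtaining $\E(U,B)\to\E(U,C)\to\E^{2}(U,A)$ with both outer terms zero, whence $\E(\mathcal{U},C)=0$ and $C\in\mathcal{V}$. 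The arguments are short once the reductions above are in place; the only point requiring genuine care, and the place where the resolving/coresolving hypotheses are actually consumed, is the inductive step showing that syzygies (respectively cosyzygies) remain inside $\mathcal{U}$ (respectively $\mathcal{V}$) --- this is precisely what allows the dimension-shift isomorphism to collapse every higher extension group down to the already-vanishing $\E^{1}(\mathcal{U},\mathcal{V})$.
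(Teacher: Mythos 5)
Your proof is correct and is exactly the standard dimension-shifting argument; the paper does not write out a proof at all but simply refers to the analogous module-theoretic statement in G\"obel--Trlifaj (Lemma 5.24 of \cite{GT12}), which proceeds in the same way. Your write-up correctly reduces (1) and (2) to closure under CoCones and Cones via Remark 4.2, uses syzygies/cosyzygies staying in $\mathcal{U}$/$\mathcal{V}$ for the forward implications, and uses the long exact sequences of Lemma \ref{lem4} together with the characterizations $U\in\mathcal{U}\Leftrightarrow\E(U,\mathcal{V})=0$ and $V\in\mathcal{V}\Leftrightarrow\E(\mathcal{U},V)=0$ for the converses.
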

\begin{proof}
The proof is similar to that of \cite[Lemma 5.24]{GT12}.
\end{proof}

\begin{rem}{\rm Note that hereditary cotorsion pairs can only be defined on an extriangulated category with enough projectives and injectives.
}
\end{rem}

\begin{prop}\label{prop4}
Let $\X$ be closed under extensions and CoCones such that $\W$ is an $\X$-injective cogenerator for $\X$. Then $(\X, \widehat{\W})$ is a cotorsion pair on the extriangulated category $\widehat{\X}$.
\end{prop}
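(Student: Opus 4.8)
The plan is to verify directly the three defining conditions of a cotorsion pair for the pair $(\X,\widehat{\W})$ inside the extriangulated category $\widehat{\X}$. First I would check that this even makes sense: since $\X=\widehat{\X_0}\subseteq\widehat{\X}$, and $\W\subseteq\X$ forces $\widehat{\W}\subseteq\widehat{\X}$, both $\X$ and $\widehat{\W}$ are genuine subcategories of $\widehat{\X}$. Moreover, by Proposition \ref{prop2} the category $\widehat{\X}$ is itself extriangulated, being extension-closed in $\C$; by the construction of the induced extriangulated structure (\cite[Remark 2.18]{NP}), its bifunctor $\E$ is the restriction of that of $\C$, and its $\E$-triangles are precisely the $\E$-triangles of $\C$ all three of whose terms lie in $\widehat{\X}$. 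This last remark is what lets me read off the triangles produced by earlier results as triangles in $\widehat{\X}$.

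For condition (1), namely $\E(\X,\widehat{\W})=0$, I would combine the $\X$-injectivity of $\W$ with Lemma \ref{lem8}. By hypothesis $\mathrm{id}_{\X}(\W)=0$, and Lemma \ref{lem8} upgrades this to $\mathrm{id}_{\X}(\widehat{\W})=\mathrm{id}_{\X}(\W)=0$. Hence $\E^{i}(X,Y)=0$ for every $X\in\X$, $Y\in\widehat{\W}$ and all $i\geq 1$; taking $i=1$ gives $\E(\X,\widehat{\W})=0$, as required. For conditions (2) and (3) I would simply quote Theorem \ref{thm2}. Given $C\in\widehat{\X}$, say $C\in\widehat{\X_n}$, part (2) of that theorem supplies an $\E$-triangle $Y_C\rightarrow X_C\rightarrow C$ with $X_C\in\X$ and $Y_C\in\widehat{\W_{n-1}}\subseteq\widehat{\W}$; all three terms lie in $\widehat{\X}$, so this is an $\E$-triangle in $\widehat{\X}$ of exactly the required shape, with $U_C=X_C\in\X$ and $V_C=Y_C\in\widehat{\W}$. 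Dually, part (3) of Theorem \ref{thm2} yields an $\E$-triangle $C\rightarrow Y^C\rightarrow X^C$ with $X^C\in\X$ and $Y^C\in\widehat{\W_n}\subseteq\widehat{\W}$, again living in $\widehat{\X}$, which is condition (3).

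Thus the argument is essentially an assembly of Theorem \ref{thm2}, Lemma \ref{lem8} and Proposition \ref{prop2} rather than a fresh computation, and I do not expect a genuine obstacle. The only points requiring care are bookkeeping: confirming that $\X$ and $\widehat{\W}$ really are subcategories of the ambient $\widehat{\X}$ in which the cotorsion pair is claimed to live, that the triangles delivered by Theorem \ref{thm2} are $\E$-triangles of $\widehat{\X}$ (which is exactly where the extension-closedness from Proposition \ref{prop2} is used), and that the $\mathrm{Ext}$-vanishing in condition (1) is extracted at the correct degree $i=1$ from the statement $\mathrm{id}_{\X}(\widehat{\W})=0$ furnished by Lemma \ref{lem8}.
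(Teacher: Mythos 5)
Your argument is essentially the paper's own proof, which simply cites Theorem \ref{thm2} and Propositions \ref{prop1}, \ref{prop2} and \ref{cor1}: conditions (2) and (3) of the cotorsion-pair definition come from Theorem \ref{thm2}, condition (1) from $\mathrm{id}_{\X}(\widehat{\W})=\mathrm{id}_{\X}(\W)=0$ (your Lemma \ref{lem8}, which is exactly Proposition \ref{prop1}(1)), and the extriangulated structure on $\widehat{\X}$ from Proposition \ref{prop2}. The only item you omit is the appeal to Proposition \ref{cor1} together with Proposition \ref{prop1}(3), which ensures that $\widehat{\X}$ and $\widehat{\W}=\widehat{\X}\cap\X^{\perp}$ are closed under direct summands --- a point needed because the paper's standing convention requires subcategories to be closed under direct summands, and this is not obvious from the definition of $\widehat{\W}$ alone.
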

\begin{proof}
 This comes from Theorem \ref{thm2}, Proposition \ref{prop1}, \ref{prop2} and \ref{cor1}.
\end{proof}

\begin{definition}
Let $\HH$ be a subcategory of $\C$. $\HH$ is called specially precovering in $\C$ provided that for any $C\in \C$, there is an $\E$-triangle  $$\xymatrix{K\ar[r]^{}&H\ar[r]^{}&C\ar@{-->}[r]^{}&}$$ with $H\in\HH, \E(\HH, K)=0$.
\end{definition}

\begin{lem}\label{lem11}
Let $\HH$ be a subcategory of $\C$. Suppose that $\HH$ is resolving and specially precovering. Then $\HH\bigcap\HH^{\bot}$ is an $\HH$-injective cogenerator for $\HH$.
\end{lem}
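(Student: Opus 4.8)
The plan is to verify, for $\W:=\HH\cap\HH^{\perp}$, the three requirements in the definition of an $\HH$-injective cogenerator for $\HH$: that $\W\subseteq\HH$, that $\W$ is $\HH$-injective, and that $\W$ is a cogenerator for $\HH$. The first two are immediate. The inclusion $\W\subseteq\HH$ is built into the definition of $\W$, and since $\W\subseteq\HH^{\perp}$ we have $\E^{i}(H,W)=0$ for all $i\geq 1$, $H\in\HH$ and $W\in\W$, which is exactly $\mathrm{id}_{\HH}(\W)=0$. So the entire content of the statement lies in the cogenerator property, i.e.\ in producing, for each $H\in\HH$, an $\E$-triangle $H\to W\to H'$ with $W\in\W$ and $H'\in\HH$.

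First I would record a strengthening of the special precovering hypothesis. Since $\HH$ is resolving it contains $Proj(\C)$ and is closed under CoCones, so each syzygy triangle $\Omega H\to P\to H$ with $P\in Proj(\C)$ shows $\Omega H\in\HH$; inductively $\Omega^{i}\HH\subseteq\HH$ for every $i\geq 0$. Hence if $K\to H_{0}\to C$ is a special precover triangle, so that $H_{0}\in\HH$ and $\E(\HH,K)=0$, then for each $i\geq 1$ the isomorphism $\E^{i}(H',K)\cong\E(\Omega^{i-1}H',K)$ together with $\Omega^{i-1}H'\in\HH$ forces $\E^{i}(\HH,K)=0$; that is, $K\in\HH^{\perp}$. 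Thus a special precover of a resolving $\HH$ automatically has its ``kernel'' in all of $\HH^{\perp}$, not merely in the first orthogonal.

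The main construction then runs as follows. Fix $H\in\HH$. Using that $\C$ has enough injectives, choose a cosyzygy triangle $H\to I\to\Sigma H$ realizing some $\delta_{2}$ with $I\in Inj(\C)$, and using special precovering choose $K\to H_{0}\to\Sigma H$ realizing some $\delta_{1}$ with $H_{0}\in\HH$ and, by the previous paragraph, $K\in\HH^{\perp}$. These are two $\E$-triangles with common third term $\Sigma H$, so Lemma \ref{lem1}(1) produces an object $M$ fitting into a commutative diagram whose middle column is an $\E$-triangle $H\to M\to H_{0}$ and whose middle row is an $\E$-triangle $K\to M\to I$. Now $M\in\HH$ because $\HH$ is closed under extensions and $H,H_{0}\in\HH$; and $M\in\HH^{\perp}$ because $K\in\HH^{\perp}$, $I\in\HH^{\perp}$ (any injective kills $\E^{i}(-,I)$ for all $i\geq 1$), and $\HH^{\perp}$ is closed under extensions by the long exact sequence of Lemma \ref{lem4} applied to $K\to M\to I$. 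Therefore $M\in\W$, and the middle column $H\to M\to H_{0}$ is precisely the required cogenerator triangle.

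The step I expect to be the main obstacle is exactly this construction: the special precovering hypothesis only supplies $\E$-triangles \emph{ending} at a prescribed object, whereas the cogenerator property demands an $\E$-triangle \emph{starting} at $H$. The device resolving this is to pass to the cosyzygy $\Sigma H$ and compare its injective cosyzygy triangle with a special precover of $\Sigma H$ through Lemma \ref{lem1}(1); the middle object $M$ then inherits membership in $\HH$ from extension-closure of $\HH$ and membership in $\HH^{\perp}$ from extension-closure of $\HH^{\perp}$, the latter relying on both the injectivity of $I$ and the strengthened vanishing $K\in\HH^{\perp}$ established above.
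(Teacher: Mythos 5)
Your proof is correct and follows essentially the same route as the paper's: form the cosyzygy triangle $H\to I\to\Sigma H$, take a special precover of $\Sigma H$, and apply Lemma \ref{lem1}(1) to produce the middle object $M$, which lies in $\HH$ by extension-closure and in $\HH^{\perp}$ via the syzygy argument exploiting that $\HH$ is resolving. The only cosmetic difference is that you upgrade $K$ to $\HH^{\perp}$ first and then invoke extension-closure of $\HH^{\perp}$, whereas the paper first deduces $\E(\HH,M)=0$ from the row $K\to M\to I$ and then runs the same syzygy argument directly on $M$.
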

\begin{proof}
Let $H\in\HH$. There is an $\E$-triangle  $$\xymatrix{H\ar[r]^{}&I\ar[r]^{}&X\ar@{-->}[r]^{}&}$$ with $I\in Inj(\C)$. Since $\HH$ is specially precovering, we have an $\E$-triangle $$\xymatrix{K\ar[r]^{}&H'\ar[r]^{}&X\ar@{-->}[r]^{}&}$$ with $H'\in\HH$ and $\E(\HH, K)=0$. By Lemma \ref{lem1}, we have the following commutative diagram
$$\xymatrix{
    & H\ar[d]_{} \ar@{=}[r] & H \ar[d]^{} \\
  K \ar@{=}[d] \ar[r]^{} & M \ar[d]_{} \ar[r]^{} & I\ar[d]^{} \\
  K \ar[r]^{} & H'\ar[r]^{} & X.   }
  $$
Since $\HH$ is closed under extensions, it follows that $M\in \HH$. Note $\E(\HH, I)=\E(\HH, K)=0$. So $\E(\HH, M)=0$. We claim $M\in \HH\bigcap\HH^{\bot}$. Indeed, for any positive integer $n$ and $H'\in\HH$, we have the following $\E$-triangle sequence
$$L\rightarrow P_{n-1}\rightarrow\cdots\rightarrow P_{1}\rightarrow P_{0}\rightarrow H''$$ with $P_{i}\in Proj(\C)$ for $0\leq i\leq n-1$. We have $\E^{n}(H'', M)\cong\E(L, M)=0$ as $\HH$ is resolving and $\E(\HH, M)=0$. Hence $M\in \HH\bigcap\HH^{\bot}$. The second column in the above diagram implies that $\HH\bigcap\HH^{\bot}$ is an $\HH$-injective cogenerator for $\HH$.
\end{proof}
%\begin{thm}\label{thm4}
%Let $\C$ be a Frobenius extriangulated category. The assignments
%\begin{center}
%$(\X, \W)\mapsto (\X, \widehat{\W})$ $~~$ and $~~$ $(\mathcal{U},\mathcal{V})\mapsto ( \mathcal{U}, \mathcal{U}\bigcap\mathcal{V})$
%\end{center}
 %give mutually inverse bijections between the following classes:

%(1) Pairs $(\X, \W)$ of subcategories in $\C$ such that $\X$ is a resolving subcategory and $\W$ is an $\X$-injective cogenerator for $\X$.

%(2) Hereditary cotorsion pair $(\mathcal{U}, \mathcal{V})$ on the extriangulated category $\widehat{\mathcal{U}}$ with Proj$(\mathcal{C})\subseteq\widehat{\mathcal{U}}$.
%\end{thm}
%\begin{proof}
%By Proposition \ref{prop4} and Lemma \ref{lem10}, $(\X, \widehat{\W})$ is a hereditary cotorsion pair on extriangulated category $\widehat{\X}$.

%If $(\mathcal{U}, \mathcal{V})$ is a hereditary cotorsion pair on $\widehat{\mathcal{U}}$ with Proj$(\C)\subseteq\widehat{\mathcal{U}}$, then $\mathcal{U}$ is resolving and $\mathcal{U}\bigcap\mathcal{V}$ is $\mathcal{U}$-injective by the definition of cotorsion pair and Lemma \ref{lem10}. For any $U\in \mathcal{U}$, there exists an $\E$-triangle $$\xymatrix{U\ar[r]^{}&V\ar[r]^{}&U'\ar@{-->}[r]^{}&}$$ with $V\in \mathcal{V}$ and $U'\in \mathcal{U}$. Since $\mathcal{U}$ is closed under extensions, it follows that $V\in \mathcal{U}\bigcap\mathcal{V}$.  Hence $\mathcal{U}\bigcap\mathcal{V}$ is a $\mathcal{U}$-injective cogenerator for $\mathcal{U}$.
%\end{proof}

\begin{thm}\label{prop5}
Let $\C$ be a Frobenius extriangulated category.
The assignments
\begin{center}
$(\mathcal{U}, \mathcal{V})\mapsto\mathcal{U}$ $~~$ and $~~$ $\mathcal{H}\mapsto(\mathcal{H}, \widehat{\mathcal{M}})$,
\end{center}
 where $\mathcal{M}=\mathcal{H}\bigcap\mathcal{H}^{\bot}$, give mutually inverse bijections between the following classes:

(1) Hereditary cotorsion pairs $(\mathcal{U}, \mathcal{V})$ on $\C$ with $\widehat{\mathcal{U}}=\C$ and $\widecheck{\mathcal{V}}=\C$.

(2) Subcategories $\mathcal{H}$ of $\C$, which is specially precovering and resolving in $\C$ such that $\widehat{\mathcal{H}}=\C$ and for any $H\in \mathcal{H}$, there exists a postive integer $i\geq 1$ making $\E^{i}(H', H)=0$ for any $H'\in \mathcal{H}$.

\end{thm}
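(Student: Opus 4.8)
The plan is to verify that each assignment lands in the stated class and that the two composites are identities. The linchpin throughout is Proposition \ref{prop1}(3): once a subcategory $\mathcal{S}$ is resolving and specially precovering with $\widehat{\mathcal{S}}=\C$, Lemma \ref{lem11} makes $\mathcal{S}\cap\mathcal{S}^{\perp}$ an $\mathcal{S}$-injective cogenerator for $\mathcal{S}$, so Proposition \ref{prop1}(3) yields $\widehat{\mathcal{S}\cap\mathcal{S}^{\perp}}=\widehat{\mathcal{S}}\cap\mathcal{S}^{\perp}=\mathcal{S}^{\perp}$. This identity drives both the well-definedness and the inverse computation.

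First I would check that $(\mathcal{U},\mathcal{V})\mapsto\mathcal{U}$ lands in class (2). Being hereditary, $\mathcal{U}$ is resolving by Lemma \ref{lem10}; the defining $\E$-triangle $V_{C}\to U_{C}\to C$ of the cotorsion pair has $\E(\mathcal{U},V_{C})=0$, so $\mathcal{U}$ is specially precovering; and $\widehat{\mathcal{U}}=\C$ is given. For the remaining condition, fix $H\in\mathcal{U}$; since $\widecheck{\mathcal{V}}=\C$ we have $\mathrm{coresdim}_{\mathcal{V}}(H)=m<\infty$, and hereditariness gives $\mathrm{id}_{\mathcal{U}}(\mathcal{V})=0$ (Lemma \ref{lem10}), so iterating Lemma \ref{lem5}(2) along a finite $\mathcal{V}$-coresolution of $H$ yields $\mathrm{id}_{\mathcal{U}}(H)\le m$; hence $\E^{m+1}(H',H)=0$ for all $H'\in\mathcal{U}$, as required.

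Next I would treat $\mathcal{H}\mapsto(\mathcal{H},\widehat{\mathcal{M}})$ with $\mathcal{M}=\mathcal{H}\cap\mathcal{H}^{\perp}$. By Lemma \ref{lem11}, $\mathcal{M}$ is an $\mathcal{H}$-injective cogenerator for the resolving (hence extension- and CoCone-closed) subcategory $\mathcal{H}$, so Proposition \ref{prop4} makes $(\mathcal{H},\widehat{\mathcal{M}})$ a cotorsion pair on $\widehat{\mathcal{H}}=\C$, and it is hereditary since $\mathcal{H}$ is resolving (Lemma \ref{lem10}). It remains to verify $\widecheck{\widehat{\mathcal{M}}}=\C$, which I expect to be the main obstacle. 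Using $\widehat{\mathcal{M}}=\mathcal{H}^{\perp}$ from the linchpin identity, it suffices to show $\mathrm{id}_{\mathcal{H}}(C)<\infty$ for every $C\in\C$: then an injective coresolution of $C$ (available since $\C$ is Frobenius) has an $m$-th cosyzygy lying in $\mathcal{H}^{\perp}=\widehat{\mathcal{M}}$, while each injective term lies in $Inj(\C)=Proj(\C)\subseteq\mathcal{H}$, hence in $\mathcal{M}\subseteq\widehat{\mathcal{M}}$, giving a finite $\widehat{\mathcal{M}}$-coresolution of $C$. To bound $\mathrm{id}_{\mathcal{H}}(C)$, note that $C\in\widehat{\mathcal{H}}$ admits a finite $\mathcal{H}$-resolution $H_{d}\to\cdots\to H_{0}\to C$; breaking it into the $\E$-triangles $K_{j+1}\to H_{j}\to K_{j}$ and iterating Lemma \ref{lem5}(3) gives $\mathrm{id}_{\mathcal{H}}(C)\le\max_{0\le j\le d}\{\mathrm{id}_{\mathcal{H}}(H_{j})-j\}$, which is finite because the class (2) condition forces $\mathrm{id}_{\mathcal{H}}(H_{j})<\infty$ for every $j$.

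Finally I would check the composites. The round trip $\mathcal{H}\mapsto(\mathcal{H},\widehat{\mathcal{M}})\mapsto\mathcal{H}$ is immediate, since the first coordinate is returned. For $(\mathcal{U},\mathcal{V})\mapsto\mathcal{U}\mapsto(\mathcal{U},\widehat{\mathcal{M}'})$ with $\mathcal{M}'=\mathcal{U}\cap\mathcal{U}^{\perp}$, the linchpin gives $\widehat{\mathcal{M}'}=\mathcal{U}^{\perp}$; hereditariness gives $\mathcal{V}\subseteq\mathcal{U}^{\perp}$, while any $Y\in\mathcal{U}^{\perp}$ satisfies $\E(\mathcal{U},Y)=0$ and hence $Y\in\mathcal{V}$ by the cotorsion-pair characterization of $\mathcal{V}$. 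Thus $\widehat{\mathcal{M}'}=\mathcal{V}$, and the two assignments are mutually inverse bijections. The only genuinely delicate point is the finiteness $\widecheck{\widehat{\mathcal{M}}}=\C$ in the third paragraph, where the pointwise finiteness of $\mathrm{id}_{\mathcal{H}}$ on $\mathcal{H}$ must be propagated to all of $\C$ through the finite $\mathcal{H}$-resolution.
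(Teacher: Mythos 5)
Your proposal is correct and follows essentially the same route as the paper: Lemma \ref{lem10} for resolvingness, Lemma \ref{lem11} to produce the cogenerator $\mathcal{M}=\mathcal{H}\cap\mathcal{H}^{\perp}$, Proposition \ref{prop1}(3) for the identity $\widehat{\mathcal{M}}=\mathcal{H}^{\perp}$, Proposition \ref{prop4} for the cotorsion pair, and a cosyzygy-plus-injective-coresolution argument for $\widecheck{\widehat{\mathcal{M}}}=\C$. The one place you deviate is in that last step: the paper first takes the preenvelope $\E$-triangle $X\to Y\to L$ with $Y\in\widehat{\mathcal{M}}$ and $L\in\mathcal{H}$ from Theorem \ref{thm2}, and applies the cosyzygy argument only to $L\in\mathcal{H}$ (where the class (2) hypothesis directly supplies the vanishing degree), whereas you bound $\mathrm{id}_{\mathcal{H}}(C)$ for arbitrary $C\in\C$ by iterating Lemma \ref{lem5}(3) along a finite $\mathcal{H}$-resolution; both work, and yours trades the use of Theorem \ref{thm2} for an extra dimension-shifting step. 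One point you (like the paper) leave implicit but should record: the hypothesis only gives vanishing of $\E^{i}(-,H)|_{\mathcal{H}}$ in a \emph{single} degree $i$, and upgrading this to $\mathrm{id}_{\mathcal{H}}(H)\le i-1$ (equivalently, to $\Sigma^{i-1}H\in\mathcal{H}^{\perp}$) uses that $\mathcal{H}$ is closed under syzygies, via $\E^{j+i}(H',H)\cong\E^{i}(\Omega^{j}H',H)$ with $\Omega^{j}H'\in\mathcal{H}$ because $\mathcal{H}$ is resolving. You are also more explicit than the paper about checking that the two composites are identities, which is a welcome addition.
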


\begin{proof}
Let $(\mathcal{U}, \mathcal{V})$ be a hereditary cotorsion pair on $\C$ with $\widehat{\mathcal{U}}=\C$ and $\widecheck{\mathcal{V}}=\C$. Then $\mathcal{U}$ is precovering and resolving in $\C$ by Remark 4.2 and Lemma \ref{lem10}.

For any $U\in \mathcal{U}, U\in \widecheck{\mathcal{V}}$ by assumption. Hence there exists an $\E$-triangle sequence
$$U\rightarrow V_{0}\rightarrow V_{1}\rightarrow\cdots\rightarrow V_{n}$$ with $V_{i}\in \mathcal{V}$ for $0\leq i\leq n$. We have $\E^{n+1}(U', U)\cong\E(U', V_{n})=0$ for any $U'\in \mathcal{U}$ . Clearly, $n+1$ is the desired $i$.

 Assume now that $\mathcal{H}$ is a subcategory of $\C$ satisfying the conditions in (2). By Lemma \ref{lem11}, $\mathcal{M}=\HH\bigcap\HH^{\bot}$ is an $\mathcal{H}$-injective cogenerator for $\mathcal{H}$. By Proposition \ref{prop1}, $\widehat{\mathcal{M}}=\widehat{\mathcal{H}}\bigcap\mathcal{H}^{\bot}=\mathcal{H}^{\bot}$, where the second equality is due to $\widehat{\mathcal{H}}=\C$. Therefore $(\mathcal{H}, \widehat{\mathcal{M}})$ is a hereditary cotorsion pair on $\C$ by Proposition \ref{prop4}. For any $X\in \C$, we have an $\E$-triangle
$$\xymatrix{X\ar[r]^{}&Y\ar[r]^{}&L\ar@{-->}[r]^{}&}$$ with $Y\in\widehat{\mathcal{M}}$ and $L\in \mathcal{H}$ by Theorem \ref{thm2}. We claim that $L\in \widetilde{\mathcal{M}}:=\widecheck{\hat{\mathcal{M}}}$.  Indeed, note that there exists a postive integer $i$ such that $\E^{i}(H, L)=0$ for any $H\in \mathcal{H}$. If $i=1$, then $L\in \widehat{\mathcal{M}}$, as desired. Assume $i>1$, then $\E^{i}(H, L)=0$ for any $H\in \mathcal{H}$ implies that $\Sigma^{i-1}(L)\in \mathcal{H}^{\bot}$. Meanwhile $\Sigma^{i-1}(L)\in \widehat{\mathcal{H}}$ as $\mathcal{H}$ contains $Inj(\C)$. Hence $\Sigma^{i-1}(L)\in \widehat{\mathcal{H}}\bigcap\mathcal{H}^{\bot}=\widehat{\mathcal{M}}$ by Proposition \ref{prop1}. Since Inj$(\C)\subseteq\widehat{\mathcal{M}}$, it follows that $L\in \widetilde{\mathcal{M}}$. Hence $\C=\widetilde{\mathcal{M}}$. This completes the proof.
\end{proof}

We conclude the paper by the following result, which gives a characterization of silting subcategories on stable categories and also generalizes \cite[Corollary 3.7]{DLWW}. For the convenience of the reader, we recall the definition of silting subcategories.

\begin{definition}\cite[Definition 2.1]{AI} {\rm Let $\mathcal{M}$ be a subcategory of a triangulated category $\mathcal{T}$. $\mathcal{M}$ is called silting if ${\rm{Hom}}_{\mathcal{T}}(\mathcal{M}, \mathcal{M}[\geq1])=0$ and $\mathcal{T}={\rm thick}(\mathcal{M})$, where $\mathcal{T}={\rm thick}(\mathcal{M})$ is the smallest triangulated subcategory of $\mathcal{T}$ which contains $\mathcal{M}$ and is closed under direct summands and isomorphisms}.

\end{definition}
\begin{cor}\label{corl}
Let $\C$ be a Frobenius extriangulated category. The assignments
\begin{center}
 $\underline{\mathcal{M}}\mapsto$$^{\bot}\mathcal{M}$~$~~$ and $~~$~$\mathcal{H}\mapsto\underline{\mathcal{H}\bigcap\mathcal{H}^{\bot}}$
\end{center}
  give mutually inverse bijections between the following classes: %For a subcategory $\mathcal{H}$ of $\C$ with Proj$(\C)\subseteq\mathcal{H}$. The following are equivalent.

(1) Silting subcategories $\underline{\mathcal{M}}$ of the stable category $\underline{\C}$.

(2) Subcategories $\mathcal{H}$ of $\C$, which is specially precovering and resolving in $\C$ such that $\widehat{\mathcal{H}}=\C$ and for any $H\in \mathcal{H}$, there exists a postive integer $i\geq 1$ making $\E^{i}(H', H)=0$ for any $H'\in \mathcal{H}$.
\end{cor}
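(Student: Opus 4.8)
The plan is to establish Corollary~\ref{corl} by transporting the bijection of Theorem~\ref{prop5} through the stabilization functor $\C \to \underline{\C}$. Since $\C$ is Frobenius, $Proj(\C)=Inj(\C)$, so the stable category $\underline{\C}$ is triangulated with shift functor $\Sigma$, and the key observation is that the datum of a silting subcategory $\underline{\mathcal{M}}$ of $\underline{\C}$ should correspond exactly to the datum of a hereditary cotorsion pair of the type appearing in Theorem~\ref{prop5}. Concretely, I would first show that the two classes described in Theorem~\ref{prop5}(1) and Corollary~\ref{corl}(1) are in natural bijection, and then compose with the already-established correspondence between (1) and (2) of Theorem~\ref{prop5}, noting that condition (2) of the corollary is \emph{identical} to condition (2) of the theorem. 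Thus the real content is the matching between silting subcategories of $\underline{\C}$ and hereditary cotorsion pairs $(\mathcal{U},\mathcal{V})$ on $\C$ with $\widehat{\mathcal{U}}=\widecheck{\mathcal{V}}=\C$.

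For the forward direction, given such a hereditary cotorsion pair $(\mathcal{U},\mathcal{V})$, I would set $\mathcal{M}=\mathcal{U}\cap\mathcal{V}$ and argue that $\underline{\mathcal{M}}$ is silting in $\underline{\C}$. The orthogonality $\E(\mathcal{U},\mathcal{V})=0$ together with the hereditary hypothesis gives $\E^{i}(\mathcal{M},\mathcal{M})=0$ for all $i\geq 1$, which via the higher-extension identification $\E^{i+1}(X,Y)\cong\E(X,\Sigma^{i}Y)$ translates precisely into $\underline{\C}(\underline{\mathcal{M}},\Sigma^{\geq 1}\underline{\mathcal{M}})=0$, i.e. the first silting axiom. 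For the thickness condition $\underline{\C}=\mathrm{thick}(\underline{\mathcal{M}})$, I would use $\widehat{\mathcal{U}}=\widecheck{\mathcal{V}}=\C$: every object of $\C$ has a finite resolution by objects of $\mathcal{U}$ and a finite coresolution by objects of $\mathcal{V}$, and combining these (together with $Proj(\C)=Inj(\C)\subseteq\mathcal{M}$ after stabilization) should show that the whole stable category is generated from $\underline{\mathcal{M}}$ under cones, cocones, and summands. For the reverse direction, starting from a silting $\underline{\mathcal{M}}$, I would recover $\mathcal{H}={}^{\perp}\mathcal{M}$ (the preimage of the appropriate orthogonal) and verify it satisfies the conditions of Theorem~\ref{prop5}(2), so that feeding it back through the theorem reproduces the cotorsion pair; here Proposition~\ref{prop1} and Lemma~\ref{lem11} supply that $\mathcal{M}=\mathcal{H}\cap\mathcal{H}^{\perp}$ is the $\mathcal{H}$-injective cogenerator.

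The two assignments in the corollary are $\underline{\mathcal{M}}\mapsto{}^{\perp}\mathcal{M}$ and $\mathcal{H}\mapsto\underline{\mathcal{H}\cap\mathcal{H}^{\perp}}$, so I would verify they are mutually inverse by checking both composites are the identity. The composite $\mathcal{H}\mapsto\underline{\mathcal{H}\cap\mathcal{H}^{\perp}}\mapsto{}^{\perp}(\mathcal{H}\cap\mathcal{H}^{\perp})$ should return $\mathcal{H}$ using that $\mathcal{H}={}^{\perp}(\mathcal{H}\cap\mathcal{H}^{\perp})$ for a resolving specially precovering class; this last identity is essentially the statement that $\mathcal{H}$ equals the left orthogonal of its own cogenerator, which I expect follows from Theorem~\ref{thm3} and the characterization of $\widehat{\mathcal{M}}=\mathcal{H}^{\perp}$ established in the proof of Theorem~\ref{prop5}. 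The other composite is handled symmetrically.

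The main obstacle I anticipate is the careful bookkeeping across the stabilization functor: one must check that the stable orthogonality $\underline{\C}(\underline{\mathcal{M}},\Sigma^{\geq 1}\underline{\mathcal{M}})=0$ is genuinely equivalent to the $\E^{\geq 1}$-orthogonality in $\C$ (this uses that projective-injectives die in $\underline{\C}$ and the shift $\Sigma$ coincides with the cosyzygy functor), and above all that the thickness condition $\mathrm{thick}(\underline{\mathcal{M}})=\underline{\C}$ corresponds exactly to the pair of finiteness conditions $\widehat{\mathcal{U}}=\widecheck{\mathcal{V}}=\C$ rather than to something weaker or stronger. Establishing this equivalence rigorously — that closure under cones, cocones and summands in $\underline{\C}$ is detected by finite $\mathcal{U}$-resolutions and finite $\mathcal{V}$-coresolutions in $\C$ — is the technical heart, and I would lean on Corollary~\ref{cor3} (which already identifies $\underline{\widehat{\X}}$ as a thick subcategory of $\underline{\C}$) to control the thick-closure side of the argument.
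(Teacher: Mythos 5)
Your overall reduction --- identify class (2) of the corollary with class (2) of Theorem \ref{prop5}, then match silting subcategories of $\underline{\C}$ with hereditary cotorsion pairs $(\mathcal{U},\mathcal{V})$ on $\C$ satisfying $\widehat{\mathcal{U}}=\widecheck{\mathcal{V}}=\C$ --- is exactly the paper's strategy. The difference is in how that matching is obtained: the paper does not prove it directly but assembles it from citations (\cite[Corollary 3.10]{CZZ} to pass cotorsion pairs between $\C$ and $\underline{\C}$, \cite[Remark 4.2]{MSSS2} to identify hereditary cotorsion pairs with $\widehat{\mathcal{U}}=\widecheck{\mathcal{V}}=\C$ with bounded co-t-structures on $\underline{\C}$, and \cite[Corollary 5.9]{MSSS2} for the bijection between bounded co-t-structures and silting subcategories), whereas you propose to prove the silting correspondence from scratch. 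Your forward direction is workable: the translation $\E^{i}(X,M)\cong\underline{\C}(X,\Sigma^{i}M)$ is correct, and $\mathrm{thick}(\underline{\mathcal{M}})=\underline{\C}$ can be extracted from $\widehat{\mathcal{U}}=\widecheck{\mathcal{V}}=\C$ by the usual interleaving (for $U\in\mathcal{U}$ the $\E$-triangle $U\to V^{0}\to U^{0}$ has middle term in $\mathcal{U}\cap\mathcal{V}=\mathcal{M}$, and one inducts on $\mathrm{coresdim}_{\mathcal{V}}$ and then on $\mathrm{resdim}_{\mathcal{U}}$).

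The genuine gap is the reverse direction. Starting from a silting subcategory $\underline{\mathcal{M}}$ you must show that $\mathcal{H}={}^{\perp}\mathcal{M}$ is specially precovering and resolving with $\widehat{\mathcal{H}}=\C$; only then can Theorem \ref{prop5} be applied. None of this is formal: it is precisely the ``silting generates a bounded co-t-structure'' direction, which in the triangulated setting requires a d\'evissage over the filtration of $\mathrm{thick}(\underline{\mathcal{M}})$ by iterated extensions, shifts and summands of $\underline{\mathcal{M}}$ in order to manufacture the approximation triangles $K\to H\to C$ with $\E(\mathcal{H},K)=0$ and the finite $\mathcal{H}$-resolutions. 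Your proposal offers no such construction, and the tools you cite for this step (Lemma \ref{lem11}, Proposition \ref{prop1}, Theorem \ref{thm3}) all \emph{presuppose} that $\mathcal{H}$ is already specially precovering and resolving (or that $\mathcal{M}$ is already a cogenerator for it), so invoking them here is circular. Likewise, Corollary \ref{cor3} only says that $\underline{\widehat{\mathcal{X}}}$ is thick once the hypotheses on $(\mathcal{X},\mathcal{W})$ hold; it does not provide the converse passage from $\mathrm{thick}(\underline{\mathcal{M}})=\underline{\C}$ to $\widehat{{}^{\perp}\mathcal{M}}=\C$ that your plan needs. Either supply this d\'evissage explicitly or, as the paper does, quote the established triangulated-category results and transport them along the correspondence of cotorsion pairs between $\C$ and $\underline{\C}$.
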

\begin{proof}
Since $\C$ is a Frobenius extriangulated category, it follows that $\underline{\C}$ is a triangulated category by \cite[Corollary 7.4]{NP}.  By \cite[Corollary 3.10]{CZZ}, $(\mathcal{U}, \mathcal{V})$ is a cotorsion pair on $\C$ if and only if $(\mathcal{\underline{U}}, \mathcal{\underline{V}})$ is a cotorsion pair on $\underline{\C}.$
Note that any distinguished triangle $$\xymatrix{A\ar[r]^{\underline{f}} & B \ar[r]^{\underline{g}} & C\ar[r]^{\underline{h}} & A\langle1\rangle}$$ in $\underline{\C}$ is induced by an $\E$-triangle
$$\xymatrix{A\ar[r]^{f}&B\ar[r]^{g}&C\ar@{-->}[r]^{\delta}&}.$$ By \cite[Remark 4.2]{MSSS2}, it is easy to see that $(\mathcal{U}, \mathcal{V})$ is a hereditary cotorsion pair on $\C$ with $\widehat{\mathcal{U}}=\widecheck{\mathcal{V}}=\C$ if and only if $(\underline{\mathcal{U}}, \underline{\mathcal{V}})$ is a bounded co-t-structure on $\underline{\C}$. Hence the Corollary follows from \cite[Corollary 5.9]{MSSS2} and Theorem \ref{prop5}.
\end{proof}

\renewcommand\refname{\bf References}

\end{document}